%% LyX 1.6.5 created this file.  For more info, see http://www.lyx.org/.
%% Do not edit unless you really know what you are doing.
\documentclass[english,reqno]{amsart}
\usepackage[T1]{fontenc}
\usepackage[latin9]{inputenc}
\usepackage{amsthm}
\usepackage{amstext}
\usepackage{amssymb}

\makeatletter
%%%%%%%%%%%%%%%%%%%%%%%%%%%%%% Textclass specific LaTeX commands.
\numberwithin{equation}{section}
\numberwithin{figure}{section}
\theoremstyle{plain}
\newtheorem{thm}{Theorem}[section]
  \theoremstyle{definition}
  \newtheorem{defn}[thm]{Definition}
  \theoremstyle{remark}
  \newtheorem{rem}[thm]{Remark}
  \theoremstyle{plain}
  \newtheorem{prop}[thm]{Proposition}
  \theoremstyle{plain}
  \newtheorem{lem}[thm]{Lemma}
  \theoremstyle{plain}
  \newtheorem{cor}[thm]{Corollary}
  \theoremstyle{plain}
  \newtheorem{fact}[thm]{Fact}

%%%%%%%%%%%%%%%%%%%%%%%%%%%%%% User specified LaTeX commands.

\usepackage{amscd}

\theoremstyle{plain}

\makeatother

\usepackage{babel}

\begin{document}

\title[Interpolation of cocompact imbeddings]{On interpolation of cocompact imbeddings}

\author{Michael Cwikel}

\address{Department of Mathematics, Technion - Israel Institute of Technology,
Haifa 32000, Israel}

\email{mcwikel@math.technion.ac.il}

\urladdr{http://www.math.technion.ac.il/\textasciitilde{}mcwikel/}

\author{Kyril Tintarev}

\address{Department of Mathematics, Uppsala University, P.O.Box 480, 75 106
Uppsala, Sweden }

\email{tintarev@math.uu.se}

\urladdr{http://tintarev.comxa.com}

\thanks{The research was supported by the Technion V.P.R.\ Fund and by the
Fund for Promotion of Research at the Technion. Parts of this research
have been conducted during visits of the authors to each other's home
institutions.}

\subjclass[2000]{Primary 46B70, 46E35, 46B50, Secondary 30H25, 46N20, 49J45.}

\keywords{Besov spaces, cocompact imbeddings, concentration compactness, fractional
Sobolev spaces, interpolation spaces, minimizers, mollifiers, Sobolev
imbeddings}
\begin{abstract}
Cocompactness is a useful weaker counterpart of compactness in the
study of imbeddings between function spaces. In this paper we prove
that, under quite general conditions, cocompactness of imbeddings
of Banach spaces persists under both real and complex interpolation.
As an application, we obtain that subcritical continuous imbeddings
of fractional Sobolev spaces and Besov spaces are cocompact relative
to lattice shifts. We deduce this by interpolating the known cocompact
imbeddings for classical Sobolev spaces (``vanishing''
lemmas of Lieb and Lions). We also apply cocompactness to prove compactness
of imbeddings of some radial subspaces and to show the existence of
minimizers in some isoperimetric problems. Our research complements
a range of previous results, and recalls that there is a natural conceptual
framework for unifying them. 
\end{abstract}
\maketitle
\smallskip{}

%\smallskip % 46B70=interpolation between normed linear spaces
% 46E30=spaces of measurable functions (Lp spaces, Orlicz,

\section{\label{intro}Introduction.}

The notion of \emph{cocompact imbedding} is a convenient way to
express a property of imbeddings, related to (and weaker than) compactness.
Several authors, including Elliott Lieb, Pierre-Louis Lions and Terence
Tao, have proved and used cocompactness of imbeddings of Sobolev and
Strichartz spaces into $L^{p}$-spaces, without explicitly using this
terminology. Although these and other results have long been perceived
to be related to each other in the heuristic sense of \emph{concentration
compactness}, a formal and unified interpretation of them in functional
analytic terms appeared only relatively recently.

Our starting point is the classical definition of a cocompact manifold:
a manifold $M$ is called cocompact relative to a given group $G$
of automorphisms $\eta:M\to M$ if there exists a compact subset $K\Subset M$
such that for each $x\in M$ there exists some $\eta\in G$ such that
$\eta x\in K$. In particular this implies that, for any sequence
$\left\{ x_{k}\right\} _{k\in\mathbb{N}}$ in $M$, there exists a
sequence $\left\{ \eta_{k}\right\} _{k\in\mathbb{N}}$ in $G$ such
that $\left\{ \eta_{k}x_{k}\right\} _{k\in\mathbb{N}}$ has a convergent
subsequence in $M$.

When we seek to introduce a related notion in the context of Banach
spaces, it turns out to be natural to do this in terms of the following
modified version of weak convergence, defined relative to some group
of continuous linear bijections. 
\begin{defn}
\label{def:dwc}Let $A$ be a Banach space, and let $\mathcal{D}$
be a group of continuous linear bijections of $A$. A sequence $\left\{ u_{k}\right\} _{k\in\mathbb{N}}$
of elements of $A$ is said to be $\mathcal{D}$\textit{-weakly convergent
}to $u\in A$ (denoted by writing $u_{k}\stackrel{\mathcal{D}}{\rightharpoonup}u$
), if $g_{k}(u_{k}-u)\rightharpoonup0$ for all choices of the sequence
$\left\{ g_{k}\right\} _{k\in\mathbb{N}}$ in $\mathcal{D}$.
\end{defn}
Note that since $\mathcal{D}$ contains the identity operator $I$,
any $\mathcal{D}$-weakly convergent sequence is also weakly convergent.
The converse is true if $\mathcal{D}$ is finite.

We can now state our main definition:
\begin{defn}
\label{def:cocompact imbedding}Let $A$ and $B$ be Banach spaces
such that $A$ is continuously imbedded into $B$. Let $\mathcal{D}$
be a group of continuous linear bijections on $A$. We say that the
imbedding of $A$ into $B$ is \textit{cocompact relative to} $\mathcal{D}$
if every $\mathcal{D}$-weakly convergent sequence $\left\{ u_{k}\right\} _{k\in\mathbb{N}}$
in $A$ converges in $B$.\end{defn}
\begin{rem}
There is a slightly different version of Definition \ref{def:dwc}
in \cite{Fieseler-Tintarev} where $\mathcal{D}$ is merely a set,
not necessarily a group, and all its elements are bounded linear operators.
Definition \ref{def:cocompact imbedding} first appeared in \cite{Tintarev-BanachCC},
in fact in a marginally more restricted version where $\mathcal{D}$
is a group of linear isometries. 
\end{rem}
For later use we record the following elementary result which follows
immediately from Definition \ref{def:cocompact imbedding}.

\begin{prop}
\label{pro:3spaces}Let $X_{1}$, $X_{2}$ and $X_{3}$ be three Banach
spaces with continuous imbeddings $X_{1}\subset X_{2}\subset X_{3}$.
Suppose that the group $\mathcal{D}$ of linear operators $g:X_{3}\to X_{3}$
acts isometrically from $X_{j}$ to $X_{j}$ for $j=1,2,3$. Then
the imbedding $X_{1}\subset X_{3}$ is $\mathcal{D}$-cocompact whenever
at least one of the imbeddings $X_{1}\subset X_{2}$ and $X_{2}\subset X_{3}$
is $\mathcal{D}$-cocompact. \end{prop}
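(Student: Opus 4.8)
The plan is to unwind Definitions~\ref{def:dwc} and~\ref{def:cocompact imbedding} and reduce the statement to two elementary facts. First, since $\mathcal{D}$ acts isometrically --- hence continuously, and, being a group, bijectively --- on each $X_j$, the notions of $\mathcal{D}$-weak convergence and $\mathcal{D}$-cocompactness make sense with $A$ or $B$ equal to any of $X_1, X_2, X_3$, so the assertion is meaningful. Second, for a sequence that lies in the smaller of two continuously imbedded Banach spaces, weak convergence there propagates to the larger space: if $\iota\colon X_1\to X_2$ is the imbedding and $v_k\rightharpoonup v$ in $X_1$, then for every $\psi\in X_2^{*}$ the functional $\psi\circ\iota$ lies in $X_1^{*}$, so $\langle\psi,v_k\rangle=\langle\psi\circ\iota,v_k\rangle\to\langle\psi\circ\iota,v\rangle=\langle\psi,v\rangle$; thus $v_k\rightharpoonup v$ in $X_2$ (and likewise in $X_3$). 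In particular, taking $g_k=I$, a $\mathcal{D}$-weakly convergent sequence in $X_1$ is weakly convergent in $X_2$ and in $X_3$, which will identify the relevant norm limits.

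Now fix a sequence with $u_k\stackrel{\mathcal{D}}{\rightharpoonup}u$ in $X_1$; one must show $u_k\to u$ in $X_3$. If the imbedding $X_1\subset X_2$ is $\mathcal{D}$-cocompact, then by Definition~\ref{def:cocompact imbedding} the sequence $\{u_k\}$ converges in $X_2$, and its limit is $u$ because $u_k\rightharpoonup u$ in $X_2$; the continuity of $X_2\subset X_3$ then gives $u_k\to u$ in $X_3$. If instead $X_2\subset X_3$ is $\mathcal{D}$-cocompact, the point is to upgrade the hypothesis from $X_1$ to $X_2$: for an arbitrary sequence $\{g_k\}$ in $\mathcal{D}$ we have $g_k(u_k-u)\rightharpoonup0$ in $X_1$ by assumption (here $g_k(u_k-u)$ is the same element of $X_1\subset X_2$ whichever restriction of $g_k$ we use), hence $g_k(u_k-u)\rightharpoonup0$ in $X_2$ by the second fact above. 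Since $\{g_k\}$ was arbitrary, $u_k\stackrel{\mathcal{D}}{\rightharpoonup}u$ in $X_2$, and $\mathcal{D}$-cocompactness of $X_2\subset X_3$ yields $u_k\to u$ in $X_3$.

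There is essentially no hard step here --- as the paper states, it follows immediately from the definitions. The only points requiring a little care are the direction in which weak convergence moves between the nested spaces (from the smaller to the larger, by restricting functionals), and the bookkeeping that ensures $\mathcal{D}$-weak convergence is tested in the appropriate space in each of the two cases. One may also note that only continuity of the operators $g\in\mathcal{D}$, not the full isometry hypothesis, is actually used; isometry is merely a convenient way to guarantee, via the group structure, the bijectivity of each $g$ on each $X_j$ required by Definition~\ref{def:dwc}.
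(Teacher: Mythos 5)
Your argument is correct and is exactly the straightforward unwinding of Definitions \ref{def:dwc} and \ref{def:cocompact imbedding} that the paper has in mind when it calls the proposition immediate (no proof is given there): in the first case norm convergence in $X_2$ passes to $X_3$ by continuity of the imbedding, and in the second case $\mathcal{D}$-weak convergence transfers from $X_1$ to $X_2$ because functionals on the larger space restrict to the smaller one. The only quibble is with your closing aside: bijectivity of each $g$ on $X_j$ comes from the group structure together with the invariance $g^{\pm1}(X_j)\subset X_j$, not from isometry as such, but this does not affect the proof.
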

\begin{rem}
When{, in the context of Definition \ref{def:cocompact imbedding},}
$\mathcal{D}=\left\{ I\right\} $, {and $A$
is reflexive,} then of course cocompact imbeddings are simply compact
imbeddings. 
\end{rem}

\begin{rem}
The notion of cocompactness facilitates the formulation of many results
which can be considered as extensions of the Banach-Alaoglu theorem
and which have wide applications. For example, an abstract version
of such a result, in the context of Hilbert spaces due to Schindler
and the second author of this paper is given in \cite{SchindlerTintarev}
and also appears as Theorem 3.1 of \cite{Fieseler-Tintarev} pp.~62--63.
Other versions, most of them in the setting of particular function
spaces, and some of them for particular sequences, have been obtained
(often independently) by a number of authors. Various terminologies,
such as \textit{splitting lemmas, profile decompositions, intermediate
topology, défaut de compacité, mutual divergence, dislocations }or\textit{
rescalings, vanishing sequences, asymptotic orthogonality,} etc.~have
been used by a number of authors to describe the phenomena encountered
in these results. We refer to \cite{TinSurvey} for a survey of such
results and their applications.

The abstract Hilbert space version (i.e. the above-mentioned result
in \cite{SchindlerTintarev}) of this refinement of the Banach-Alaoglu
theorem states that, in the presence of a suitably chosen group of
operators $\mathcal{D}$ acting on a Hilbert space $A$, every bounded
subsequence in $A$ has a subsequence of the following special structure:
Each term in the subsequence is the sum of a principal term and a
remainder term. The remainder terms form a sequence which converges
$\mathcal{D}$-weakly, and each principal term is a (possibly infinite)
sum of {}``dislocated profiles'', i.e. terms of the form $g_{k}w$
where $g_{k}\in\mathcal{D}$ and $w\in A$. 

Although no general analogue of this result is known for the case
where $A$ is an arbitrary Banach space, the other results alluded
to at the beginning of this remark are all of the same form, for suitable
particular choices of $A$ and $\mathcal{D}$. The above-mentioned
term, profile decomposition, is the one most commonly used for the
special subsequence provided by such results.

The practical value of profile decompositions depends on finding some
concrete space $B$ 
{where the remainder sequence
tends to zero in the norm.}
This is precisely what can be assured in those cases where the
imbedding of $A$ into $B$ is known to be cocompact.
\end{rem}
In an early cocompactness result about classical Sobolev imbeddings,
$\mathcal{D}$ is taken to be the group $\mathcal{D}_{\mathbb{R}^{N}}$
of shifts $u\mapsto u(\cdot-y)$. This result is essentially due to
Lieb \cite{Lieb} Lemma 6 on p.~447. In fact Lieb showed that any
$\mathcal{D}_{\mathbb{R}^{N}}$-weakly convergent sequence in $W^{1,p}\left(\mathbb{R}^{N}\right)$
converges in measure, from which one can easily conclude that subcritical
imbeddings of $W^{1,p}\left(\mathbb{R}^{N}\right)$ are $\mathcal{D}_{\mathbb{R}^{N}}$-cocompact.
The first explicit statement of this latter result is due, independently,
to P.-L.~Lions \cite{PLL1b} Lemma I.1 on p.~231, which we restate
below as Theorem \ref{thm:lieb}. In the paper \cite{Lions87} Lions
showed the existence of a profile decomposition for a specific sequence
in $W^{1,p}\left(\mathbb{R}^{N}\right)$, again with $\mathcal{D}=\mathcal{D}_{\mathbb{R}^{N}}$.
This result gave a more detailed description of the {}``loss of compactness''
for that sequence than had been shown in his celebrated papers \cite{PLL1a,PLL1b,PLL2a,PLL2b}
on concentration compactness. The first proof of the existence of
a profile decomposition for an \textit{arbitrary} bounded sequence
in the homogeneous Sobolev space $\dot{W}^{1,p}\left(\mathbb{R}^{N}\right)$
was given by Solimini \cite{Solimini} for the case where $\mathcal{D}$
is the product group of the actions of translations and dilations
on $\mathbb{R}^{N}$. It is easy to deduce the existence of the particular
profile decomposition in \cite{Lions87} from the result of \cite{Solimini}.
Subsequently Gérard \cite{Gerard} gave an independent proof of a
slightly weaker version of Solimini's result for the same group $\mathcal{D}$,
but for somewhat different function spaces, namely $\dot{W}^{s,p}(\mathbb{R}^{N})$
with $0<s<N/p$ and $p=2$. Gérard's result was extended by Jaffard
\cite{Jaffard} to all $p\in(1,\infty)$. Independently, the second
author of this paper obtained similar results 
(Chapter 9\cite{Fieseler-Tintarev})
some portion of them jointly with Fieseler or Schindler, where $\mathbb{R}^{N}$
is replaced by a cocompact Riemannian manifold, an arbitrary nilpotent
stratified Lie group, or a fractal blowup, with appropriate choices
of $\mathcal{D}$. Additional references to other related works will
be given in \cite{TinSurvey}. Among the most recent developments
we mention the papers \cite{Tao1,Tao2}, in which Tao, Visan and Zhang
have proved the cocompactness of a Strichartz imbedding for the time-dependent
Schrödinger equation, and also the work of Koch \cite{Koch} where
results in the style of \cite{Gerard,Jaffard} are presented for embeddings
of $L^{p}$ spaces into homogeneous Besov spaces with negative index
of smoothness.

The main result of this paper, Theorem \ref{thm:main abstract}, deals
with persistence of cocompactness for interpolated spaces. It can
be considered as a sort of counterpart to results about persistence
of compactness for operators mapping between {}``real method'' or
{}``complex method'' interpolation spaces, in particular those in
Section 9.6 of \cite{ca} and in \cite{Persson}, in which hypotheses
having a partial analogy with hypotheses of Theorem \ref{thm:main abstract}
are imposed. 
\begin{rem}
Note however that the compactness results of \cite{ca} and \cite{Persson}
were subsequently found to 
also hold
without these kinds of hypotheses
and/or under other alternative hypotheses. (See e.g.~\cite{CKS,CwRCI,CwKa,CwJa,CwLattice}
and the references therein.) An analogous complete removal of additional
conditions in the case of cocompactness would mean that persistence
of cocompactness under interpolation holds for all choices of the
group $\mathcal{D}$, which remains an open question. A negative answer
to it would not surprise us.
\end{rem}
As examples of applications of Theorem \ref{thm:main abstract}, we
prove the cocompactness of classical Peetre imbeddings \cite{PeetreSoboleff}
of inhomogeneous Sobolev spaces with fractional indices of smoothness
into $L^{p}$spaces, relative to the group 
$\mathcal{D}_{{\mathbb{Z}^{N}}}$
of {lattice} shifts $u\mapsto u(\cdot-y)$ {with
$y\in\mathbb{Z}^{N}$}. 
This is done in Theorem \ref{thm:ournew}.
Analogous results for imbeddings of Besov spaces are given in Theorems
\ref{thm:cocoBesov} and \ref{thm:Coco-Bes-Lp}. These latter results
can be thought of as variants of the results mentioned above of Gérard,
Jaffard and Koch. In some ways they are not as sharp. On the other
hand, unlike their results, ours deal with the case of inhomogeneous
spaces.

Our results are stated in Section \ref{sec:mainresults}. Almost all
of their proofs are deferred to subsequent sections. In Section 3
we give the proof of Theorem \ref{thm:main abstract}. Proofs of Theorem
\ref{thm:ournew} and of Theorems \ref{thm:cocoBesov} and \ref{thm:Coco-Bes-Lp}
are provided in Sections 4 and 5 respectively. We conclude our paper
with Section 6 presenting two applications: We prove the compactness
of certain imbeddings for subspaces of radial functions and the existence
of minimizers for some isoperimetric problems involving fractional
Sobolev spaces.

This paper lies at the intersection of two fields, the analysis of
Sobolev spaces, and the theory of interpolation spaces. Since some
readers may be more familiar with one of these fields than the other,
we have taken the liberty of summarizing some of the basic notions
from both of them, partly in the main body of the paper, and partly
in appendices. In particular, in Appendix A, we recall definitions
and results which we need from interpolation space theory, and in
Appendix B we provide a version of the Brezis-Lieb lemma.

\section{\label{sec:mainresults}Statements of the main results}

In all that follows, whenever we deal with Banach spaces, whose elements
are functions $u:\mathbb{R}^{N}\to\mathbb{\mathbb{C}}$ and whose
norms are translation invariant, we will always choose the group $\mathcal{D}$
of Definitions \ref{def:dwc}and \ref{def:cocompact imbedding} to
be the set of lattice shifts. In other words, we take

\begin{equation}
\mathcal{D}=\mathcal{D}_{\mathbb{Z}^{N}}:=\left\{ g_{y}\right\} _{y\in\mathbb{Z}^{N}}\ \mbox{where }g_{y}u=u(\cdot-y)\,.\label{eq:defdlat}\end{equation}

Whenever we deal here with a Banach couple $\left(A_{0},A_{1}\right)$
we will always associate a group $\mathcal{D}$ to that couple, and
the elements $g$ of $\mathcal{D}$ will always be assumed to be linear
operators $g:$$A_{0}+A_{1}\to A_{0}+A_{1}$, such that \begin{equation}
g(A_{j})\subset A_{j}\ \mbox{and }g:A_{j}\to A_{j}\ \mbox{is an isometry for }j=0,1\,.\label{eq:gauge}\end{equation}

There are several frequently used different equivalent norms for the
interpolation spaces $\left(A_{0},A_{1}\right)_{\theta,p}$, $\left[A_{0},A_{1}\right]_{\theta}$
and for $A_{0}+A_{1}$. Here we will always use the standard norms
whose definitions are recalled in Appendix A. 
\begin{lem}
Let $(A_{0},A_{1})$ be a Banach couple and let $\mathcal{D}$ be
a group of linear maps $g:A_{0}+A_{1}\to A_{0}+A_{1}$ satisfying
(\ref{eq:gauge}). Then each $g\in\mathcal{D}$ is also an isometry
on $A_{0}+A_{1}$. Moreover, for every $p\in[1,\infty)$, $\theta\in(0,1)$,
the restriction of $g$ to $(A_{0},A_{1})_{\theta,p}$, respectively
$[A_{0},A_{1}]_{\theta}$, is an isometry on $(A_{0},A_{1})_{\theta,p}$,
respectively $[A_{0},A_{1}]_{\theta}$.\end{lem}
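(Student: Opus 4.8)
The plan is to reduce all three assertions to a single structural observation: because $\mathcal{D}$ is a \emph{group}, each $g\in\mathcal{D}$ restricts to a \emph{surjective} isometry of $A_0$ and of $A_1$, and is a bijection of $A_0+A_1$; once this is in hand, each norm is preserved simply by transporting, via $g$, the data that define it. To get the surjectivity, apply (\ref{eq:gauge}) to $g$ and to $g^{-1}\in\mathcal{D}$: from $g(A_j)\subset A_j$ and $g^{-1}(A_j)\subset A_j$ we obtain $A_j=g\bigl(g^{-1}(A_j)\bigr)\subset g(A_j)\subset A_j$, so $g(A_j)=A_j$. Hence $g|_{A_j}\colon A_j\to A_j$ is a surjective linear isometry for $j=0,1$, and its inverse $g^{-1}|_{A_j}$ is an isometry as well.

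Next I would treat $A_0+A_1$ together with the $K$-functional. Fix $a\in A_0+A_1$ and $t>0$. If $a=a_0+a_1$ with $a_j\in A_j$, then $ga=ga_0+ga_1$ is an admissible decomposition of $ga$ with $\|ga_0\|_{A_0}+t\|ga_1\|_{A_1}=\|a_0\|_{A_0}+t\|a_1\|_{A_1}$; conversely every decomposition $ga=b_0+b_1$ with $b_j\in A_j$ is of this form, coming from $a=g^{-1}b_0+g^{-1}b_1$, again with the same value. Thus $(a_0,a_1)\mapsto(ga_0,ga_1)$ is a value-preserving bijection between the admissible decompositions of $a$ and those of $ga$, so $K(t,ga)=K(t,a)$ for every $t>0$. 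Taking $t=1$ gives $\|ga\|_{A_0+A_1}=\|a\|_{A_0+A_1}$, and substituting $K(t,ga)=K(t,a)$ into the standard norm recalled in Appendix A,
\[
\|ga\|_{(A_0,A_1)_{\theta,p}}^{p}=\int_0^\infty\bigl(t^{-\theta}K(t,ga)\bigr)^{p}\,\frac{dt}{t}=\int_0^\infty\bigl(t^{-\theta}K(t,a)\bigr)^{p}\,\frac{dt}{t}=\|a\|_{(A_0,A_1)_{\theta,p}}^{p},
\]
shows that $g$ is an isometry of $(A_0,A_1)_{\theta,p}$ for every $\theta\in(0,1)$ and $p\in[1,\infty)$.

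For the complex method I would use that, by the previous paragraph, $g$ is a bounded (indeed isometric) linear operator on $A_0+A_1$, so composition with $g$ preserves vector-valued analyticity and boundedness. Hence, if $f$ belongs to the Calder\'on space $\mathcal{F}(A_0,A_1)$ used in Appendix A to define $[A_0,A_1]_\theta$, then $g\circ f$ is again analytic on the open strip and bounded and continuous on its closure, its boundary traces $t\mapsto g\,f(j+it)$ are continuous $\mathbb{R}\to A_j$ (and, if the definition used also requires it, still tend to $0$ as $t\to\pm\infty$), and $\sup_t\|g\,f(j+it)\|_{A_j}=\sup_t\|f(j+it)\|_{A_j}$ for $j=0,1$ because $g|_{A_j}$ is an isometry; therefore $\|g\circ f\|_{\mathcal{F}}=\|f\|_{\mathcal{F}}$. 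Since $(g\circ f)(\theta)=g\,f(\theta)$, taking the infimum over all $f$ with $f(\theta)=a$ gives $\|ga\|_{[A_0,A_1]_\theta}\le\|a\|_{[A_0,A_1]_\theta}$; applying this with $g^{-1}$ in place of $g$ and $ga$ in place of $a$ yields the reverse inequality, so $g$ is an isometry of $[A_0,A_1]_\theta$. (Equivalently, $f\mapsto g\circ f$ is a bijection of $\mathcal{F}(A_0,A_1)$ with inverse $f\mapsto g^{-1}\circ f$, giving the equality directly.)

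I do not expect a serious obstacle here. If forced to single out the delicate point, it is the surjectivity $g(A_j)=A_j$: this is the one place where the group hypothesis on $\mathcal{D}$ is genuinely used, and without it the argument would only deliver the inequalities $\|ga\|\le\|a\|$ in all three norms rather than equalities. Everything else is bookkeeping with the standard definitions of $A_0+A_1$, $(A_0,A_1)_{\theta,p}$ and $[A_0,A_1]_\theta$.
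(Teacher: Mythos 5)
Your proof is correct, but it follows a different route from the paper. The paper disposes of the lemma in one line: since $g$ and $g^{-1}$ both belong to $\mathcal{D}$, each is (by (\ref{eq:gauge})) a norm-one map of $A_j$ into itself for $j=0,1$, so the basic interpolation theorem (Theorem \ref{thm:fund interp}, together with the corresponding elementary estimate for the $A_0+A_1$ norm) makes both $g$ and $g^{-1}$ contractions on $A_0+A_1$, on $(A_0,A_1)_{\theta,p}$ and on $[A_0,A_1]_\theta$; writing $\Vert a\Vert=\Vert g^{-1}ga\Vert\le\Vert ga\Vert\le\Vert a\Vert$ then yields the isometry, with no need for your preliminary surjectivity step $g(A_j)=A_j$. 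You instead verify the isometry directly from the definitions: the value-preserving bijection between decompositions gives $K(t,ga)=K(t,a)$ for all $t>0$ (hence the $A_0+A_1$ and real-method cases at once), and composition $f\mapsto g\circ f$ is a norm-preserving bijection of the Calder\'on space $\mathcal{F}(A_0,A_1)$, settling the complex case. Your argument is more self-contained and actually gives slightly more (exact invariance of the $K$-functional, which would cover any $K$-based construction), while the paper's argument is shorter and works verbatim for any exact interpolation functor without ever touching the internal definitions; note also that your surjectivity observation, while correct, is not really needed, since the isometry of $g^{-1}|_{A_j}$ follows directly from (\ref{eq:gauge}) applied to $g^{-1}\in\mathcal{D}$.
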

\begin{proof}
This follows immediately from the basic interpolation properties of
the spaces $\left(A_{0},A_{1}\right)_{\theta,p}$ and $\left[A_{0},A_{1}\right]_{\theta}$
and $A_{0}+A_{1}$ applied for the operators $g$ and $g^{-1}$. 
\end{proof}
\smallskip{}

We now introduce a definition of an operator family whose properties
(i) and (ii) below are reminiscent of various conditions imposed to
obtain interpolation of compactness in Section 9.6 of \cite{ca} and
in \cite{Persson}. As we shall see below, the standard mollifiers
in Sobolev spaces, equipped with lattice shifts, are an example of
a family of operators $M_{t}$ satisfying the definition.
\begin{defn}
\label{def:mollifiers}Let $\left(A_{0},A_{1}\right)$ be a Banach
couples
with $A_{1}$ 
is
continuously imbedded in $A_{0}$ and let
$\mathcal{D}$ be a group of linear operators $g:A_{0}+A_{1}\to A_{0}+A_{1}$
which satisfies (\ref{eq:gauge}). 
Let $A_{1}$be continuously
imbedded
into some Banach space $B_{1}$. A family of bounded operators $\{M_{t}\}_{t\in(0,1)}$
from $A_{0}$ to $A_{1}$ is said to be a \emph{family of }$\mathcal{D}$\emph{-covariant}
\emph{mollifiers} (relative to a space $B_{1}$) if it satisfies the
following conditions: 

\noindent \begin{flushleft}
$\begin{array}{llc}
\mbox{(i)} & \mbox{For }j=0,1,\:\mbox{the norm of }M_{t}\,\mbox{as a continuous map from }A_{j}\,\mbox{into itself is }\\
 & \mbox{bounded independently of }t\in(0,1),\,\mbox{i.e.,}{\displaystyle {\displaystyle }\sup_{t\in(0,1)}}\Vert M_{t}\Vert_{A_{j}\to A_{j}}<\infty\,.\\
\\\mbox{\mbox{(ii)}} & \mbox{The function }\sigma(t):=\|I-M_{t}\|_{A_{1}\to B_{1}}\text{ satisfies }{\displaystyle \lim_{t\to0}\,}\sigma(t)=0\,.\\
\\\mbox{(iii)} & \mbox{For each }g\in\mathcal{D},\;\mbox{and }t\in(0,1),\ \mbox{there exists an element }h_{g,t}\in\mathcal{D\ }\\
 & \mbox{\mbox{such that }}gM_{t}=M_{t}h_{g,t}\,.\end{array}$
\par\end{flushleft}
\end{defn}
Our main result is expressed in terms of general Banach couples. 
\begin{thm}
\label{thm:main abstract}Let $\left(A_{0},A_{1}\right)$ and $\left(B_{0},B_{1}\right)$
be Banach couples with $A_{j}$ continuously imbedded in $B_{j}$
for $j=0,1$. Suppose, further, that $A_{1}$ is continuously imbedded
in $A_{0}$. Let $\mathcal{D}$ be a group of linear operators $g:B_{0}+B_{1}\to B_{0}+B_{1}$
which satisfies (\ref{eq:gauge}) with respect to both of the couples
$\left(A_{0},A_{1}\right)$ and $\left(B_{0},B_{1}\right)$. Assume
that there exists a $\mathcal{D}$-covariant mollifier family $\{M_{t}:\, A_{0}\to A_{1}\}_{t\in(0,1)}$.
(See Definition \ref{def:mollifiers}.) If, furthermore, $A_{1}$
is $\mathcal{D}$-cocompactly imbedded into $B_{1}$, then, for every
$\theta\in(0,1)$ and $q\in[1,\infty]$, the space $(A_{0},A_{1})_{\theta,q}$
is $\mathcal{D}$-cocompactly imbedded into $(B_{0},B_{1})_{\theta,q}$
and the space $[A_{0},A_{1}]_{\theta}$ is $\mathcal{D}$-cocompactly
imbedded into $[B_{0},B_{1}]_{\theta}$. 
\end{thm}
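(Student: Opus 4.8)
I would first reduce to the case where the $\mathcal{D}$-weak limit is $0$, which Definition~\ref{def:dwc} permits: replacing $u_{k}$ by $u_{k}-u$, it is enough to prove that every $\mathcal{D}$-weakly null sequence in $(A_{0},A_{1})_{\theta,q}$ (respectively $[A_{0},A_{1}]_{\theta}$) tends to $0$ in $(B_{0},B_{1})_{\theta,q}$ (respectively $[B_{0},B_{1}]_{\theta}$); such a sequence is weakly null, hence norm bounded, say by $R$. Throughout, let $\mathcal{I}$ stand for either functor, $A_{\mathcal{I}}=\mathcal{I}(A_{0},A_{1})$, $B_{\mathcal{I}}=\mathcal{I}(B_{0},B_{1})$; note $A_{\mathcal{I}}\hookrightarrow B_{\mathcal{I}}$ and, because $A_{1}\hookrightarrow A_{0}$, that $A_{0}+A_{1}=A_{0}$, so $u_{k}\in A_{0}$ and $M_{t}u_{k}\in A_{1}$ make sense. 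The engine of the proof is the splitting $u_{k}=M_{t}u_{k}+(I-M_{t})u_{k}$; I would bound $(I-M_{t})u_{k}$ uniformly in $k$ for small $t$, and, separately, $M_{t}u_{k}$ for each fixed $t$ as $k\to\infty$, and then combine the two by an $\varepsilon/2$ argument.

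\textbf{Step 1 --- the mollified error.} For the term $(I-M_{t})u_{k}$ I would regard $T_{t}:=I-M_{t}$ as an operator from the couple $(A_{0},A_{1})$ to $(B_{0},B_{1})$: it maps $A_{j}$ into itself (since $M_{t}(A_{0})\subset A_{1}\subset A_{0}$ and $M_{t}(A_{1})\subset A_{1}$), hence into $B_{j}$. Condition~(i) gives $\sup_{t}\|M_{t}\|_{A_{0}\to A_{0}}<\infty$, so $\|T_{t}\|_{A_{0}\to B_{0}}$ is bounded by a constant $c$ independent of $t$ (using also the imbedding $A_{0}\hookrightarrow B_{0}$), while condition~(ii) gives exactly $\|T_{t}\|_{A_{1}\to B_{1}}=\sigma(t)$. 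A basic interpolation theorem for bounded operators (see Appendix~A) then yields $\|T_{t}\|_{A_{\mathcal{I}}\to B_{\mathcal{I}}}\le C\,c^{1-\theta}\sigma(t)^{\theta}$, so that $\sup_{k}\|(I-M_{t})u_{k}\|_{B_{\mathcal{I}}}\le C\,c^{1-\theta}\sigma(t)^{\theta}R$, which tends to $0$ as $t\to0$ by condition~(ii). (If one prefers an explicit argument: decompose $u_{k}$ nearly optimally for the $K$-functional $K(\sigma(t)s/c,u_{k};A_{0},A_{1})$, apply $T_{t}$ to the two summands, leave the first in $B_{0}$ and the second in $B_{1}$, and change variables in the $K$-method integral; for the complex functor, multiply the defining analytic function by $z\mapsto\mu^{z-\theta}$ with $\mu=c/\sigma(t)$.)

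\textbf{Step 2 --- the mollified part, and conclusion.} I would show first that $M_{t}u_{k}\stackrel{\mathcal{D}}{\rightharpoonup}0$ in $A_{1}$ for each fixed $t$: given any $\{g_{k}\}\subset\mathcal{D}$, condition~(iii) provides $h_{g_{k},t}\in\mathcal{D}$ with $g_{k}M_{t}=M_{t}h_{g_{k},t}$, hence $g_{k}M_{t}u_{k}=M_{t}(h_{g_{k},t}u_{k})$; since $u_{k}$ is $\mathcal{D}$-weakly null in $A_{\mathcal{I}}$, the elements $h_{g_{k},t}u_{k}$ (which lie in $A_{\mathcal{I}}$ because $\mathcal{D}$ acts on it) are weakly null in $A_{\mathcal{I}}$, and $M_{t}$, being bounded from $A_{\mathcal{I}}\subset A_{0}$ into $A_{1}$, is weakly continuous, so $M_{t}(h_{g_{k},t}u_{k})\rightharpoonup0$ in $A_{1}$. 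As $\{g_{k}\}$ is arbitrary, $M_{t}u_{k}\stackrel{\mathcal{D}}{\rightharpoonup}0$ in $A_{1}$, whence, by the assumed $\mathcal{D}$-cocompactness of $A_{1}\hookrightarrow B_{1}$, $M_{t}u_{k}\to0$ in $B_{1}$. Since also $M_{t}u_{k}\in A_{1}\subset B_{0}\cap B_{1}$ and $\|M_{t}u_{k}\|_{B_{0}}$ is bounded uniformly in $k$ (by condition~(i), $\|u_{k}\|_{A_{\mathcal{I}}}\le R$ and $A_{\mathcal{I}}\hookrightarrow A_{0}$), the interpolation inequality $\|w\|_{B_{\mathcal{I}}}\le C\|w\|_{B_{0}}^{1-\theta}\|w\|_{B_{1}}^{\theta}$ for $w\in B_{0}\cap B_{1}$ forces $\|M_{t}u_{k}\|_{B_{\mathcal{I}}}\to0$ as $k\to\infty$, for each fixed $t$. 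Finally, given $\varepsilon>0$: fix $t$ so that the bound from Step~1 is below $\varepsilon/2$, then choose $K$ so that $\|M_{t}u_{k}\|_{B_{\mathcal{I}}}<\varepsilon/2$ for $k\ge K$; then $\|u_{k}\|_{B_{\mathcal{I}}}<\varepsilon$ for $k\ge K$. This proves $u_{k}\to0$ in $B_{\mathcal{I}}$ for both functors.

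\textbf{Where the difficulty lies.} The delicate step is the first one. The error $(I-M_{t})u_{k}$ is \emph{not} small in $B_{0}$, and --- $u_{k}$ generally not lying in $B_{1}$ --- is not even an element of $B_{1}$, so neither endpoint bound is individually of any use. What makes the argument work is that a \emph{uniformly bounded} $A_{0}\to B_{0}$ estimate together with the \emph{vanishing} $A_{1}\to B_{1}$ estimate of condition~(ii) produces, after interpolation, an operator norm of order $\sigma(t)^{\theta}\to0$; equivalently, the gain comes from a scale shift in the $K$-functional (or a factor $\mu^{z-\theta}$ in the Calder\'on construction) by an amount $\sim\log(1/\sigma(t))$ that diverges as $t\to0$. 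The second essential ingredient, used in Step~2, is the covariance relation~(iii): it is exactly what lets $\mathcal{D}$-weak nullity be transported through $M_{t}$, so that cocompactness of $A_{1}\hookrightarrow B_{1}$ may be applied to $M_{t}u_{k}$.
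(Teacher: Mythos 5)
Your proposal is correct and follows essentially the same route as the paper's proof: the same splitting $u_{k}=M_{t}u_{k}+(I-M_{t})u_{k}$, with the covariance relation (iii) plus cocompactness of $A_{1}\hookrightarrow B_{1}$ and the interpolation inequality for elements of $B_{0}\cap B_{1}$ handling the mollified part, and the fundamental interpolation theorem applied to $I-M_{t}$ (uniform $A_{0}\to B_{0}$ bound, vanishing $A_{1}\to B_{1}$ bound) giving the $\sigma(t)^{\theta}$ gain for the error. The only differences are cosmetic (order of the two steps and the explicit remarks on the $K$-functional/Calder\'on rescaling).
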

\smallskip{}

We shall apply Theorem \ref{thm:main abstract} to obtain cocompactness
of interpolated imbeddings between certain function spaces. Our point
of departure for doing this is the following cocompactness property
of Sobolev imbeddings. It can be immediately shown to be an equivalent
reformulation of Lemma 6 on p.~447 of Lieb's paper \cite{Lieb} and
also of Lemma I.1 on p.~231 of Lion's paper \cite{PLL1b}. 
\begin{thm}
\label{thm:lieb} Suppose that $p\in(1,\infty)$. The Sobolev imbedding
of $W^{1,p}(\mathbb{R}^{N})$ into $L^{q}(\mathbb{R}^{N})$, $p<q<p^{*}$,
where $p^{*}=\frac{pN}{N-p}$ for $N>p$ and $p^{*}=\infty$ otherwise,
is $\mathcal{D}_{\mathbb{Z}^{N}}$-cocompact.
\end{thm}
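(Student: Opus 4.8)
The plan is to recover this from the classical ``vanishing'' principle of Lieb and Lions. Fix the half-open fundamental cube $Q:=[-1/2,1/2)^N$ of the lattice $\mathbb{Z}^N$, so that $\{n+Q\}_{n\in\mathbb{Z}^N}$ tiles $\mathbb{R}^N$, and fix a bounded open cube $Q'$ with $\overline{Q}\subset Q'$. Since $u\in W^{1,p}(\mathbb{R}^N)$, Definition \ref{def:dwc} shows that $u_k\stackrel{\mathcal{D}_{\mathbb{Z}^N}}{\rightharpoonup}u$ is equivalent to $u_k-u\stackrel{\mathcal{D}_{\mathbb{Z}^N}}{\rightharpoonup}0$, and $u_k\to u$ in $L^q$ if and only if $u_k-u\to0$ in $L^q$; hence it suffices to show that $u_k\stackrel{\mathcal{D}_{\mathbb{Z}^N}}{\rightharpoonup}0$ in $W^{1,p}(\mathbb{R}^N)$ forces $u_k\to0$ in $L^q(\mathbb{R}^N)$. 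Such a sequence is in particular weakly convergent to $0$, hence bounded in $W^{1,p}(\mathbb{R}^N)$.

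The first step is to establish the ``vanishing'' property
\[
\lim_{k\to\infty}\ \sup_{n\in\mathbb{Z}^N}\int_{n+Q}|u_k|^p\,dx=0 .
\]
Suppose it fails. Then, after passing to a subsequence, there are $\delta>0$ and $y_k\in\mathbb{Z}^N$ with $\int_{y_k+Q}|u_k|^p\,dx\ge\delta$ for every $k$. Put $v_k:=g_{-y_k}u_k=u_k(\cdot+y_k)$. Applying Definition \ref{def:dwc} to $\{u_k\}$ with the sequence $\{g_{-y_k}\}_{k}\subset\mathcal{D}_{\mathbb{Z}^N}$ gives $v_k\rightharpoonup0$ weakly in $W^{1,p}(\mathbb{R}^N)$, hence also in $W^{1,p}(Q')$ after restriction. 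Since the Rellich--Kondrachov imbedding $W^{1,p}(Q')\hookrightarrow L^p(Q')$ is compact (note $p<p^*$), we get $v_k\to0$ in $L^p(Q')$, whence $\int_{y_k+Q}|u_k|^p\,dx=\int_Q|v_k|^p\,dx\to0$, contradicting $\int_{y_k+Q}|u_k|^p\,dx\ge\delta$. This proves the displayed limit; and since each ball $B(y,1)$ meets at most $C_N$ of the cubes $n+Q$, with $C_N$ depending only on $N$, it follows that $\sup_{y\in\mathbb{R}^N}\int_{B(y,1)}|u_k|^p\,dx\to0$ as $k\to\infty$ as well.

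It remains to invoke the classical result that a sequence bounded in $W^{1,p}(\mathbb{R}^N)$ with $\sup_{y\in\mathbb{R}^N}\int_{B(y,1)}|u_k|^p\,dx\to0$ converges to $0$ in $L^q(\mathbb{R}^N)$ for every $q\in(p,p^*)$: this is Lemma I.1 on p.~231 of \cite{PLL1b}, equivalently Lemma 6 on p.~447 of \cite{Lieb}. That is where the analytic substance lies --- one applies the Sobolev imbedding $W^{1,p}(Q')\hookrightarrow L^{p^*}(Q')$ on each cube of a tiling of $\mathbb{R}^N$, interpolates the local $L^q$-norm between the local $L^p$- and $L^{p^*}$-norms, and sums, using $q<p^*$ to make the resulting series converge. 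Applied to our $\{u_k\}$ it yields $u_k\to0$ in $L^q(\mathbb{R}^N)$ for $p<q<p^*$, that is, the imbedding $W^{1,p}(\mathbb{R}^N)\hookrightarrow L^q(\mathbb{R}^N)$ is $\mathcal{D}_{\mathbb{Z}^N}$-cocompact. Since the theorem is stated precisely as a reformulation of those two lemmas, the citation is legitimate; running the Rellich argument of the second step in reverse shows moreover that, for bounded sequences, the vanishing condition is in fact \emph{equivalent} to $\mathcal{D}_{\mathbb{Z}^N}$-weak convergence to $0$, so the reformulation is exact.
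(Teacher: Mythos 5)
Your proposal is correct and follows essentially the same route as the paper, which proves Theorem \ref{thm:lieb} simply by asserting that it is an equivalent reformulation of Lieb's Lemma 6 and Lions' Lemma I.1; you merely make explicit the routine translation (via Rellich--Kondrachov on a fundamental cell, shifted by the lattice elements) between $\mathcal{D}_{\mathbb{Z}^N}$-weak convergence to $0$ and the vanishing condition in those lemmas. That filled-in equivalence is sound, so nothing further is needed.
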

In the following elementary application of Theorem \ref{thm:main abstract},
we shall extend this property to the Sobolev imbedding of the spaces
$W^{\alpha,p}(\mathbb{R}^{N})$ for all $\alpha\in(0,\infty)$. We
recall one of the equivalent definitions of the space $W^{\alpha,p}(\mathbb{R}^{N})$,
namely as the space of all functions $f:\mathbb{R}^{N}\to\mathbb{R}$
in $L^{p}(\mathbb{R}^{N})$ whose Fourier transforms $\widehat{f}$
are such that $(1+\left|\xi\right|^{2})^{\alpha/2}\widehat{f}(\xi)$
is also the Fourier transform of a function in $L^{p}(\mathbb{R}^{N})$.
This definition is valid for all real values of $\alpha>0$, including
non integer values.

We recall the Sobolev--Peetre imbedding theorem, which states that
the continuous inclusion $W^{\alpha,p}(\mathbb{R}^{N})\subset L^{q}(\mathbb{R}^{N})$
holds whenever $\alpha$ is positive and $1<p\le q\le p_{\alpha}^{*}$,
where the critical exponent $p_{\alpha}^{*}$ is defined by \begin{equation}
p_{\alpha}^{*}=\left\{ \begin{array}{ccc}
\frac{pN}{N-\alpha p} & , & N>\alpha p\\
\infty & , & N\le\alpha p\end{array}\right..\label{eq:dce}\end{equation}
When $\alpha=1$ the prevalent notation is to write $p^{*}$ instead
of $p_{1}^{*}$ (as we did just above in Theorem \ref{thm:lieb}). 
\begin{thm}
\label{thm:ournew}Suppose that $\alpha\in(0,\infty)$ and $p\in(1,\infty)$.
The Sobolev--Peetre imbedding of $W^{\alpha,p}(\mathbb{R}^{N})$ into
$L^{q}(\mathbb{R}^{N})$ is $\mathcal{D}_{\mathbb{Z}^{N}}$-cocompact
whenever $p<q<p_{\alpha}^{*}$. Moreover, the imbedding $W^{\alpha+\gamma,p}(\mathbb{R}^{N})\subset W^{\gamma,q}(\mathbb{R}^{N})$
is $\mathcal{D}_{\mathbb{Z}^{N}}$-cocompact for every $\gamma>0$.
\end{thm}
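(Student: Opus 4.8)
The plan is to deduce Theorem~\ref{thm:ournew} directly from Theorem~\ref{thm:main abstract} and Theorem~\ref{thm:lieb}, using the fact that fractional Sobolev spaces arise as complex interpolation spaces between Sobolev spaces of integer order (and between a Sobolev space and an $L^p$ space). First I would set up the relevant Banach couple. For the first assertion, fix a positive integer $m$ with $\alpha < m$ and $p < q < p_\alpha^*$. Write $\alpha = \theta m$ for the appropriate $\theta \in (0,1)$, and choose the couple $(A_0,A_1) = (L^p(\mathbb{R}^N), W^{m,p}(\mathbb{R}^N))$. By the standard identification of Bessel-potential spaces as complex interpolation spaces, $[L^p, W^{m,p}]_\theta = W^{\alpha,p}(\mathbb{R}^N)$ with equivalence of norms (and similarly $(L^p, W^{m,p})_{\theta,q'}$ gives a Besov space which also embeds appropriately); one must be slightly careful, since Theorem~\ref{thm:main abstract} delivers cocompactness into the \emph{interpolated} target space with the \emph{standard} interpolation norm, and then one invokes Proposition~\ref{pro:3spaces} to pass between equivalent norms and to chain embeddings. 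For the target couple I would take $(B_0,B_1) = (L^{p_0}(\mathbb{R}^N), L^{q_1}(\mathbb{R}^N))$ with exponents $p_0 \le p$, $q_1 < p_1^*$ chosen so that $W^{m,p} \hookrightarrow L^{q_1}$ is a subcritical Sobolev embedding to which Theorem~\ref{thm:lieb} (with exponent $m$ in place of $1$, obtained by iterating Theorem~\ref{thm:lieb} or by the same Lieb/Lions argument) applies, and so that the complex interpolation space $[L^{p_0}, L^{q_1}]_\theta = L^q$ by the classical interpolation theorem for $L^p$ spaces. One verifies $A_j \hookrightarrow B_j$ continuously for $j=0,1$.

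The second ingredient is the mollifier family. Here I would use the classical Friedrichs mollifiers $M_t u = u * \rho_t$, where $\rho \in C_c^\infty(\mathbb{R}^N)$ with $\int \rho = 1$ and $\rho_t(x) = t^{-N}\rho(x/t)$. Condition (i) of Definition~\ref{def:mollifiers} is Young's inequality: convolution with an $L^1$ function of unit mass is a contraction on every $W^{k,p}$ and in particular maps $W^{m,p}$ boundedly into $W^{m,p}$ with norm $\le 1$, and likewise on $L^p$; moreover $M_t$ maps $L^p$ into $W^{m,p}$ boundedly (not uniformly in $t$, which is fine — only boundedness for each fixed $t$ is required for $M_t$ itself, while the uniform bound is imposed only on the $A_j \to A_j$ norms). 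Condition (iii), $\mathcal{D}$-covariance, holds \emph{exactly} because convolution commutes with translations: $g_y M_t = M_t g_y$ for every lattice shift $g_y$, so we may take $h_{g,t} = g$. The only condition requiring real work is (ii): we need $\|I - M_t\|_{W^{m,p} \to B_1} \to 0$ as $t \to 0$, where $B_1 = L^{q_1}$. Since $W^{m,p} \hookrightarrow L^{q_1}$ continuously and mollification converges to the identity in $W^{m,p}$-operator norm is \emph{false} in general on $L^p$ but \emph{true} in the sense that $\|u - u*\rho_t\|_{W^{m-1,p}} \to 0$ uniformly on bounded subsets of $W^{m,p}$; more precisely one uses that $I - M_t \to 0$ in the operator norm from $W^{m,p}$ to $W^{m-1,p}$ (by a standard quantitative estimate $\|u - u*\rho_t\|_{L^p} \le t \,\|\rho\|_{?}\, \|\nabla u\|_{L^p}$ iterated), and then $W^{m-1,p} \hookrightarrow L^{q_1}$ provided $q_1 \le p_{m-1}^*$, i.e. provided $q_1 < p_1^*$ is chosen small enough relative to $m$; this last embedding is again Sobolev--Peetre. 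So one must choose $q_1$ (and $p_0$) at the outset compatibly with all three requirements: $[L^{p_0},L^{q_1}]_\theta = L^q$, subcritical $W^{m,p}\hookrightarrow L^{q_1}$, and $W^{m-1,p}\hookrightarrow L^{q_1}$.

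I expect the main obstacle to be exactly this juggling of exponents: one needs a single choice of $(m,p_0,q_1)$ making the complex interpolation identity produce the prescribed $q$ while simultaneously keeping $q_1$ in the subcritical range of \emph{both} $W^{m,p}$ and $W^{m-1,p}$, and this is where one uses that $q < p_\alpha^*$ strictly (the strict inequality gives the room to push $q_1$ below $p_{m-1}^*$). A short computation with the scaling relations $\frac1{p_k^*} = \frac1p - \frac kN$ shows this is always possible for $m$ chosen large enough; one can also simply take $m = \lceil \alpha \rceil$ or $m = \lceil\alpha\rceil + 1$ and check the arithmetic. Once Theorem~\ref{thm:main abstract} is applied, it yields cocompactness of $[L^p, W^{m,p}]_\theta \hookrightarrow [L^{p_0}, L^{q_1}]_\theta$, i.e. of $W^{\alpha,p} \hookrightarrow L^q$ after identifying both endpoints; the identification of norms up to equivalence is harmless for cocompactness (an equivalent norm on the target leaves convergence in norm unchanged, and an equivalent norm on the source leaves $\mathcal{D}$-weak convergence unchanged since it is defined via the weak topology, which is a purely topological notion — or one invokes Proposition~\ref{pro:3spaces} directly).

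For the second assertion, that $W^{\alpha+\gamma,p}(\mathbb{R}^N) \hookrightarrow W^{\gamma,q}(\mathbb{R}^N)$ is $\mathcal{D}_{\mathbb{Z}^N}$-cocompact for all $\gamma > 0$, I would run the same machinery one notch higher in smoothness. Take an integer $m > \alpha$ and $\theta$ with $\alpha = \theta m$ as before, but now use the couples $(A_0,A_1) = (W^{\gamma,p}, W^{\gamma+m,p})$ and $(B_0,B_1) = (W^{\gamma,p_0}, W^{\gamma,q_1})$ — i.e. shift everything up by $\gamma$ derivatives. The complex interpolation identities $[W^{\gamma,p}, W^{\gamma+m,p}]_\theta = W^{\gamma+\alpha,p}$ and $[W^{\gamma,p_0}, W^{\gamma,q_1}]_\theta = W^{\gamma,q}$ hold by the same Bessel-potential/lifting arguments (the Bessel potential $J^\gamma = (1-\Delta)^{-\gamma/2}$ is an isometric isomorphism of the whole scale and commutes with interpolation). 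The cocompactness of the endpoint embedding $W^{\gamma+m,p} \hookrightarrow W^{\gamma,q_1}$ needed to feed Theorem~\ref{thm:main abstract} follows by lifting Theorem~\ref{thm:lieb}: apply $J^{-\gamma}$ to reduce to cocompactness of $W^{m,p}\hookrightarrow L^{q_1}$, which is the iterated form of Theorem~\ref{thm:lieb}. The mollifier family is again $M_t u = u*\rho_t$, now viewed between the shifted spaces; conditions (i) and (iii) are unchanged, and (ii) reduces via the lifting to the previous case. The one extra point to check is that all the embeddings $A_j \hookrightarrow B_j$ remain continuous and that $q_1$ can still be chosen subcritically — again handled by the same exponent arithmetic, since shifting by $\gamma$ derivatives on all spaces does not disturb the differences of smoothness indices that control the Sobolev embeddings. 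Finally, apply Proposition~\ref{pro:3spaces} if needed to conclude with the standard (rather than merely equivalent) norms.
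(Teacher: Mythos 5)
There is a genuine gap in the exponent bookkeeping at the heart of your first part. You insist on choosing $(m,p_0,q_1)$ so that the interpolation identity hits the prescribed $q$ exactly: with $\theta=\alpha/m$ and $[L^{p},L^{q_1}]_{\theta}=L^{q}$ (note that you must take $p_0=p$, since $L^{p}\not\subset L^{p_0}$ on $\mathbb{R}^{N}$ for $p_0<p$), one is forced to take $\frac{1}{q_1}=\frac{1}{p}-\frac{m}{\alpha}\,\delta$ where $\delta=\frac1p-\frac1q$. Thus enlarging $m$ \emph{lowers} $\frac{1}{q_1}$, and finiteness of $q_1$ requires $m<\frac{\alpha}{p\delta}$, while your route to condition (ii) (operator-norm convergence $W^{m,p}\to W^{m-1,p}$ followed by $W^{m-1,p}\hookrightarrow L^{q_1}$) requires $q_1\le p_{m-1}^{*}$, i.e. $m\ge\frac{\alpha}{\alpha-N\delta}$. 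These two constraints are incompatible for every $m$ as soon as $\delta\ge\frac{\alpha}{N+p}$, i.e. for all $q$ sufficiently close to $p_{\alpha}^{*}$; so your claim that the arithmetic works ``for $m$ chosen large enough'' is false, and $m=\lceil\alpha\rceil$ fails outright when $\alpha<1$ (then $W^{m-1,p}=L^{p}$ does not embed into $L^{q_1}$ for $q_1>p$). Even dropping your $W^{m-1,p}$ detour and verifying (ii) more cleverly, there remain admissible data (e.g. $p_{\alpha}^{*}=\infty$ or $q$ near a finite $p_{\alpha}^{*}$) for which no integer $m\in(\alpha,\frac{\alpha}{p\delta})$ exists, so the ``hit $q$ exactly'' strategy cannot be completed. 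The missing idea is the paper's final upgrade step: apply the abstract theorem with the target couple $(L^{p},L^{r})$, $r$ close to $p$, to get cocompactness of $W^{\alpha,p}\subset L^{s_0}$ for \emph{one} exponent $s_0\in(p,q)$, and then pass to an arbitrary $q\in(p,p_{\alpha}^{*})$ by the elementary estimate $\|u_k\|_{q}\le\|u_k\|_{s_0}^{1-\beta}\|u_k\|_{s_1}^{\beta}$ with $s_1\in(q,p_{\alpha}^{*})$, using that $\{u_k\}$ is bounded in $L^{s_1}$ by the Sobolev--Peetre embedding. Correspondingly, condition (ii) is verified in the paper not through $W^{m-1,p}$ but by H\"older-interpolating the quantitative bound $\|u-M_tu\|_{L^{p}}\le Ct^{p}\|u\|_{W^{1,p}}$ against mere boundedness of $\|u-M_tu\|_{L^{s}}$ for $s<p^{*}$, which is exactly what removes the constraint that sank your arithmetic.

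Beyond this gap, your architecture is heavier than the paper's in two respects, and one of them hides the same missing trick. The paper only ever interpolates the single couple $(L^{p},W^{1,p})$ against $(L^{p},L^{r})$, so its only endpoint input is Lieb--Lions for $m=1$; the cases $\alpha\ge1$ (and the extension to all subcritical $q$) are handled by the same elementary embedding-plus-H\"older argument just described. Your plan instead needs cocompactness of $W^{m,p}\subset L^{q_1}$ for integer $m\ge2$ as input, which you wave at as ``iterating Theorem \ref{thm:lieb}''; the honest proof of that claim is precisely the elementary argument you are missing ($\mathcal{D}$-weak null in $W^{m,p}$ implies $\mathcal{D}$-weak null in $W^{1,p}$, hence convergence in some $L^{q_0}$, then H\"older against $L^{s_1}$-boundedness), so you cannot avoid it anyway. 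Finally, for the second assertion the paper simply observes that $\Lambda^{\gamma/2}=(I-\Delta)^{\gamma/2}$ is an isometry from $W^{\alpha+\gamma,p}$ onto $W^{\alpha,p}$ and from $W^{\gamma,q}$ onto $L^{q}$ commuting with lattice shifts, so the statement follows in one line from the first assertion; your rerun of the interpolation machinery with $\gamma$-shifted couples rests on the same lifting and is correct in principle, but superfluous once the first part is repaired.
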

We now state our third result, which is obtained by applying Theorem
\ref{thm:main abstract} to couples of Sobolev spaces, for which the
real interpolation method yields Besov spaces. (Relevant definitions
are recalled in Appendix A.) The continuity of the imbeddings considered
in this theorem is due to Jawerth \cite{Jawerth}.
\begin{thm}
\label{thm:cocoBesov}Suppose that $0<\beta<\alpha<\infty$ and $1<p_{0}<p_{1}<\infty$
and $q\in[1,\infty]$. If $\frac{N}{p_{0}}-\frac{N}{p_{1}}<\alpha-\beta$,
then the continuous imbedding of $B^{\alpha,p_{0},q}(\mathbb{R}^{N})$
into $B^{\alpha,p_{1},q}(\mathbb{R}^{N})$ is $\mathcal{D}_{\mathbb{Z}^{N}}$-cocompact.\end{thm}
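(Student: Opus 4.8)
The plan is to realize the imbedding $B^{\alpha,p_{0},q}(\mathbb{R}^{N})\subset B^{\beta,p_{1},q}(\mathbb{R}^{N})$ as the $(\theta,q)$ real interpolation of a \emph{cocompact} Sobolev--Peetre imbedding with a merely continuous one, and then to apply Theorem \ref{thm:main abstract}. Fix $\theta=1/2$ and abbreviate $d:=N/p_{0}-N/p_{1}\ge 0$ and $c:=\alpha-\beta$, so that the hypothesis reads $c>d$. For a small $\delta>0$ set $\beta_{0}:=\beta-\delta$, $\beta_{1}:=\beta+\delta$, $\alpha_{0}:=\beta_{0}+d$, $\alpha_{1}:=\beta_{1}+(2c-d)$. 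A one-line check gives $\alpha=\frac12(\alpha_{0}+\alpha_{1})$, $\beta=\frac12(\beta_{0}+\beta_{1})$, and, using $c>d$, that for $\delta$ small enough all four exponents are positive with $\alpha_{0}<\alpha_{1}$, $\beta_{0}\ne\beta_{1}$, $\alpha_{0}-\beta_{0}=d$ and $\alpha_{1}-\beta_{1}=2c-d>d$. Put $A_{0}:=W^{\alpha_{0},p_{0}}$, $A_{1}:=W^{\alpha_{1},p_{0}}$, $B_{0}:=W^{\beta_{0},p_{1}}$, $B_{1}:=W^{\beta_{1},p_{1}}$ and $\mathcal{D}=\mathcal{D}_{\mathbb{Z}^{N}}$.

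Next I would verify the hypotheses of Theorem \ref{thm:main abstract} for these couples. All four norms are translation invariant, so each lattice shift is an isometry of every $A_{j}$ and $B_{j}$, i.e.\ (\ref{eq:gauge}) holds for both couples. The inclusion $A_{1}\subset A_{0}$ is clear since $\alpha_{1}>\alpha_{0}$ and the integrability $p_{0}$ is the same. Applying the isomorphism $(1-\Delta)^{\beta_{j}/2}$, the continuity of $A_{j}\subset B_{j}$ is equivalent to $W^{\alpha_{j}-\beta_{j},p_{0}}\subset L^{p_{1}}$, which holds by the Sobolev--Peetre imbedding because $\alpha_{j}-\beta_{j}\ge d$ (for $j=0$ this is the borderline case $p_{1}=(p_{0})^{*}_{d}$, still covered). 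For the $\mathcal{D}_{\mathbb{Z}^{N}}$-cocompactness of $A_{1}=W^{\alpha_{1},p_{0}}\subset W^{\beta_{1},p_{1}}=B_{1}$, write $\alpha_{1}=\alpha'+\gamma$ with $\gamma:=\beta_{1}>0$ and $\alpha':=\alpha_{1}-\beta_{1}=2c-d$; since $\alpha'>d$ we have $p_{0}<p_{1}<(p_{0})^{*}_{\alpha'}$ (automatic when $N\le\alpha'p_{0}$), so the second assertion of Theorem \ref{thm:ournew} applies and gives the claimed cocompactness. Finally I would take as $\mathcal{D}$-covariant mollifier family on $(A_{0},A_{1})$ relative to $B_{1}$ the standard mollifiers $M_{t}$, convolution with $t^{-N}\varphi(\cdot/t)$ for a fixed $\varphi$ in the Schwartz class with $\widehat{\varphi}(0)=1$ --- the same family used for Theorem \ref{thm:ournew}. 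Condition (i) is the $t$-uniform boundedness of $M_{t}$ on each $W^{s,p}$ (a Mikhlin multiplier estimate); condition (iii) holds with $h_{g,t}=g$ because convolution commutes with translations; and condition (ii), $\|I-M_{t}\|_{W^{\alpha_{1},p_{0}}\to W^{\beta_{1},p_{1}}}\to0$ as $t\to 0$, is obtained by splitting the multiplier $1-\widehat{\varphi}(t\xi)$ at $|\xi|\sim R$: on low frequencies it tends to $0$ uniformly (continuity of $\widehat{\varphi}$ at $0$), while on high frequencies the \emph{strict} gap $\alpha_{1}-\beta_{1}>d$ provides $\varepsilon$ derivatives of slack that can be traded for the passage $L^{p_{0}}\to L^{p_{1}}$, so that part is small too.

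Granting these verifications, Theorem \ref{thm:main abstract} yields that $(A_{0},A_{1})_{\theta,q}$ is $\mathcal{D}_{\mathbb{Z}^{N}}$-cocompactly imbedded in $(B_{0},B_{1})_{\theta,q}$. Because $\alpha_{0}\ne\alpha_{1}$ and $\beta_{0}\ne\beta_{1}$, the classical real-interpolation identity for Bessel potential spaces sharing an integrability exponent (recalled in Appendix A) identifies these spaces as $B^{\alpha,p_{0},q}(\mathbb{R}^{N})$ and $B^{\beta,p_{1},q}(\mathbb{R}^{N})$ respectively, which is exactly the asserted cocompact imbedding. The step I expect to be the main obstacle is condition (ii) of Definition \ref{def:mollifiers}: the mollifier remainder has to be estimated between spaces that differ \emph{both} in smoothness and in integrability, and this is precisely where the subcriticality hypothesis $N/p_{0}-N/p_{1}<\alpha-\beta$ gets consumed; fortunately this is the same estimate already required for Theorem \ref{thm:ournew} and so can be cited rather than redone. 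A minor, purely arithmetical, point is to ensure the four auxiliary exponents can be picked to meet all the inequalities simultaneously while keeping $\alpha_{0}\ne\alpha_{1}$ and $\beta_{0}\ne\beta_{1}$, so that the interpolated spaces are genuine Besov spaces rather than Sobolev spaces.
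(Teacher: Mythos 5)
Your proposal is correct and follows the same architecture as the paper's own proof: realize the imbedding (with target $B^{\beta,p_{1},q}$ -- the ``$B^{\alpha,p_{1},q}$'' in the printed statement is evidently a typo) as the $(\tfrac12,q)$ real interpolation of imbeddings between the couples $\left(W^{\alpha_{0},p_{0}},W^{\alpha_{1},p_{0}}\right)$ and $\left(W^{\beta_{0},p_{1}},W^{\beta_{1},p_{1}}\right)$, obtain endpoint cocompactness from Theorem \ref{thm:ournew}, check the mollifier conditions, apply Theorem \ref{thm:main abstract}, and identify the interpolated spaces via (\ref{eq:besovbi}). The differences are minor but worth noting. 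The paper shifts symmetrically ($\alpha_{j}=\alpha\pm\epsilon$, $\beta_{j}=\beta\pm\epsilon$), so both endpoint imbeddings have gap $\alpha-\beta>N/p_{0}-N/p_{1}$ and are cocompact; you shift asymmetrically so that the $j=0$ endpoint is only the critical (merely continuous) Sobolev--Peetre imbedding and cocompactness is used only for $A_{1}\subset B_{1}$ -- both choices satisfy the hypotheses of Theorem \ref{thm:main abstract}, and yours respects the ordering $A_{1}\subset A_{0}$ literally. The one place you understate the work is condition (ii): the estimate $\lim_{t\to0}\Vert I-M_{t}\Vert_{W^{\alpha_{1},p_{0}}\to W^{\beta_{1},p_{1}}}=0$ is \emph{not} already contained in the proof of Theorem \ref{thm:ournew}, which only needs $\Vert I-M_{t}\Vert_{W^{1,p}\to L^{r}}\to0$ (Lemma \ref{lem:verified}); the fractional-index, two-integrability version is precisely the paper's Lemma \ref{lem:sigma}, which is the main component of Section 5 and is proved there by reducing to $m_{1}=0$ via $\Lambda^{m_{1}/2}$ and combining Lemma \ref{lem:verified} with Fact \ref{fac:holder} and Theorem \ref{thm:fund interp}. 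Your low/high-frequency splitting of the multiplier $1-\widehat{\varphi}(t\xi)$, trading the strict slack $\alpha_{1}-\beta_{1}-d>0$ for the passage $L^{p_{0}}\to L^{p_{1}}$ at high frequencies, is a viable alternative proof of that lemma, but it must actually be written out (uniform Mikhlin bounds for the truncated multipliers, etc.) rather than cited; with that done, your argument is complete.
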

\begin{cor}
\label{cor:cocoBesov}Let $\alpha$, $\beta$, $p_{0}$, $p_{1}$
and $N$ be as in Theorem \ref{thm:cocoBesov}. Then the imbedding
of $B^{\alpha,p_{0},q_{0}}(\mathbb{R}^{N})$ into $B^{\beta,p_{1},q_{1}}(\mathbb{R}^{N})$
is $\mathcal{D}_{\mathbb{Z}^{N}}$-cocompact whenever $1\le q_{0}\le q_{1}\le\infty$. 
\end{cor}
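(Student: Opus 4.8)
The plan is to reduce Corollary~\ref{cor:cocoBesov} to Theorem~\ref{thm:cocoBesov} by a two-step chaining argument, exploiting the elementary monotonicity inclusions between Besov spaces together with the three-space Proposition~\ref{pro:3spaces}. First I would record the two trivial continuous imbeddings that hold for Besov spaces with the given index ranges: the imbedding $B^{\alpha,p_0,q_0}(\mathbb{R}^N)\subset B^{\alpha,p_0,q_1}(\mathbb{R}^N)$ coming from $\ell^{q_0}\subset\ell^{q_1}$ when $q_0\le q_1$ (raising the summation exponent in the Besov norm), and the imbedding $B^{\beta,p_1,q_0}(\mathbb{R}^N)\subset B^{\beta,p_1,q_1}(\mathbb{R}^N)$ for the same reason. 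Both of these are plainly isometric for the group $\mathcal{D}_{\mathbb{Z}^N}$ of lattice shifts, since the Besov norms are translation invariant and the monotonicity is purely in the sequence-space parameter; in fact all the Besov spaces appearing here carry isometric $\mathcal{D}_{\mathbb{Z}^N}$-actions.

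Next I would invoke Theorem~\ref{thm:cocoBesov} to get that $B^{\alpha,p_0,q_0}(\mathbb{R}^N)$ is $\mathcal{D}_{\mathbb{Z}^N}$-cocompactly imbedded into $B^{\alpha,p_1,q_0}(\mathbb{R}^N)$: note that the hypotheses of Theorem~\ref{thm:cocoBesov} (namely $0<\beta<\alpha$, $1<p_0<p_1<\infty$, and $\frac{N}{p_0}-\frac{N}{p_1}<\alpha-\beta$) are precisely those assumed in the corollary, and that theorem is stated for an arbitrary value of the third Besov parameter $q\in[1,\infty]$, so we are free to take $q=q_0$. Then I would use the further trivial continuous imbedding $B^{\alpha,p_1,q_0}(\mathbb{R}^N)\subset B^{\beta,p_1,q_0}(\mathbb{R}^N)$, which holds because $\beta<\alpha$ (lowering the smoothness index), again isometrically for lattice shifts.

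Assembling these, I would apply Proposition~\ref{pro:3spaces} twice. In the first application, take $X_1=B^{\alpha,p_0,q_0}$, $X_2=B^{\alpha,p_1,q_0}$, $X_3=B^{\beta,p_1,q_0}$: the imbedding $X_1\subset X_2$ is $\mathcal{D}_{\mathbb{Z}^N}$-cocompact by Theorem~\ref{thm:cocoBesov}, $X_2\subset X_3$ is continuous, and $\mathcal{D}_{\mathbb{Z}^N}$ acts isometrically on all three, so $X_1\subset X_3=B^{\beta,p_1,q_0}$ is $\mathcal{D}_{\mathbb{Z}^N}$-cocompact. In the second application, take $X_1=B^{\alpha,p_0,q_0}$, $X_2=B^{\beta,p_1,q_0}$, $X_3=B^{\beta,p_1,q_1}$: now $X_1\subset X_2$ is the cocompact imbedding just obtained, $X_2\subset X_3$ is the $q_0\le q_1$ monotonicity imbedding, and again all actions are isometric, yielding that $B^{\alpha,p_0,q_0}(\mathbb{R}^N)\subset B^{\beta,p_1,q_1}(\mathbb{R}^N)$ is $\mathcal{D}_{\mathbb{Z}^N}$-cocompact, which is the claim.

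I do not expect any serious obstacle here; the only things to check carefully are the bookkeeping on the index ranges (so that every intermediate Besov space is well-defined and the hypotheses of Theorem~\ref{thm:cocoBesov} are met verbatim) and the verification that the shift group acts isometrically on each of the intermediate spaces with the chosen (standard) Besov norm — both of which are routine once the relevant definitions from Appendix~A are in hand. A cosmetic alternative is to route the chain through $B^{\beta,p_0,q_0}$ first (using $\beta<\alpha$ to drop smoothness, then applying Theorem~\ref{thm:cocoBesov} with parameters $\beta,p_0,p_1$ — but one must then check $\frac{N}{p_0}-\frac{N}{p_1}<\alpha-\beta$ still suffices, which it does not automatically, so the order chosen above, applying the cocompact step at smoothness $\alpha$, is the safe one).
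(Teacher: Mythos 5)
Your overall strategy --- chain one cocompact imbedding with trivial continuous imbeddings by means of Proposition \ref{pro:3spaces} --- is exactly the paper's, and in fact your second application of that proposition, with $X_{1}=B^{\alpha,p_{0},q_{0}}$, $X_{2}=B^{\beta,p_{1},q_{0}}$, $X_{3}=B^{\beta,p_{1},q_{1}}$, is verbatim the paper's proof of the corollary. The difficulty is your first link. You read the printed statement of Theorem \ref{thm:cocoBesov} literally, as asserting a cocompact imbedding $B^{\alpha,p_{0},q_{0}}(\mathbb{R}^{N})\subset B^{\alpha,p_{1},q_{0}}(\mathbb{R}^{N})$ at the \emph{same} smoothness $\alpha$. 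No such continuous imbedding exists when $p_{0}<p_{1}$: the differential-dimension condition underlying (\ref{eq:be2}) is not only sufficient but essentially necessary, so an imbedding $B^{s_{0},p_{0},q}(\mathbb{R}^{N})\subset B^{s,p_{1},q}(\mathbb{R}^{N})$ forces $s_{0}-N/p_{0}\ge s-N/p_{1}$, which with $s_{0}=s=\alpha$ would require $p_{1}\le p_{0}$ (one can also see the failure by testing with rescaled high-frequency bumps). The target space in the statement of Theorem \ref{thm:cocoBesov} is a misprint for $B^{\beta,p_{1},q}(\mathbb{R}^{N})$: its proof interpolates the couples $\left(W^{\alpha+\epsilon,p_{0}},W^{\alpha-\epsilon,p_{0}}\right)$ and $\left(W^{\beta+\epsilon,p_{1}},W^{\beta-\epsilon,p_{1}}\right)$ at $\theta=1/2$, and the hypothesis $\frac{N}{p_{0}}-\frac{N}{p_{1}}<\alpha-\beta$ is precisely what permits the drop in smoothness from $\alpha$ to $\beta$; it buys you nothing at fixed smoothness. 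So, as written, your first invocation of Proposition \ref{pro:3spaces} rests on an imbedding that fails.

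Once Theorem \ref{thm:cocoBesov} is read in its intended form (applied with $q=q_{0}$), your intermediate space $B^{\alpha,p_{1},q_{0}}$ and the smoothness-lowering step $B^{\alpha,p_{1},q_{0}}\subset B^{\beta,p_{1},q_{0}}$ (which is correct, since the couple $\left(L^{p_{1}},W^{s_{1},p_{1}}\right)$ is ordered, but unnecessary) can simply be deleted: you then have the cocompact imbedding $B^{\alpha,p_{0},q_{0}}\subset B^{\beta,p_{1},q_{0}}$ directly, and all that remains is the monotonicity imbedding $B^{\beta,p_{1},q_{0}}\subset B^{\beta,p_{1},q_{1}}$ coming from (\ref{eq:besovbi}) and (\ref{eq:monotonicity}), together with one application of Proposition \ref{pro:3spaces} --- i.e.\ exactly your second step, which is the paper's argument. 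Your caution about the alternative route through $B^{\beta,p_{0},q_{0}}$ was the right instinct; the same scrutiny applied to the first link would have exposed the misprint and shortened the proof.
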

This corollary follows immediately from Proposition \ref{pro:3spaces}.
We take $X_{1}=B^{\alpha,p_{0},q_{0}}$, $X_{2}=B^{\beta,p_{1},q_{0}}$
and $X_{3}=B^{\beta,p_{1},q_{1}}$. By Theorem \ref{thm:cocoBesov},
$X_{1}$ is $\mathcal{D}_{\mathbb{Z}^{N}}$-cocompactly imbedded into
$X_{2}$. The continuous imbedding $X_{2}\subset X_{3}$ follows from
(\ref{eq:besovbi}) and (\ref{eq:monotonicity}). 
\begin{thm}
\label{thm:Coco-Bes-Lp}Let $s>0,\;1<p<\infty,\: p<q_{0}\le q<p_{s}^{*}$.
Then the imbedding of $B^{s,p,q_{0}}(\mathbb{R}^{N})$ into $L^{q}(\mathbb{R}^{N})$
is $\mathcal{D}_{\mathbb{Z}^{N}}$-cocompact.
\end{thm}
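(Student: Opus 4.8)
The plan is to deduce Theorem~\ref{thm:Coco-Bes-Lp} from Theorem~\ref{thm:ournew} (or from Theorem~\ref{thm:cocoBesov} together with Theorem~\ref{thm:ournew}) by combining it with the abstract transitivity result, Proposition~\ref{pro:3spaces}, and with known continuous imbeddings among Besov and Sobolev spaces. The key point is that a Besov space $B^{s,p,q_0}(\mathbb{R}^N)$ sits between two fractional Sobolev spaces whose cocompact imbedding into $L^q(\mathbb{R}^N)$ has already been established.

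First I would fix $s>0$, $1<p<\infty$ and $p<q_0\le q<p_s^*$. Since $q<p_s^*$, I can choose a slightly smaller smoothness exponent $\alpha$ with $0<\alpha<s$ but still $q<p_\alpha^*$ (this is possible because $p_\alpha^*$ depends continuously on $\alpha$ and $q<p_s^*$ strictly; if $p_s^*=\infty$ one simply needs $\alpha$ large enough that $N\le\alpha p$ or $q<p\alpha N/(N-\alpha p)$, which again holds for $\alpha$ close to $s$). Then I would use the classical continuous imbeddings, recalled via (\ref{eq:besovbi}) and the monotonicity relations in Appendix~A, to sandwich the Besov space:
\[
W^{s,p}(\mathbb{R}^N)=B^{s,p,2}\text{ (for }p\text{ in a suitable range, or use }B^{s,p,\min(p,q_0)})\subset B^{s,p,q_0}(\mathbb{R}^N)\subset B^{\alpha,p,q_0}(\mathbb{R}^N)\subset W^{\alpha,p}(\mathbb{R}^N)\quad?
\]
— more carefully, I would use that $B^{s,p,q_0}(\mathbb{R}^N)\hookrightarrow B^{\alpha,p,\infty}(\mathbb{R}^N)\hookrightarrow W^{\alpha,p}(\mathbb{R}^N)$ whenever $\alpha<s$ (the last step via a coarser integrability exponent), so that $B^{s,p,q_0}$ embeds continuously into $W^{\alpha,p}(\mathbb{R}^N)$. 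All of these imbeddings are isometric or at least covariant with respect to the lattice-shift group $\mathcal{D}_{\mathbb{Z}^N}$, since every relevant norm is translation invariant. Now apply Theorem~\ref{thm:ournew} with smoothness parameter $\alpha$: since $p<q<p_\alpha^*$, the imbedding $W^{\alpha,p}(\mathbb{R}^N)\subset L^q(\mathbb{R}^N)$ is $\mathcal{D}_{\mathbb{Z}^N}$-cocompact. Then invoke Proposition~\ref{pro:3spaces} with $X_1=B^{s,p,q_0}(\mathbb{R}^N)$, $X_2=W^{\alpha,p}(\mathbb{R}^N)$ and $X_3=L^q(\mathbb{R}^N)$: since the imbedding $X_2\subset X_3$ is $\mathcal{D}_{\mathbb{Z}^N}$-cocompact and all three spaces carry isometric $\mathcal{D}_{\mathbb{Z}^N}$-actions, the imbedding $X_1\subset X_3$ is $\mathcal{D}_{\mathbb{Z}^N}$-cocompact, which is the assertion. (The hypothesis $p<q_0$ is what guarantees the continuous imbedding $B^{s,p,q_0}\subset L^q$ in the first place, via Jawerth-type imbeddings, so that the statement is not vacuous; the cocompactness then comes for free from the sandwiching argument.)

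The main obstacle I anticipate is purely bookkeeping: verifying that one can genuinely find $\alpha<s$ with $q<p_\alpha^*$ and with a clean continuous imbedding $B^{s,p,q_0}(\mathbb{R}^N)\hookrightarrow W^{\alpha,p}(\mathbb{R}^N)$ — i.e. pinning down the precise Besov–Sobolev comparison (using $W^{\alpha,p}=B^{\alpha,p,2}$ only when convenient, or else comparing third indices $q_0$ against $2$ and using monotonicity in the third index together with a small loss in smoothness to absorb the discrepancy). One must be slightly careful at the endpoint $q=q_0$ and in the case $N\le sp$ where $p_s^*=\infty$; in the latter case the choice of $\alpha$ only needs $q<\infty$, which is automatic, though one then needs $q<p_\alpha^*$ which forces either $N\le\alpha p$ or $\alpha$ close enough to $s$. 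Once the continuous imbedding into an appropriate $W^{\alpha,p}(\mathbb{R}^N)$ is in hand, the rest is an immediate appeal to Theorem~\ref{thm:ournew} and Proposition~\ref{pro:3spaces}, exactly as in the proof of Corollary~\ref{cor:cocoBesov}.
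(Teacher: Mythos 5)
Your overall strategy is correct and is genuinely different from the paper's. The paper first reduces to the case $q_{0}=q$ (via (\ref{eq:besovbi}), (\ref{eq:monotonicity}) and Proposition \ref{pro:3spaces}) and then runs the abstract interpolation theorem, Theorem \ref{thm:main abstract}, a second time: it interpolates the cocompact imbedding $W^{s_{0},p}\subset L^{r}$ between the couples $\left(L^{p},W^{s_{0},p}\right)$ and $\left(L^{p},L^{r}\right)$, with $s=\theta s_{0}$ and $\frac{1}{q}=\frac{1-\theta}{p}+\frac{\theta}{r}$, using the mollifier estimate of Lemma \ref{lem:sigma}, and identifies the resulting spaces as $B^{s,p,q}$ and $L^{q}$ via (\ref{eq:besovbi}) and (\ref{eq:lorentz}). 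You instead sandwich $B^{s,p,q_{0}}$ into a slightly less smooth space $W^{\alpha,p}$ with $\alpha<s$ chosen so that $q<p_{\alpha}^{*}$ (possible precisely because $q<p_{s}^{*}$ is strict), and then conclude from Theorem \ref{thm:ournew} and the transitivity Proposition \ref{pro:3spaces}, exactly as in Corollary \ref{cor:cocoBesov}. This is simpler --- no second pass through Theorem \ref{thm:main abstract} and no mollifier verification --- and it even yields the conclusion for every $q_{0}\in[1,\infty]$; its only cost is that it imports a Besov--Sobolev comparison that is not among the facts recorded in Appendix A, so it needs either a citation or a short interpolation argument.

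On that comparison, one link of your displayed chain is wrong as stated: $B^{\alpha,p,\infty}(\mathbb{R}^{N})\hookrightarrow W^{\alpha,p}(\mathbb{R}^{N})$ is false, since at equal smoothness the inclusion goes the other way, $W^{\alpha,p}=F_{p,2}^{\alpha}\subset B^{\alpha,p,\max(p,2)}\subset B^{\alpha,p,\infty}$ (and $W^{s,p}=B^{s,p,2}$ only for $p=2$). The repair is the one you yourself anticipate: keep a strict loss of smoothness in this step as well, e.g. $B^{s,p,q_{0}}\subset B^{s,p,\infty}\subset B^{\alpha,p,1}\subset W^{\alpha,p}$ for any $0<\alpha<s$; within the paper's framework this follows from the ordered-couple inclusions $(A_{0},A_{1})_{\theta,\infty}\subset(A_{0},A_{1})_{\theta',1}\subset[A_{0},A_{1}]_{\theta'}$ for $\theta'<\theta$, applied to $(A_{0},A_{1})=\left(L^{p},W^{m,p}\right)$ with $m>s$, together with (\ref{eq:besovbi}) and (\ref{eq:fss}). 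Since you already have the slack $\alpha<s$ at your disposal, this is a fixable slip rather than a gap: with the corrected chain, the isometric lattice-shift actions on all three spaces and the cocompactness of $W^{\alpha,p}\subset L^{q}$ from Theorem \ref{thm:ournew} make Proposition \ref{pro:3spaces} applicable, and the proof closes.
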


\section{The proof of Theorem \ref{thm:main abstract}}

We consider the case of real interpolation. The proof for the complex
case is completely analogous. 

In view of the continuous imbedding $\left(A_{0},A_{1}\right)_{\theta,q}\subset A_{0}+A_{1}=A_{0}$,
it follows that, for each fixed $t$, the operator $M_{t}$ is bounded
from $\left(A_{0},A_{1}\right)_{\theta,q}$ into $A_{1}$. Suppose
that $u_{k}\stackrel{\mathcal{D}}{\rightharpoonup}0$ in $(A_{0},A_{1})_{\theta,q}$.
Let $\left\{ g_{k}\right\} _{k\in\mathbb{N}}$ be an arbitrary sequence
in $\mathcal{D}$. Then \begin{equation}
g_{k}M_{t}u_{k}=M_{t}h_{g_{k},t}u_{k}\label{eq:wn}\end{equation}
by property (iii). Since $h_{g_{k},t}u_{k}\rightharpoonup0$ in $\left(A_{0},A_{1}\right)_{\theta,q}$,
we deduce that $M_{t}h_{g_{k},t}u_{k}\rightharpoonup0$ in $A_{1}$
for each fixed $t\in(0,1)$. The cocompactness of the imbedding $A_{1}\subset B_{1}$
and (\ref{eq:wn}) now imply that \begin{equation}
\lim_{k\to\infty}\Vert M_{t}u_{k}\Vert_{B_{1}}=0\,.\label{eq:2zero}\end{equation}

In view of the continuous inclusions $A_{j}\subset B_{j}$ and property
(i), we have that $M_{t}:A_{j}\to B_{j}$ is bounded with \begin{equation}
S_{j}:=\sup_{t\in(0,1)}\Vert M_{t}\Vert_{A_{j}\to B_{j}}<\infty\,,\ \mbox{for }j=0,1\,.\label{eq:bothbdd}\end{equation}

Since $M_{t}u_{k}\in B_{0}\cap B_{1}$, we can invoke (\ref{eq:drb})
in Appendix A and then (\ref{eq:bothbdd}) to obtain that \begin{eqnarray*}
\Vert M_{t}u_{k}\Vert_{\left(B_{0},B_{1}\right)_{\theta,q}} & \le & c_{\theta,q}\Vert M_{t}u_{k}\Vert_{B_{0}}^{1-\theta}\Vert M_{t}u_{k}\Vert_{B_{1}}^{\theta}\\
 & \le & c_{\theta,q}\left(S_{0}\Vert u_{k}\Vert_{A_{0}}\right)^{1-\theta}\Vert M_{t}u_{k}\Vert_{B_{1}}^{\theta}\,.\end{eqnarray*}
Since $\left\{ u_{k}\right\} _{k\in\mathbb{N}}$ is necessarily a
bounded sequence in the space $\left(A_{0},A_{1}\right)_{\theta,q}$
(by the Banach-Steinhaus Theorem), and is therefore also bounded in
the space $A_{0}$, we can use (\ref{eq:2zero}) to obtain that \begin{equation}
\lim_{k\to\infty}\Vert M_{t}u_{k}\Vert_{\left(B_{0},B_{1}\right)_{\theta,q}}=0\,.\label{eq:ftz}\end{equation}

We now consider the operator $I-M_{t}$ in more detail. By (\ref{eq:bothbdd})
we of course have $I-M_{t}:A_{0}\to B_{0}$ with $\left\Vert I-M_{t}\right\Vert _{A_{0}\to B_{0}}\le\left\Vert I\right\Vert _{A_{0}\to B_{0}}+S_{0}$.
Using this estimate, property (ii) and Theorem \ref{thm:fund interp},
we obtain that $I-M_{t}$ is a bounded operator from $\left(A_{0},A_{1}\right)_{\theta,q}$
into $\left(B_{0},B_{1}\right)_{\theta,q}$ and that \begin{eqnarray*}
\Vert I-M_{t}\Vert_{\left(A_{0},A_{1}\right)_{\theta,q}\to\left(B_{0},B_{1}\right)_{\theta,q}} & \le & \Vert I-M_{t}\Vert_{A_{0}\to B_{0}}^{1-\theta}\Vert I-M_{t}\Vert_{A_{1}\to B_{1}}^{\theta}\\
 & \le & \left(\Vert I\Vert_{A_{0}\to B_{0}}+S_{0}\right)^{1-\theta}\sigma(t)^{\theta}\,.\end{eqnarray*}
Therefore, with the help of (\ref{eq:ftz}), we have \begin{eqnarray*}
\limsup_{k\to\infty}\Vert u_{k}\Vert_{\left(B_{0},B_{1}\right)_{\theta,q}} & \le & \limsup_{k\to\infty}\Vert M_{t}u_{k}\Vert_{\left(B_{0},B_{1}\right)_{\theta,q}}+\limsup_{k\to\infty}\Vert(I-M_{t})u_{k}\Vert_{\left(B_{0},B_{1}\right)_{\theta,q}}\\
 & \le & 0+\limsup_{k\to\infty}\Vert(I-M_{t})u_{k}\Vert_{\left(B_{0},B_{1}\right)_{\theta,q}}\\
 & \le & \limsup_{k\to\infty}\left(\Vert I\Vert_{A_{0}\to B_{0}}+S_{0}\right)^{1-\theta}\sigma(t)^{\theta}\Vert u_{k}\Vert_{\left(A_{0},A_{1}\right)_{\theta,q}}\,.\end{eqnarray*}
We now use the boundedness of the sequence $\left\{ \Vert u_{k}\Vert_{\left(A_{0},A_{1}\right)_{\theta,q}}\right\} _{k\in\mathbb{N}}$
once more, together with property (ii), to obtain that this last expression
is bounded by a quantity which tends to $0$ as $t$ tends to $0$.
Since we can choose $t$ as small as we please, this shows that $\lim_{k\to\infty}\Vert u_{k}\Vert_{\left(B_{0},B_{1}\right)_{\theta,q}}=0$
and completes the proof of the theorem.$\qed$

\section{\label{sec:scc}Cocompactness of the imbedding $W^{\alpha,p}\subset L^{q}$
for all $\alpha\in(0,\infty)$}

In this section we give the proof of Theorem \ref{thm:ournew}.

Let $\Lambda$ be the operator $I-\Delta$ which of course corresponds
to the Fourier multiplier $1+\left|\xi\right|^{2}$. Note that $\Lambda$
commutes with all of the operators $g\in\mathcal{D}_{\mathbb{Z}^{N}}$,
as does each of its powers. Furthermore, $\Lambda^{\gamma/2}$ defines
an isometry between $W^{\alpha+\gamma,p}(\mathbb{R}^{N})$ and $W^{\alpha,p}(\mathbb{R}^{N})$
as well as between $W^{\gamma,q}(\mathbb{R}^{N})$ and $L^{q}(\mathbb{R}^{N})$.
Since $\mathcal{D}_{\mathbb{Z}^{N}}$-weak convergence is preserved
by each of these isometries and their inverses, we see that the $\mathcal{D}_{\mathbb{Z}^{N}}$-cocompactness
of the imbedding $W^{\alpha+\gamma,p}(\mathbb{R}^{N})\subset W^{\gamma,q}(\mathbb{R}^{N})$
is an immediate consequence of the $\mathcal{D}_{\mathbb{Z}^{N}}$-cocompactness
of the imbedding $W^{\alpha,p}(\mathbb{R}^{N})\subset L^{q}(\mathbb{R}^{N})$
which we will now prove.

We begin by considering the case where $\alpha\in(0,1)$. Here we
apply Theorem \ref{thm:main abstract} to suitable Banach couples
of $L^{p}$ and Sobolev spaces. 

We present the first step as the following lemma. Here, as before,
$p^{*}=p_{1}^{*}$ is the critical exponent defined in (\ref{eq:dce}).
\begin{lem}
\label{lem:verified}Let $\left(A_{0},A_{1}\right)=\left(L^{p}(\mathbb{R}^{N}),W^{1,p}(\mathbb{R}^{N})\right)$
and $(B_{0},B_{1})=\left(L^{p}(\mathbb{R}^{N}),L^{r}(\mathbb{R}^{N})\right)$
with $r\in(p,p^{*})$. Let $\mathcal{D}=\mathcal{D}_{\mathbb{Z}^{N}}$. 

Let $\rho:\mathbb{R}^{N}\to[0,\infty)$ be a $C^{\infty}$ function
with support contained in the open unit ball $\left\{ z\in\mathbb{R}^{N}:\left|z\right|<1\right\} $
which satisfies $\int_{\mathbb{R^{N}}}\rho(x)dx=1$. 

Then, for each fixed $t\in(0,1)$ the operator $M_{t}$, which is
defined by \begin{equation}
\left(M_{t}u\right)(x)=\int_{|z|<1}\rho(z)u(x+tz)dz\,,\label{eq:M_t}\end{equation}
is a bounded map of $A_{0}$ into $A_{1}$, and the family $\left\{ M_{t}\right\} _{t\in(0,1)}$
satisfies properties (i), (ii) and (iii) of Definition \ref{def:mollifiers}.
\end{lem}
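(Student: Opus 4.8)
The plan is to verify each of the three conditions (i), (ii), (iii) of Definition~\ref{def:mollifiers} directly for the mollifier family $M_t$ defined in \eqref{eq:M_t}, with $B_1 = L^r(\mathbb{R}^N)$ playing the role of the intermediate space. First I would observe that $M_t u = u \ast \rho_t$ where $\rho_t(x) = t^{-N}\rho(x/t)$ is the standard rescaled mollifier (after the change of variables $z \mapsto z/t$ in \eqref{eq:M_t}), so the whole argument is the classical theory of mollifiers in $L^p$ and Sobolev spaces, repackaged. The fact that $M_t$ maps $A_0 = L^p$ into $A_1 = W^{1,p}$ for each fixed $t$ is immediate from Young's inequality applied to $u\ast\rho_t$ and $u\ast(\partial_j\rho_t)$, since $\rho_t$ and its derivatives are in $L^1$ (indeed in every $L^s$) for each fixed $t>0$.

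For \textbf{property (i)}, I would use Young's convolution inequality: $\|M_t u\|_{L^p} = \|u\ast\rho_t\|_{L^p} \le \|\rho_t\|_{L^1}\|u\|_{L^p} = \|u\|_{L^p}$ since $\int\rho_t = 1$, so $\|M_t\|_{L^p\to L^p}\le 1$ uniformly in $t$; and since convolution with $\rho_t$ commutes with $\partial_j$, the same bound gives $\|M_t\|_{W^{1,p}\to W^{1,p}}\le 1$. For \textbf{property (iii)}, I would note that $\mathcal{D}_{\mathbb{Z}^N}$ is the group of integer translations and that translation commutes with convolution: $g_y(u\ast\rho_t) = (g_y u)\ast\rho_t$, i.e. $g_y M_t = M_t g_y$, so one may simply take $h_{g_y,t} = g_y$. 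This is the easiest of the three conditions.

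The \textbf{main work}, and the step I expect to be the real content, is \textbf{property (ii)}: showing $\sigma(t) := \|I - M_t\|_{W^{1,p}(\mathbb{R}^N)\to L^r(\mathbb{R}^N)} \to 0$ as $t\to 0$, where $p < r < p^*$. This is a quantitative (operator-norm, not merely pointwise) version of the classical approximation-by-mollification statement, combined with the Sobolev embedding. I would proceed in two stages. First, for $u \in W^{1,p}$, estimate $\|u - M_t u\|_{L^p}$: writing $(u - M_t u)(x) = \int \rho(z)\bigl(u(x) - u(x+tz)\bigr)\,dz$ and using the elementary inequality $\|u(\cdot) - u(\cdot + h)\|_{L^p} \le |h|\,\|\nabla u\|_{L^p}$ (first for smooth $u$ via the fundamental theorem of calculus, then by density), one gets $\|u - M_t u\|_{L^p} \le t\,\|\nabla u\|_{L^p} \le t\,\|u\|_{W^{1,p}}$; the same bound applied to $\nabla u \in L^p$ (componentwise, using $\nabla M_t u = M_t \nabla u$) would \emph{not} control $\|u - M_t u\|_{W^{1,p}}$ in terms of the $W^{1,p}$ norm of $u$ alone without a second derivative, so instead I use the Sobolev--Gagliardo--Nirenberg interpolation: since $p < r < p^*$, there is $\vartheta\in(0,1)$ with $\|v\|_{L^r} \le C\|v\|_{L^p}^{1-\vartheta}\|v\|_{W^{1,p}}^{\vartheta}$ (this is exactly the interpolation inequality underlying the embedding $W^{1,p}\subset L^r$ for subcritical $r$, obtainable e.g. from $\|v\|_{L^{p^*}}\le C\|v\|_{W^{1,p}}$ together with H\"older between $L^p$ and $L^{p^*}$). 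Applying this to $v = u - M_t u$ and combining with $\|u - M_t u\|_{L^p}\le t\|u\|_{W^{1,p}}$ and $\|u - M_t u\|_{W^{1,p}}\le (1 + \|I\|_{W^{1,p}\to W^{1,p}} + 1)\|u\|_{W^{1,p}} \le 3\|u\|_{W^{1,p}}$ from property (i), one obtains $\|u - M_t u\|_{L^r}\le C\, t^{1-\vartheta}\,\|u\|_{W^{1,p}}$, hence $\sigma(t)\le C t^{1-\vartheta}\to 0$. The only subtlety to be careful about is the density argument for the translation estimate and the precise form of the subcritical interpolation inequality; everything else is routine. This completes the verification of Definition~\ref{def:mollifiers} and hence the lemma.
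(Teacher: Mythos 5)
Your proof is correct and follows essentially the same route as the paper: properties (i) and (iii) via Young's inequality and the commutation of lattice translations with convolution (with $h_{g,t}=g$), and property (ii) via the estimate $\|u-M_tu\|_{L^p}\le Ct\|\nabla u\|_{L^p}$ combined with H\"older/Sobolev interpolation up to $L^r$. The one small caution is your suggested derivation of the interpolation inequality through $L^{p^*}$: when $N=p$ the convention gives $p^*=\infty$ and $W^{1,p}(\mathbb{R}^N)\not\subset L^\infty(\mathbb{R}^N)$, which is exactly why the paper interpolates between $L^p$ and a \emph{finite} exponent $s\in(r,p^*)$ instead; with that minor adjustment your argument coincides with the paper's.
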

\textit{Proof.} The boundedness of $M_{t}$ from $A_{0}$ into $A_{1}$
for each fixed $t$ is simply the well known mollification property.
It is also obvious that $M_{t}:A_{j}\to A_{j}$ is bounded with $\left\Vert M_{t}\right\Vert _{A_{j}\to A_{j}}\le1$
for $j=0,1$ and all $t\in(0,1)$, which gives property (i). 

Property (iii) is an immediate consequence of the fact that $(M_{t}u)(\cdot-y)=M_{t}(u(\cdot-y))$
for each $y\in\mathbb{R}^{N}$. In fact here we can take $h_{g,t}=g$
for each $g\in\mathcal{D}_{\mathbb{Z}^{N}}$ and each $t$. 

It remains to prove property (ii). Consider the following identity:
\[
u(x)-M_{t}u(x)=\int_{|z|<1}\rho(z)[u(x)-u(x+tz)]dz=-\int_{|z|<1}\rho(z)\int_{0}^{t}z\cdot\nabla u(x+sz)dsdz.\]

Then \[
|u(x)-M_{t}u(x)|^{p}\le\sup_{\left|y\right|<1}\rho(y)^{p}\left|\int_{0}^{t}\int_{|z|<1}|\nabla u(x+sz)|dz\, ds\right|^{p}.\]

By Hölder's inequality we then have \[
|u(x)-M_{t}u(x)|^{p}\le Ct^{p/p'}\int_{0}^{t}\int_{|z|<1}|\nabla u(x+sz)|^{p}dz\, ds.\]

Integrating with respect to $x$, we obtain \begin{eqnarray}
\int_{\mathbb{R}^{N}}|u(x)-M_{t}u(x)|^{p}dx & \le & Ct^{p/p'}\int_{0}^{t}\int_{|z|<1}\int_{\mathbb{R}^{N}}|\nabla u(x+sz)|^{p}dx\, dz\, ds\nonumber \\
 & = & Ct^{p/p'}\int_{0}^{t}\int_{|z|<1}\int_{\mathbb{R}^{N}}|\nabla u(x)|^{p}dx\, dz\, ds\nonumber \\
 & = & Ct^{1+p/p'}\int_{\mathbb{R}^{N}}|\nabla u(x)|^{p}dx\,.\label{eq:nn}\end{eqnarray}

Here, and also later, we will use the following immediate consequence
of Hölder's inequality:
\begin{fact}
\label{fac:holder}The inclusion $L^{p_{0}}\cap L^{p_{1}}\subset L^{p}$
holds whenever $1\le p_{0}<p<p_{1}\le\infty$. Furthermore, the estimate
\begin{equation}
\left\Vert f\right\Vert _{p}\le\left\Vert f\right\Vert _{p_{0}}^{1-\theta}\left\Vert f\right\Vert _{p_{1}}^{\theta}\label{eq:hold}\end{equation}
holds for each $f\in L^{p_{0}}\cap L^{p_{1}}$, where $\theta=\Theta(p_{0},p,p_{1}):={\displaystyle \frac{\frac{1}{p_{0}}-\frac{1}{p}}{\frac{1}{p_{0}}-\frac{1}{p_{1}}}}\in(0,1)$.
\end{fact}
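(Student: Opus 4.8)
The plan is to obtain both assertions from the ordinary three-line application of H\"older's inequality, once the elementary arithmetic of $\theta$ has been recorded. First I would note that the hypothesis $p_{0}<p<p_{1}$, that is, $\frac{1}{p_{1}}<\frac{1}{p}<\frac{1}{p_{0}}$, makes both the numerator and the denominator of $\Theta(p_{0},p,p_{1})$ positive, with the numerator strictly smaller, so indeed $\theta\in(0,1)$. Moreover the formula for $\theta$ is arranged precisely so that the convexity relation $\frac{1}{p}=\frac{1-\theta}{p_{0}}+\frac{\theta}{p_{1}}$ holds (with the convention $\frac{1}{p_{1}}=0$ when $p_{1}=\infty$); this identity is the only place where the explicit value of $\theta$ enters.

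Next, in the main case $1\le p_{0}<p<p_{1}<\infty$, I would fix $f\in L^{p_{0}}\cap L^{p_{1}}$ and use the pointwise factorization $|f|^{p}=|f|^{p(1-\theta)}\cdot|f|^{p\theta}$ together with H\"older's inequality for the pair of exponents $\frac{p_{0}}{p(1-\theta)}$ and $\frac{p_{1}}{p\theta}$. These exponents are positive, and they are conjugate exactly because of the convexity relation above, since $\frac{p(1-\theta)}{p_{0}}+\frac{p\theta}{p_{1}}=p\bigl(\frac{1-\theta}{p_{0}}+\frac{\theta}{p_{1}}\bigr)=1$. H\"older then gives $\int_{\mathbb{R}^{N}}|f|^{p}\le\bigl(\int_{\mathbb{R}^{N}}|f|^{p_{0}}\bigr)^{p(1-\theta)/p_{0}}\bigl(\int_{\mathbb{R}^{N}}|f|^{p_{1}}\bigr)^{p\theta/p_{1}}$, and taking $p$-th roots yields exactly (\ref{eq:hold}).

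For the endpoint $p_{1}=\infty$ one has $\theta=1-p_{0}/p$, hence $p(1-\theta)=p_{0}$, and I would argue directly from the a.e.\ bound $|f|^{p\theta}\le\|f\|_{\infty}^{p\theta}$: this gives $\int_{\mathbb{R}^{N}}|f|^{p}\le\|f\|_{\infty}^{p\theta}\int_{\mathbb{R}^{N}}|f|^{p_{0}}$, and, since $p_{0}/p=1-\theta$, taking $p$-th roots again produces (\ref{eq:hold}). In either case the right-hand side of (\ref{eq:hold}) is finite as soon as $f\in L^{p_{0}}\cap L^{p_{1}}$, which is precisely the asserted inclusion $L^{p_{0}}\cap L^{p_{1}}\subset L^{p}$. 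There is no genuine obstacle here; the argument is entirely routine, and the only points deserving a moment's attention are the verification that the two H\"older exponents are conjugate and the separate handling of the $p_{1}=\infty$ endpoint.
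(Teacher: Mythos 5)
Your argument is correct and is exactly the route the paper intends: the paper states this Fact without proof as an ``immediate consequence of H\"older's inequality,'' and your write-up simply spells out that standard application (the factorization $|f|^{p}=|f|^{p(1-\theta)}|f|^{p\theta}$ with conjugate exponents $p_{0}/(p(1-\theta))$ and $p_{1}/(p\theta)$, plus the easy $p_{1}=\infty$ endpoint). Nothing further is needed.
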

Let $s$ be some number satisfying $r<s<p^{*}$. Then $p<r<s$ and
so Fact \ref{fac:holder} gives us that \begin{equation}
\|u-M_{t}u\|_{r}\le\|u-M_{t}u\|_{p}^{1-\theta}\|u-M_{t}u\|_{s}^{\theta}\,,\ \mbox{where }\theta=\frac{\frac{1}{p}-\frac{1}{r}}{\frac{1}{p}-\frac{1}{s}}\in(0,1)\,.\label{eq:hsb}\end{equation}
We estimate $\left\Vert u-M_{t}u\right\Vert _{p}$ and $\left\Vert u-M_{t}u\right\Vert _{s}$
using, respectively, (\ref{eq:nn}) and the Sobolev imbedding theorem.
Substituting these estimates in (\ref{eq:hsb}), and noting that $1+p/p'=p$,
we obtain that \begin{eqnarray*}
\|u-M_{t}u\|_{r} & \le & C\left(t^{p}\left\Vert u\right\Vert _{W^{1,p}}\right)^{1-\theta}\left(\left\Vert u-M_{t}u\right\Vert _{W^{1,p}}\right)^{\theta}\\
 & \le & Ct^{(1-\theta)p}\left\Vert u\right\Vert _{W^{1,p}}\,.\end{eqnarray*}
This establishes property (ii) and completes the proof of the lemma.
\hfill{}$\qed$

We will now prove the assertion of Theorem \ref{thm:ournew} for $\alpha\in(0,1)$
and for some particular value of $q\in(p,p_{\alpha}^{*})$. 

For the number $p\in(1,\infty)$ appearing in the statement of Theorem
\ref{thm:ournew}, and for some number $r$ in $(p,p^{*})$ we let
$\left(A_{0},A_{1}\right)$ and $\left(B_{0},B_{1}\right)$ be the
same couples $\left(L^{p}(\mathbb{R}^{N}),W^{1,p}(\mathbb{R}^{N})\right)$
and $\left(L^{p}(\mathbb{R}^{N}),L^{r}(\mathbb{R}^{N})\right)$ which
appear in Lemma \ref{lem:verified}. Let us also choose the group
$\mathcal{D}$ and the family of operators $\left\{ M_{t}\right\} _{t\in(0,1)}$
to be as in Lemma \ref{lem:verified}. 

We know, using Theorem \ref{thm:lieb}, that that $A_{1}$ is $\mathcal{D}_{\mathbb{Z}^{N}}$-cocompactly
imbedded in $B_{1}$. This, together with Lemma \ref{lem:verified},
provides us with all the conditions required for applying Theorem
\ref{thm:main abstract} in this context. More specifically, if we
invoke the statement about complex interpolation spaces in Theorem
\ref{thm:main abstract}, we obtain that $[L^{p}(\mathbb{R}^{N}),W^{1,p}(\mathbb{R}^{N})]_{\theta}$
is $\mathcal{D}_{\mathbb{Z}^{N}}$-cocompactly imbedded in $[L^{p}(\mathbb{R}^{N}),L^{r}(\mathbb{R}^{N})]_{\theta}$
for each $\theta\in(0,1)$. By standard results (see Appendix A),
these two spaces are $W^{\theta,p}(\mathbb{R}^{N})$ and $L^{s_{0}}(\mathbb{R}^{N})$
respectively, where $s_{0}$ is the number in the interval $(p,r)$
given by \begin{equation}
\frac{1}{s_{0}}=\frac{1-\theta}{p}+\frac{\theta}{r}\,.\label{eq:spr}\end{equation}

Setting $\theta=\alpha$, we see that this establishes our result
for $q=s_{0}$. It will now be easy to extend the proof to all $q\in(p,p_{\alpha}^{*})$:

Let $\left\{ u_{k}\right\} _{k\in\mathbb{N}}$ be an arbitrary sequence
in $W^{\alpha,p}$ which converges $\mathcal{D}_{\mathbb{Z}^{N}}$-weakly
to $0$.

Given an arbitrary $q$ in $\left(p,p_{\alpha}^{*}\right)$ we choose
$r\in(p,p^{*})$ sufficiently close to $p$ so that the number $s_{0}$
given by (\ref{eq:spr}), with $\theta=\alpha$, satisfies $p<s_{0}<q$.
By the previous step of our argument we also have that $\lim_{k\to\infty}\left\Vert u_{k}\right\Vert _{L^{s_{0}}(\mathbb{R}^{N})}=0$.
Now let us choose some number $s_{1}\in(q,p_{\alpha}^{*})$. By the
Sobolev imbedding theorem, the sequence $\left\{ u_{k}\right\} _{k\in\mathbb{N}}$,
which is bounded in $W^{1,p}(\mathbb{R}^{N})$, must also be bounded
in $L^{s_{1}}(\mathbb{R}^{N})$. Finally, we use Fact \ref{fac:holder}
to bound $\left\Vert u_{k}\right\Vert _{q}$ by $\left\Vert u_{k}\right\Vert _{s_{0}}^{1-\beta}\left\Vert u_{k}\right\Vert _{s_{1}}^{\beta}$
for a suitable number $\beta\in(0,1)$. This suffices to complete
the proof of Theorem \ref{thm:ournew} for the case $\alpha\in(0,1)$. 

The case where $\alpha=1$ is of course Theorem \ref{thm:lieb}. So
it remains to deal with the easy case where $\alpha>1$. 

Let $p$ and $q$ be as in the statement of the theorem. Noting that
we always have $p<p^{*}$, let us choose numbers $q_{0}$ and $q_{1}$
which satisfy $p<q_{0}<\min\left\{ p^{*},q\right\} $ and $q<q_{1}<p_{\alpha}^{*}$.
Consider an arbitrary sequence $\left\{ u_{k}\right\} _{k\in\mathbb{N}}$
in $W^{\alpha,p}\left(\mathbb{R}^{N}\right)$ which is $\mathcal{D}_{\mathbb{Z}^{N}}$-weakly
convergent to zero. Since in this case $W^{\alpha,p}(\mathbb{R}^{N})$
is continuously imbedded into $W^{1,p}(\mathbb{R}^{N})$, we have
that $u_{k}(\cdot-y_{k})\rightharpoonup0$ in $W^{1,p}(\mathbb{R}^{N})$
for any sequence $\left\{ y_{k}\right\} _{k\in\mathbb{N}}$ of elements
of $\mathbb{Z}^{N}$, i.e., $u_{k}$ is $\mathcal{D}_{\mathbb{Z}^{N}}$-weakly
convergent in $W^{1,p}(\mathbb{R}^{N})$. Then, by Theorem \ref{thm:lieb},
$\lim_{k\to\infty}\left\Vert u_{k}\right\Vert _{q_{0}}=0$. 

Since $q_{0}<q<q_{1}$, Fact \ref{fac:holder} gives us that 

\begin{equation}
\|u_{k}\|_{q}\le\|u_{k}\|_{q_{0}}^{1-\theta}\|u_{k}\|_{q_{1}}^{\theta}\,,\ \mbox{where }\theta=\frac{\frac{1}{q_{0}}-\frac{1}{q}}{\frac{1}{q_{0}}-\frac{1}{q_{1}}}\in(0,1)\,.\label{eq:holderstuff}\end{equation}
Then, since $W^{\alpha,p}(\mathbb{R}^{N})$ is continuously imbedded
into $L^{q_{1}}(\mathbb{R}^{N})$, we have
$\|u_{k}\|_{q}\le C\|u_{k}\|_{q_{0}}^{1-\theta}\|u_{k}\|_{W^{\alpha,p}}^{\theta}$.
Since (again by the Banach-Steinhaus Theorem) weakly convergent sequences
are bounded, we obtain that $\|u_{k}\|_{q}\le C\|u_{k}\|_{q_{0}}^{\theta}\to0$.
\hfill{}$\qed$

\section{Cocompact imbeddings of Besov spaces and the proofs of Theorems \ref{thm:cocoBesov}
and \ref{thm:Coco-Bes-Lp}}

The following lemma will be the main component of the proof of Theorem
\ref{thm:cocoBesov}.
\begin{lem}
\label{lem:sigma}Suppose that $m_{0},m_{1}\in\mathbb{R}$, $0\le m_{1}<m_{0}$,
$1<p_{0}<p_{1}<\infty$, and assume further that \begin{equation}
\frac{1}{p_{0}}-\frac{1}{p_{1}}<\frac{m_{0}-m_{1}}{N}\,.\label{eq:copo}\end{equation}
For each $t\in(0,1)$, the operator $M_{t}$ defined by (\ref{eq:M_t})
is a bounded map from $W^{m_{0},p_{0}}(\mathbb{R}^{N})$ to $W^{m_{1},p_{1}}(\mathbb{R}^{N})$
and satisfies 

\begin{equation}
\lim_{t\to0}\|I-M_{t}\|_{W^{m_{0},p_{0}}(\mathbb{R}^{N})\to W^{m_{1},p_{1}}(\mathbb{R}^{N})}=0\,.\label{eq:sigma}\end{equation}
\end{lem}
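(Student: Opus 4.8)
The plan is to reduce the estimate \eqref{eq:sigma} to the scalar case $\alpha\in(0,1)$ already treated in Lemma \ref{lem:verified}, by exploiting the fact that the Bessel-potential operator $\Lambda^{s/2}=(I-\Delta)^{s/2}$ commutes with $M_t$. First I would observe that $M_t$ is a Fourier multiplier operator, with symbol $\widehat{\rho}(t\xi)$, and hence commutes with every $\Lambda^\sigma$; consequently, for any $s\ge 0$,
\[
\|I-M_t\|_{W^{m_0,p_0}\to W^{m_1,p_1}}
=\|\Lambda^{m_1/2}(I-M_t)\Lambda^{-m_1/2}\|_{W^{m_0-m_1,p_0}\to L^{p_1}}
=\|I-M_t\|_{W^{m_0-m_1,p_0}\to L^{p_1}}\,,
\]
since $\Lambda^{m_1/2}$ is an isometry from $W^{m_1,p_1}$ onto $L^{p_1}$ and from $W^{m_0,p_0}$ onto $W^{m_0-m_1,p_0}$. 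So it suffices to prove \eqref{eq:sigma} in the case $m_1=0$, i.e. to show $\lim_{t\to0}\|I-M_t\|_{W^{m,p_0}\to L^{p_1}}=0$ whenever $m:=m_0-m_1>0$, $1<p_0<p_1<\infty$ and $\frac{1}{p_0}-\frac{1}{p_1}<\frac{m}{N}$. (The boundedness of $M_t$ between these spaces for fixed $t$ is the standard mollification/multiplier statement and will be recorded first.)

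For the reduced estimate I would split into two regimes. If $m\ge 1$, then $W^{m,p_0}\subset W^{1,p_0}$ continuously, so $\|I-M_t\|_{W^{m,p_0}\to L^{p_1}}\le C\|I-M_t\|_{W^{1,p_0}\to L^{p_1}}$, and the right-hand side is exactly the quantity shown to tend to $0$ in the proof of Lemma \ref{lem:verified} (with $p_0$ in the role of $p$ there, and $r=p_1$; note $\frac1{p_0}-\frac1{p_1}<\frac1N\le\frac mN$ forces $p_1<p_0^{*}$, so the hypotheses there are met). If $0<m<1$, I would run the argument of Lemma \ref{lem:verified} directly with the fractional smoothness $m$ in place of $1$: interpolating $L^{p_1}$ between $L^{p_0}$ and $L^{s}$ for a suitable $s$ with $p_1<s<p_0^{**}$, where $\frac{1}{p_0^{**}}=\frac{1}{p_0}-\frac{m}{N}$ (possible precisely because of \eqref{eq:copo}), via Fact \ref{fac:holder} one gets
\[
\|u-M_tu\|_{p_1}\le\|u-M_tu\|_{p_0}^{1-\theta}\,\|u-M_tu\|_{s}^{\theta}\,.
\]
The second factor is bounded by $C\|u-M_tu\|_{W^{m,p_0}}\le C'\|u\|_{W^{m,p_0}}$ using the fractional Sobolev embedding $W^{m,p_0}\subset L^{s}$ and property (i)-type uniform bounds on $M_t$. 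For the first factor one needs a quantitative bound $\|u-M_tu\|_{p_0}\le C\,t^{m}\,\|u\|_{W^{m,p_0}}$; this is the fractional analogue of \eqref{eq:nn}, and I would prove it either on the Fourier side — writing $\widehat{u}-\widehat{M_tu}=(1-\widehat\rho(t\xi))\widehat u$ and using $|1-\widehat\rho(t\xi)|\le C\min\{1,t|\xi|\}\le C(t|\xi|)^{m}\le C t^{m}(1+|\xi|^2)^{m/2}$ together with the $L^{p_0}$-boundedness of the multiplier $(t|\xi|)^m/(1+|\xi|^2)^{m/2}$ uniformly in $t$ (Mikhlin) — or via the real-interpolation characterization of $W^{m,p_0}$ as an intermediate space between $L^{p_0}$ and $W^{1,p_0}$, combining the trivial bound $\|I-M_t\|_{L^{p_0}\to L^{p_0}}\le C$ with the bound $\|I-M_t\|_{W^{1,p_0}\to L^{p_0}}\le Ct$ from \eqref{eq:nn}. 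Either way, combining the two factors gives $\|u-M_tu\|_{p_1}\le C t^{m(1-\theta)}\|u\|_{W^{m,p_0}}$, which yields \eqref{eq:sigma}.

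The main obstacle is the fractional decay estimate $\|u-M_tu\|_{p_0}\lesssim t^{m}\|u\|_{W^{m,p_0}}$ for non-integer $m\in(0,1)$: the elementary pointwise/Hölder argument used for \eqref{eq:nn} relies on the fundamental theorem of calculus and does not transcribe verbatim. I expect the cleanest route is the interpolation-theoretic one, treating $t\mapsto I-M_t$ as a family of operators that is $O(1)$ from $L^{p_0}$ to $L^{p_0}$ and $O(t)$ from $W^{1,p_0}$ to $L^{p_0}$, and invoking Theorem \ref{thm:fund interp} (or the corresponding statement recalled in Appendix A) with the couple $(L^{p_0},W^{1,p_0})$ whose intermediate space of exponent $m$ is $W^{m,p_0}$; this delivers $\|I-M_t\|_{W^{m,p_0}\to L^{p_0}}\le C\,t^{m}$ with no further computation. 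Everything else is a matter of bookkeeping with Fact \ref{fac:holder} and the Sobolev--Peetre embedding.
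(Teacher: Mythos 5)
Your reduction to $m_{1}=0$ via $\Lambda^{m_{1}/2}$ (which commutes with $M_{t}$) and your treatment of the range $0<m_{0}-m_{1}<1$ are sound and essentially coincide with the paper's argument; the paper interpolates $I-M_{t}$ between the couples $\left(L^{p_{0}},W^{1,p_{0}}\right)$ and $\left(L^{p_{0}},L^{r}\right)$, landing in some $L^{s_{0}}$ close to $L^{p_{0}}$, rather than extracting a $t^{m}$ rate into $L^{p_{0}}$ itself, but the two bookkeepings are equivalent. The genuine gap is in your case $m:=m_{0}-m_{1}\ge1$. You assert that (\ref{eq:copo}) gives $\frac{1}{p_{0}}-\frac{1}{p_{1}}<\frac{1}{N}$ and hence $p_{1}<p_{0}^{*}$, so that Lemma \ref{lem:verified} applies with $r=p_{1}$. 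But (\ref{eq:copo}) only gives $\frac{1}{p_{0}}-\frac{1}{p_{1}}<\frac{m}{N}$, and when $m>1$ this allows $p_{1}\ge p_{0}^{*}$: for instance $N=3$, $p_{0}=3/2$, $m_{0}=2$, $m_{1}=0$, $p_{1}=4$ satisfies (\ref{eq:copo}) although $p_{0}^{*}=3<p_{1}$. In that regime $W^{1,p_{0}}(\mathbb{R}^{N})$ does not embed into $L^{p_{1}}(\mathbb{R}^{N})$ at all, so the quantity $\|I-M_{t}\|_{W^{1,p_{0}}\to L^{p_{1}}}$ to which you reduce is infinite, and the inequality $\|I-M_{t}\|_{W^{m,p_{0}}\to L^{p_{1}}}\le C\|I-M_{t}\|_{W^{1,p_{0}}\to L^{p_{1}}}$ proves nothing.

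The repair is the same device you already use for $m<1$, and it is how the paper argues in its Case 1: from Lemma \ref{lem:verified} and the embedding $W^{m,p_{0}}\subset W^{1,p_{0}}$ you get $\|I-M_{t}\|_{W^{m,p_{0}}\to L^{r}}\to0$ for any fixed $r\in\left(p_{0},p_{0}^{*}\right)$. If $p_{1}<p_{0}^{*}$ you may indeed take $r=p_{1}$ and you are done. Otherwise choose $s\in\left(p_{1},(p_{0})_{m}^{*}\right)$ --- possible precisely because of (\ref{eq:copo}) --- and apply Fact \ref{fac:holder} with exponents $r<p_{1}<s$, bounding $\|(I-M_{t})f\|_{L^{s}}\le2C\|f\|_{W^{m,p_{0}}}$ by the Sobolev--Peetre embedding together with the uniform bound $\|M_{t}\|_{A_j\to A_j}\le1$; this gives $\|(I-M_{t})f\|_{L^{p_{1}}}\le\left(\|I-M_{t}\|_{W^{m,p_{0}}\to L^{r}}\right)^{1-\theta}C'\|f\|_{W^{m,p_{0}}}$, which tends to $0$ as $t\to0$. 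With that correction your proof matches the paper's.
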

\begin{proof}
We begin by observing that the conditions on $p_{0}$ and $p_{1}$
in the statement of the lemma are equivalent to \begin{equation}
1<p_{0}<p_{1}<(p_{0})_{m_{0}-m_{1}}^{*}\,.\label{eq:ecop}\end{equation}
(The notation here is as specified in (\ref{eq:dce}), and this equivalence
holds whether or not $(p_{0})_{m_{0}}^{*}$ is finite.). 

We shall make use once more of the operator $\Lambda=I-\Delta$ which
was introduced at the beginning of Section \ref{sec:scc}, noting
that $\Lambda$ and each of its powers all commute with all of the
operators $M_{t}$. 
{Since $\Lambda^{{m_{1}}/2}$
defines an isometry between $W^{{m_{1},p_{1}}}(\mathbb{R}^{N})$
and $L^{{p_{1}}}(\mathbb{R}^{N})$ as well as between 
$W^{{m_{0},p_{0}}}(\mathbb{R}^{N})$
and $W^{{m_{0}-m_{1},p_{0}}}(\mathbb{R}^{N})$,}
it suffices to prove the lemma in the case where the two parameters
$m_{0}$ and $m_{1}$ are replaced by $m_{0}^{\prime}=m_{0}-m_{1}$
and $m_{1}^{\prime}=m_{1}-m_{1}=0$, i.e. we can suppose that $m_{1}=0$.
Note that this {}``shift'' of the values of $m_{0}$ and $m_{1}$
does not change the stated conditions on $p_{0}$ and $p_{1}$. 

Case 1: Assume first that $m_{0}\ge1$. By Lemma \ref{lem:verified},
we have \begin{equation}
\lim_{t\to0}\|I-M_{t}\|_{W^{1,p_{0}}\to L^{r}}=0\ \mbox{for each }r\in\left(p_{0},(p_{0})^{*}\right)\,.\label{eq:oep}\end{equation}
This also implies that \begin{equation}
\lim_{t\to0}\|I-M_{t}\|_{W^{m_{0},p_{0}}\to L^{r}}=0\,.\label{eq:okfr}\end{equation}

Sub-case 1.1: If $p_{1}=r$ the lemma is proved. 

Sub-case 1.2: If $p_{1}>r$ then we can obtain (\ref{eq:sigma}) by
using Fact \ref{fac:holder} with $r$, $p_{1}$ and some number $s\in\left(p_{1},(p_{0})_{m_{0}}^{*}\right)$
now assuming the roles of $p_{0}$, $p$ and $p_{1}$ respectively.
More precisely, for each $f\in W^{m_{0},p_{0}}$ and for $\theta=\Theta(r,p_{1},s)\in(0,1)$,
we have \begin{eqnarray}
\left\Vert (I-M_{t})f\right\Vert _{L^{p_{1}}} & \le & \left\Vert (I-M_{t})f\right\Vert _{L^{r}}^{1-\theta}\left\Vert (I-M_{t})f\right\Vert _{L^{s}}^{\theta}\nonumber \\
 & \le & \left(\left\Vert I-M_{t}\right\Vert _{W^{m_{0},p_{0}}\to L^{r}}\left\Vert f\right\Vert _{W^{m_{0},p_{0}}}\right)^{1-\theta}\left(2\left\Vert f\right\Vert _{L^{s}}\right)^{\theta}\,.\label{eq:tmji}\end{eqnarray}
Since $p_{0}<s<(p_{0})_{m_{0}}^{*}$ we have that $\left\Vert f\right\Vert _{L^{s}}$
is bounded by a constant multiple of $\left\Vert f\right\Vert _{W^{m_{0},p_{0}}}$
which we can substitute in (\ref{eq:epg}) and then use (\ref{eq:okfr})
to obtain the required property (\ref{eq:sigma}) in this case. 

Sub-case 1.3: If $p_{1}<r$, we use an argument similar to the one
for Sub-case 1.2. This time we apply Fact \ref{fac:holder} with $p_{0},$
$p_{1}$ and $r$ in the roles of $p_{0}$, $p$ and $p_{1}$. Accordingly,
analogously to (\ref{eq:eptmji}), for each $f\in W^{m_{0},p_{0}}$
and for $\theta=\Theta(p_{0},p_{1},r)\in(0,1)$, we have that \begin{eqnarray}
\left\Vert (I-M_{t})f\right\Vert _{L^{p_{1}}} & \le & \left\Vert (I-M_{t})f\right\Vert _{L^{p_{0}}}^{1-\theta}\left\Vert (I-M_{t})f\right\Vert _{L^{r}}^{\theta}\nonumber \\
 & \le & \left(2\left\Vert f\right\Vert _{L^{p_{0}}}\right)^{1-\theta}\left(\left\Vert I-M_{t}\right\Vert _{W^{m_{0},p_{0}}\to L^{r}}\left\Vert f\right\Vert _{W^{m_{0},p_{0}}}\right)^{\theta}\,.\label{eq:epg}\end{eqnarray}
Obviously $\left\Vert f\right\Vert _{L^{p_{0}}}\le\left\Vert f\right\Vert _{W^{m_{0},p_{0}}}$
and so the proof is also complete in this case. 

Case 2: If $0<m_{0}<1$, then we apply Theorem \ref{thm:fund interp}
to the operator $T=I-M_{t}$ and the couples $\left(A_{0},A_{1}\right)=\left(L^{p_{0}},W^{1,p_{0}}\right)$
and $\left(B_{0},B_{1}\right)=(L^{p_{0}},L^{r})$ where $r\in\left(p_{0},(p_{0})^{*}\right)$.
We choose $\theta=m_{0}$ and use the facts (see Appendix A) that
$W^{m_{0},p_{0}}=\left[L^{p_{0}},W^{1,p_{0}}\right]_{m_{0}}$ and
$\left[L^{p_{0}},L^{r}\right]_{m_{0}}=L^{s_{0}}$, where \begin{equation}
\frac{1}{s_{0}}=\frac{1-m_{0}}{p_{0}}+\frac{m_{0}}{r}\,.\label{eq:psm}\end{equation}
 Thus we obtain that\begin{eqnarray}
\left\Vert I-M_{t}\right\Vert _{W^{m_{0},p_{0}}\to L^{s_{0}}} & \le & \left\Vert I-M_{t}\right\Vert _{L^{p_{0}}\to L^{p_{0}}}^{1-m_{0}}\left\Vert I-M_{t}\right\Vert _{W^{1,p_{0}}\to L^{r}}^{m_{0}}\nonumber \\
 & \le & 2^{1-m_{0}}\left\Vert I-M_{t}\right\Vert _{W^{1,p_{0}}\to L^{r}}^{m_{0}}\,.\label{eq:iwcs}\end{eqnarray}

Since we are free to choose $r$ arbitrarily close to $p_{0}$, we
see from (\ref{eq:psm}) that we can also have $s_{0}$ arbitrarily
close to $p_{0}$. So, keeping (\ref{eq:ecop}) in mind, let us choose
$r$ so that $s_{0}<p_{1}$ and let us choose a second number $s_{1}\in\left(p_{1},(p_{0})_{m_{0}}^{*}\right)$.
Now we use Fact \ref{fac:holder} once more: For each $f\in W^{m_{0},p_{0}}$,
and for $\theta=\Theta(s_{0},p_{1},s_{1})\in(0,1)$, we have \begin{eqnarray}
\left\Vert (I-M_{t})f\right\Vert _{L^{p_{1}}} & \le & \left\Vert (I-M_{t})f\right\Vert _{L^{s_{0}}}^{1-\theta}\left\Vert (I-M_{t})f\right\Vert _{L^{s_{1}}}^{\theta}\nonumber \\
 & \le & \left(\left\Vert I-M_{t}\right\Vert _{W^{m_{0},p_{0}}\to L^{s_{0}}}\left\Vert f\right\Vert _{W^{m_{0},p_{0}}}\right)^{1-\theta}\left(2\left\Vert f\right\Vert _{L^{s_{1}}}\right)^{\theta}\,.\label{eq:eptmji}\end{eqnarray}
The fact that $s_{1}\in\left(p_{0},(p_{0})_{m_{0}}^{*}\right)$ ensures
that $\left\Vert f\right\Vert _{L^{s_{1}}}$ is bounded by a constant
multiple of $\left\Vert f\right\Vert _{W^{m_{0},p_{0}}}$. After we
substitute this in (\ref{eq:eptmji}) and apply (\ref{eq:iwcs}) and
then (\ref{eq:oep}), we obtain (\ref{eq:sigma}) in this final case,
and so complete the proof of the lemma.
\end{proof}
After these preparations, the proof of Theorem \ref{thm:cocoBesov}
is almost immediate. Let $\epsilon\in(0,\beta/2)$ and let $\alpha_{0}=\alpha+\epsilon$,
$\alpha_{1}=\alpha-\epsilon$, $\beta_{0}=\beta+\epsilon$ and $\beta_{1}=\beta-\epsilon$.
Consider the Banach couples 

\[
\left(A_{0},A_{1}\right)=\left(W^{\alpha_{0},p_{0}}(\mathbb{R}^{N}),W^{\alpha_{1},p_{0}}(\mathbb{R}^{N})\right)\mbox{ and }\left(B_{0},B_{1}\right)=\left(W^{\beta_{0},p_{1}}(\mathbb{R}^{N}),W^{\beta_{1},p_{1}}(\mathbb{R}^{N})\right)\,.\]

Let $\lambda=\frac{N}{p_{0}}-\frac{N}{p_{1}}$. For $j=0,1$, since
$\alpha_{j}-\beta_{j}=\alpha-\beta>\lambda$, we obtain from Theorem
\ref{thm:ournew}, that $A_{j}$ is $\mathcal{D}_{\mathbb{Z}^{N}}$-cocompactly
imbedded in $B_{j}$. This, together with Lemma \ref{lem:sigma},
shows that the conditions for applying Theorem \ref{thm:main abstract}
are fulfilled. So we can deduce that $(A_{0},A_{1})_{\theta,q}$ is
$\mathcal{D}_{\mathbb{Z}^{N}}$-cocompactly imbedded into $(B_{0},B_{1})_{\theta,q}$
for each $\theta\in(0,1)$ and $q\in[1,\infty]$. In particular, if
we choose $\theta=1/2$ we obtain the assertion of the theorem. $\qed$
\medskip{}

We now turn to the proof of Theorem \ref{thm:Coco-Bes-Lp}. Obviously
in view of (\ref{eq:besovbi}), (\ref{eq:monotonicity}) and Proposition
\ref{pro:3spaces}, it suffices to consider the case where $q_{0}=q$.
Fix some $\theta\in(0,1)$ and define $s_{0}$ and $r$ so that they
satisfy $s=\theta s_{0}$ and \begin{equation}
\frac{1}{q}=\frac{1-\theta}{p}+\frac{\theta}{r}\,.\label{eq:gian}\end{equation}
We next want to show that \begin{equation}
q<r<p_{s_{0}}^{*}\,.\label{eq:nwts}\end{equation}
The first inequality of (\ref{eq:nwts}) follows from (\ref{eq:gian})
and the fact that $p<q$. The second inequality of (\ref{eq:nwts})
is equivalent to \[
\frac{1}{r}>\frac{1}{p}-\frac{s_{0}}{N}\,,\]
which readily follows from $1/q>1/p-s/N=1/p-\theta s_{0}/N$ and (\ref{eq:gian}).

In view of (\ref{eq:nwts}) and Theorem \ref{thm:ournew} we have
that $W^{s_{0},p}(\mathbb{R}^{N})$ is $\mathcal{D}_{\mathbb{Z}^{N}}$-cocompactly
imbedded into $L^{r}(\mathbb{R}^{N})$. Then, by Theorem \ref{thm:main abstract}
it follows that the imbedding \[
\left(L^{p},W^{s_{0},p}\right)_{\theta,r}\subset\left(L^{p},L^{r_{0}}\right)_{\theta,r}\]
 is $\mathcal{D}_{\mathbb{Z}^{N}}$-cocompact. Using (\ref{eq:besovbi})
and (\ref{eq:lorentz}), we identify the above imbedding as $B^{s,p,r}\subset L^{r}$.
$\qed$

\section{Compactness and existence of minimizers}

\subsection{Compact imbeddings of radial subspaces.}

There are many known examples where a function space $A$ is cocompactly
imbedded into some other function space $B$, and some significant
subspace $\widetilde{A}$ of $A$ is compactly imbedded into the same
space $B$. 

For example, in the case where $A=W^{m,p}(\mathbb{R}^{N})$ and $\widetilde{A}$
is its subspace of functions supported in some fixed compact subset
of $\mathbb{R}^{N}$, then the usual subcritical Sobolev imbedding
of $A$ is cocompact, and that of $\widetilde{A}$ is compact (by
the Rellich-Kondrashov lemma). In this subsection we consider a different
case, where $\widetilde{A}$ is the subspace of all radial functions
in some function space $A$. 

We refer to \cite{Solimini} and also to Chapters 3 and 4 of \cite{Fieseler-Tintarev}
for more detailed discussions of these kind of phenomena.
\begin{thm}
\label{thm:Radial comp}Let $A$ be a reflexive Banach space which
is $\mathcal{D}_{\mathbb{Z}^{N}}$-cocompactly imbedded into $L^{p}(\mathbb{R}^{N})$
for some $p>1$\,,
Suppose that every weakly convergent sequence
in $A$ has a subsequence which converges a.e. Suppose furthermore
that $A$ is $\mathbb{R}^{N}$-shift invariant and also rotation invariant,
i.e. that \[
\left\Vert u\circ\omega\right\Vert _{A}=\left\Vert u\right\Vert _{A}\]
for each $u\in A$ and for each $\omega:\mathbb{R}^{N}\to\mathbb{R}^{N}$
which is either a shift by some element of $\mathbb{R}^{N}$ or an
element of $O(N)$. Let $A_{R}$ denote the subspace of radially symmetric
functions in $A$.

Then the imbedding of $A_{R}$ into $L^{p}(\mathbb{R}^{N})$ is compact. \end{thm}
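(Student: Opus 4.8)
\textbf{Proof plan for Theorem \ref{thm:Radial comp}.}

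The plan is to show that a sequence $\{u_k\}$ which is bounded in $A_R$ and converges weakly to $0$ must in fact converge $\mathcal{D}_{\mathbb{Z}^N}$-weakly to $0$ in $A$; then $\mathcal{D}_{\mathbb{Z}^N}$-cocompactness of the imbedding $A\subset L^p(\mathbb{R}^N)$ forces $u_k\to 0$ in $L^p$, which is exactly the asserted compactness. So the core of the argument is the implication $u_k\rightharpoonup 0$ in $A_R$ $\Longrightarrow$ $g_{y_k}u_k\rightharpoonup 0$ in $A$ for every sequence $\{y_k\}\subset\mathbb{Z}^N$, where $g_{y_k}u_k=u_k(\cdot-y_k)$.

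The key observation is a dichotomy on the size of $|y_k|$. First I would pass to a subsequence so that $|y_k|$ either stays bounded or tends to $\infty$. If $|y_k|$ is bounded, then along a further subsequence $y_k$ is constant (it lives in $\mathbb{Z}^N$), say $y_k\equiv y$; then $g_yu_k\rightharpoonup g_y 0=0$ in $A$ since $g_y$ is a continuous linear operator on $A$ (by shift invariance), and weak convergence is preserved under bounded operators. If $|y_k|\to\infty$, I would argue by contradiction: suppose $g_{y_k}u_k\not\rightharpoonup 0$ in $A$, so after passing to a subsequence $g_{y_k}u_k\rightharpoonup w$ in $A$ for some $w\neq 0$ (using reflexivity of $A$ to extract a weakly convergent subsequence of the bounded sequence $\{g_{y_k}u_k\}$). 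The radial symmetry of $u_k$ is what gives the contradiction: because $u_k$ is radial, for each $k$ one can apply $\lfloor c|y_k|\rfloor$ rotations $\omega_1,\dots,\omega_{m_k}\in O(N)$ (with $c$ a dimensional constant) so that the supports of the translates $u_k(\cdot-\omega_i y_k)$ are pairwise ``disjoint at scale $1$'' — more precisely, the points $\omega_i y_k$ are $\ge 1$ apart — and $m_k\to\infty$. Since $u_k$ is radial, $u_k\circ\omega_i = u_k$, so $u_k(\cdot-\omega_i y_k)=(u_k(\cdot-y_k))\circ\omega_i^{-1}$ has the same $A$-norm and the same $L^p$-norm as $u_k(\cdot-y_k)$, and along the subsequence each of these $m_k$ translates also converges weakly in $A$ to a rotated copy of $w$, all of comparable norm bounded below. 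Testing against a fixed functional that detects $w$ and summing over the (asymptotically disjointly supported, and hence asymptotically orthogonal in $L^p$) copies, one finds $\|u_k\|_{L^p}^p \gtrsim m_k\,\delta$ for some $\delta>0$ eventually, contradicting boundedness of $\{u_k\}$ in $L^p$ (which holds since $A\subset L^p$ continuously). Hence $w=0$, i.e.\ $g_{y_k}u_k\rightharpoonup 0$ in $A$ in this case as well.

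Combining the two cases, every subsequence of $\{g_{y_k}u_k\}$ has a further subsequence converging weakly to $0$ in $A$, so the whole sequence does; since $\{y_k\}$ was arbitrary this means $u_k\stackrel{\mathcal{D}_{\mathbb{Z}^N}}{\rightharpoonup}0$ in $A$. Invoking $\mathcal{D}_{\mathbb{Z}^N}$-cocompactness of $A\subset L^p(\mathbb{R}^N)$ (Definition \ref{def:cocompact imbedding}) gives $u_k\to 0$ in $L^p(\mathbb{R}^N)$, establishing the compactness of $A_R\subset L^p(\mathbb{R}^N)$. The main obstacle is making the ``disjoint rotated copies'' argument rigorous: one must produce $\sim|y_k|$ rotations of the vector $y_k$ that are mutually separated by distance $\ge 1$ (an elementary sphere-packing count on the sphere of radius $|y_k|$), control the weak limits of the rotated translates simultaneously (handled by passing to a diagonal subsequence, using that $O(N)$ is compact so the rotations themselves subconverge), and convert ``separated centers'' into a genuine lower bound on $\|u_k\|_{L^p}^p$ — this last step is cleanest if one first observes that the hypothesis that weakly convergent sequences in $A$ have a.e.-convergent subsequences, together with boundedness in $L^p$, lets one localize mass of $u_k(\cdot-y_k)$ near the origin in an $L^p$ sense, so that the rotated copies carry essentially non-overlapping $L^p$ mass.
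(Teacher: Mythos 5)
Your proposal is correct in substance and follows the same overall strategy as the paper's proof --- reduce compactness of $A_{R}\subset L^{p}$ to showing that weak convergence to $0$ in $A_{R}$ forces $\mathcal{D}_{\mathbb{Z}^{N}}$-weak convergence in $A$, and derive a contradiction from a nonzero weak limit $w$ of $u_{k}(\cdot-y_{k})$ with $\left|y_{k}\right|\to\infty$ by exploiting rotations of the radial functions $u_{k}$ --- but your quantitative step is genuinely different. The paper fixes the limiting direction $z_{*}=\lim y_{k}/\left|y_{k}\right|$, chooses finitely many \emph{fixed} rotations $\omega_{1},\dots,\omega_{M}$ with distinct $\omega_{i}^{-1}z_{*}$, so that $\left|\omega_{i}^{-1}y_{k}-\omega_{j}^{-1}y_{k}\right|\to\infty$, and then invokes the iterated Brezis--Lieb lemma (Proposition \ref{pro:IteratedBL}) to conclude $\liminf_{k}\|u_{k}\|_{p}^{p}\ge M\|w\|_{p}^{p}$ for every $M$, contradicting $L^{p}$-boundedness; you instead take a number $m_{k}\to\infty$ of $k$-dependent rotations of $y_{k}$ with a fixed mutual separation and sum local $L^{p}$ masses over disjoint balls, getting $\|u_{k}\|_{p}^{p}\gtrsim m_{k}\delta\to\infty$. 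Your route is more elementary (no Brezis--Lieb needed, only a Fatou/weak lower-semicontinuity bound on one ball plus a sphere-packing count) and even quantitative, since the number of admissible rotations grows like $\left(\left|y_{k}\right|/R\right)^{N-1}$; the paper's route avoids the packing bookkeeping and the $k$-dependence of the rotations by letting the separations diverge and delegating the counting to Proposition \ref{pro:IteratedBL}. Two small repairs to your write-up: the separation of the points $\omega_{i}y_{k}$ must be taken $\ge2R$, where $R$ is chosen so that $\int_{B_{R}}\left|w\right|^{p}\ge\delta>0$, rather than a pre-declared ``$\ge1$'' (harmless, since $R$ is fixed before the packing); and ``testing against a fixed functional'' is not what produces the $L^{p}$ lower bound --- as you note yourself at the end, the correct mechanism is localization: from a.e.\ convergence of $u_{k}(\cdot-y_{k})$ to $w$ along the subsequence and Fatou's lemma, $\int_{B_{R}(y_{k})}\left|u_{k}\right|^{p}\ge\delta-o(1)$, and radiality of $u_{k}$ makes $\int_{B_{R}(\omega_{i}y_{k})}\left|u_{k}\right|^{p}$ equal to this same quantity for every rotation $\omega_{i}$, so disjointness of the balls gives the sum. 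With that phrasing, the weak limits of the $k$-dependent rotated translates, and the diagonal/compactness-of-$O(N)$ argument you anticipate as an obstacle, are not needed at all.
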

\begin{proof}
Let $\left\{ u_{k}\right\} _{k\in\mathbb{N}}$ be an
arbitrary bounded sequence in $A_{R}$. {We
have to show that it has a subsequence which converges in norm in
$L^{p}(\mathbb{R}^{N})$. Thus, by passing if necessary to a subsequence,
we may assume without loss of generality that $u_{k}$ converges weakly
in $A$ to some limit which, again without loss of generality, may
be assumed to be $0$. This immediately implies that, for any finite
subset $E$ of $\mathbb{Z}^{N}$ and any sequence $\left\{ y_{k}\right\} _{k\in\mathbb{N}}$
in $E$, the sequence $u_{k}(\cdot-y_{k})$ tends to $0$ weakly in
$A$. If the sequence $\left\{ u_{k}\right\} _{k\in\mathbb{N}}$ is
$\mathcal{D}_{\mathbb{Z}^{N}}$-weakly convergent to $0$ in $A$
then, in view of the $\mathcal{D}_{\mathbb{Z}^{N}}$-cocompactness
of the embedding of $A$ into $L^{p}(\mathbb{R}^{N})$, the proof
is complete. Therefore we shall assume the existence of a sequence
$\left\{ y_{k}\right\} _{k\in\mathbb{N}}$ in $\mathbb{R}^{N}$ for
which $u_{k}(\cdot-y_{k})$ does not converge weakly to $0$ and show
that this leads to a contradiction. After making this assumption,
further passages to subsequences, if necessary, enable us to assume
that $u_{k}\left(\cdot-y_{k}\right)$ does have a non zero weak limit
$w\in A$ and also that \begin{equation}
\lim_{k\to\infty}\frac{y_{k}}{\left|y_{k}\right|}=z_{*}\label{eq:zstar}\end{equation}
for some point $z_{*}$ on the unit sphere of $\mathbb{R}^{N}$. The
first of these properties tells us that the sequence $\left\{ y_{k}\right\} _{k\in\mathbb{N}}$
cannot be confined to any finite subset $E$ of $\mathbb{Z}^{N}$.
So we may also assume that $\left|y_{k}\right|\to\infty$. }{\par}

{S}ince $u_{{k}}\circ\omega=u_{{k}}$,
for every $\omega\in O(N)$, it follows that $u_{k}(\cdot-\omega^{-1}y_{k})\rightharpoonup w\circ\omega$\,.
Let {$\left\{ \omega_{n}\right\} _{n\in\mathbb{N}}$
be a sequence of} elements of $O(N)$ {which
satisfy $\omega_{i}^{-1}z_{*}\ne\omega_{j}^{-1}z_{*}$ whenever $i\ne j$.}
Then {it readily follows from (\ref{eq:zstar})
that} $\left|\omega_{i}^{-1}y_{k}-\omega_{j}^{-1}y_{k}\right|\to\infty$
whenever $i\ne j$. Taking into account the continuity of the imbedding
into $L^{p}$ and the assumption about a.e.~convergence, passing
{yet again} if necessary to a subsequence, and
then applying the iterated Brezis-Lieb lemma {(Proposition
\ref{pro:IteratedBL}) with $y_{k}^{(n)}=-\omega_{n}^{-1}y_{k}$ for
each $k$ and $n$}, we conclude {from (\ref{eq:iterBL})}
that, for any $M\in\mathbb{N}$,

\[
\infty>\liminf_{k\to\infty}\int_{\mathbb{R}^{N}}|u_{k}|^{p}\ge\sum_{i=1}^{M}\int_{\mathbb{R}^{N}}|w{\circ\omega_{i}}|^{p}=M\int_{\mathbb{R}^{N}}|w|^{p}.\]

Since $M$ is an arbitrary integer, we obtain {the}
contradiction {required to complete the proof
of the theorem.} 
\end{proof}

Theorem \ref{thm:Radial comp} can be immediately combined with Theorem
\ref{thm:ournew} to give the following: 
\begin{cor}
Let $\alpha>0$, $p\in(1,\infty)$, $N<p\alpha$ and \textup{$q\in(p,p_{\alpha}^{*})$
. Then the imbedding of $W_{R}^{\alpha,p}(\mathbb{R}^{N})$ into $L^{q}(\mathbb{R}^{N})$
is compact. }
\end{cor}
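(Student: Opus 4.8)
The final statement is the Corollary asserting that $W_R^{\alpha,p}(\mathbb{R}^N) \hookrightarrow L^q(\mathbb{R}^N)$ is compact when $\alpha > 0$, $p \in (1,\infty)$, $N < p\alpha$, and $q \in (p, p_\alpha^*)$. Here $p_\alpha^* = \infty$ since $N < p\alpha$, so the condition is simply $q \in (p,\infty)$.

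\medskip

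The plan is to verify the hypotheses of Theorem~\ref{thm:Radial comp} for the space $A = W^{\alpha,p}(\mathbb{R}^N)$ with the exponent $p$ there taken to be our $q$. First, $A$ is a reflexive Banach space (since $p \in (1,\infty)$, fractional Sobolev spaces are reflexive, being isomorphic via $\Lambda^{\alpha/2}$ to $L^p(\mathbb{R}^N)$). Second, Theorem~\ref{thm:ournew} gives that $W^{\alpha,p}(\mathbb{R}^N)$ is $\mathcal{D}_{\mathbb{Z}^N}$-cocompactly imbedded into $L^q(\mathbb{R}^N)$ for every $q \in (p, p_\alpha^*) = (p,\infty)$; this is exactly the cocompactness hypothesis needed (with the roles matched so that ``$L^p$'' in Theorem~\ref{thm:Radial comp} is our $L^q$). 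Third, $W^{\alpha,p}(\mathbb{R}^N)$ is manifestly shift invariant and rotation invariant, since the norm is defined through the Fourier multiplier $(1+|\xi|^2)^{\alpha/2}$, whose symbol is radial and hence invariant under the orthogonal group, and the $L^p$ norm is translation and rotation invariant. Fourth, one must check that every weakly convergent sequence in $W^{\alpha,p}(\mathbb{R}^N)$ has a subsequence converging a.e.

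\medskip

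For this last point I would argue as follows. If $u_k \rightharpoonup u$ in $W^{\alpha,p}(\mathbb{R}^N)$, then $\{u_k\}$ is bounded there, hence bounded in $L^q(\mathbb{R}^N)$ for the relevant $q$ and, since $N < p\alpha$, by Sobolev--Morrey embedding also bounded in $C^{0,\gamma}(\mathbb{R}^N)$ for some $\gamma > 0$ (indeed one may choose $\gamma = \alpha - N/p$ if $\alpha - N/p \le 1$, or work with a fixed smaller Hölder exponent). A uniformly bounded, uniformly Hölder continuous sequence is equicontinuous on every compact set, so by Arzel\`a--Ascoli and a diagonal argument over an exhaustion of $\mathbb{R}^N$ by balls, a subsequence converges locally uniformly, hence pointwise everywhere. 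This supplies the required a.e.-convergent subsequence. (Alternatively, since $W^{\alpha,p} \hookrightarrow W^{1,p}$ is impossible when $\alpha < 1$, one uses instead the compact embedding $W^{\alpha,p}(\mathbb{R}^N) \hookrightarrow\hookrightarrow L^p_{\mathrm{loc}}(\mathbb{R}^N)$ on bounded domains together with a diagonal extraction to get a.e. convergence; but the Hölder route is cleaner under the hypothesis $N < p\alpha$.)

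\medskip

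With all four hypotheses verified, Theorem~\ref{thm:Radial comp} applies directly and yields that the imbedding of $A_R = W_R^{\alpha,p}(\mathbb{R}^N)$ into $L^q(\mathbb{R}^N)$ is compact, which is the assertion of the corollary. I do not expect any genuine obstacle here; the only point requiring a moment's care is the a.e.-convergence property, and even there the condition $N < p\alpha$ makes it routine via Arzel\`a--Ascoli. The corollary is essentially a bookkeeping combination of Theorem~\ref{thm:ournew} and Theorem~\ref{thm:Radial comp}, exactly as the text announces.
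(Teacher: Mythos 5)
Your proposal is correct and follows exactly the route the paper intends: the paper's entire proof is the remark that Theorem \ref{thm:Radial comp} is ``immediately combined'' with Theorem \ref{thm:ournew}, and your verification of the hypotheses of Theorem \ref{thm:Radial comp} (reflexivity, cocompactness from Theorem \ref{thm:ournew}, translation and rotation invariance of the norm, and a.e.\ convergence of a subsequence of any weakly convergent sequence) simply makes that combination explicit.
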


Similarly, combining \ref{thm:Radial comp} with Theorem \ref{thm:Coco-Bes-Lp}
will give us:
\begin{cor}
Let $s>0,\;1<p<\infty,\: p\le q_{0}<q<p_{s}^{*}$. Then the subspace
$B_{R}^{s,p,q_{0}}(\mathbb{R}^{N})$ of radially symmetric functions
in $B^{s,p,q_{0}}(\mathbb{R}^{N})$ is compactly imbedded into $L^{q}(\mathbb{R}^{N})$. 
\end{cor}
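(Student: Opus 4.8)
The plan is to derive this corollary exactly as the authors derive the previous one, by combining Theorem~\ref{thm:Radial comp} with Theorem~\ref{thm:Coco-Bes-Lp}. The only point requiring attention is that the space playing the role of ``$L^p$'' in Theorem~\ref{thm:Radial comp} is here $L^q(\mathbb{R}^N)$, so every hypothesis of that theorem must be checked for $A=B^{s,p,q_0}(\mathbb{R}^N)$ with target space $L^q(\mathbb{R}^N)$.

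First I would verify the cocompactness hypothesis. If $q_0>p$, then $s$, $p$, $q_0$, $q$ satisfy $p<q_0\le q<p_s^*$, so Theorem~\ref{thm:Coco-Bes-Lp} gives directly that $B^{s,p,q_0}(\mathbb{R}^N)$ is $\mathcal{D}_{\mathbb{Z}^N}$-cocompactly imbedded into $L^q(\mathbb{R}^N)$. The borderline value $q_0=p$ (admitted in the corollary but not by the hypotheses of Theorem~\ref{thm:Coco-Bes-Lp}) is handled by fixing some $q_0'\in(p,q)$: the imbedding $B^{s,p,p}(\mathbb{R}^N)\subset B^{s,p,q_0'}(\mathbb{R}^N)$ is continuous by monotonicity in the third Besov index (inequality~(\ref{eq:monotonicity})), while $B^{s,p,q_0'}(\mathbb{R}^N)\subset L^q(\mathbb{R}^N)$ is $\mathcal{D}_{\mathbb{Z}^N}$-cocompact by Theorem~\ref{thm:Coco-Bes-Lp}; since lattice shifts act isometrically on all three spaces, Proposition~\ref{pro:3spaces} yields the cocompactness of $B^{s,p,p}(\mathbb{R}^N)\subset L^q(\mathbb{R}^N)$.

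Next I would check the remaining hypotheses of Theorem~\ref{thm:Radial comp}. Reflexivity of $A=B^{s,p,q_0}(\mathbb{R}^N)$ holds because $1<p\le q_0<q<\infty$ (the constraint $q<p_s^*$ forces $q$, hence $q_0$, to be finite) and Besov spaces with both inner exponents in $(1,\infty)$ are reflexive. Shift invariance and rotation invariance of the Besov norm are immediate from any of the standard (finite-difference or Littlewood--Paley) definitions recalled in Appendix~A. Finally, the requirement that every weakly convergent sequence in $A$ admit an a.e.\ convergent subsequence follows from a routine local-compactness-and-diagonalization argument: such a sequence is bounded in $B^{s,p,q_0}(\mathbb{R}^N)$, and since $s>0$ the restriction to any ball $B_R$ imbeds compactly into $L^p(B_R)$ (Rellich--Kondrashov for Besov spaces), so after passing to subsequences converging in $L^p(B_R)$ for $R=1,2,\dots$ and diagonalizing one obtains a subsequence converging a.e.\ on $\mathbb{R}^N$. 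With all hypotheses in place, Theorem~\ref{thm:Radial comp} delivers the compact imbedding of $B_R^{s,p,q_0}(\mathbb{R}^N)$ into $L^q(\mathbb{R}^N)$.

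I do not anticipate any real obstacle: the substance lies entirely in Theorems~\ref{thm:Radial comp} and~\ref{thm:Coco-Bes-Lp}, and what remains is bookkeeping. The only mildly delicate points are the notational shift (the ``$p$'' of Theorem~\ref{thm:Radial comp} is the ``$q$'' here) and the endpoint case $q_0=p$, both disposed of as above.
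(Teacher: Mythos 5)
Your proposal is correct and takes essentially the same route as the paper, which obtains this corollary simply by combining Theorem~\ref{thm:Radial comp} with Theorem~\ref{thm:Coco-Bes-Lp}. Your explicit verification of the hypotheses of Theorem~\ref{thm:Radial comp} (reflexivity, invariances, a.e.\ convergence) and your handling of the endpoint $q_{0}=p$ via (\ref{eq:monotonicity}) and Proposition~\ref{pro:3spaces} merely fill in details the paper leaves implicit.
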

This result may be compared to the compactness of imbeddings of radial
subspaces of Besov spaces obtained by Sickel and Skrzypczak \cite{SickelSkrzypczak}.
We remark also that some results of this kind may be obtained by interpolation
of imbeddings of radial subspaces of classical Sobolev spaces.

\subsection{Existence of minimizers.}

For $p=2$ the (fractional) Sobolev space $W^{\alpha,2}(\mathbb{R}^{N})$
is of course a Hilbert space, which is customarily denoted by $H^{\alpha}$.
One of its natural equivalent norms is given by \begin{equation}
\left\Vert f\right\Vert _{H^{\alpha}}=\left(\int_{\mathbb{R}^{N}}\left(1+\left|\xi\right|^{2}\right)^{\alpha}\left|\widehat{f}(\xi)\right|^{2}d\xi\right)^{1/2}\,.\label{eq:hn}\end{equation}

The following two theorems hold for the norm (\ref{eq:hn}) and also
for any other equivalent Hilbert norm on $H^{\alpha}$ which, like
(\ref{eq:hn}), is invariant under lattice translations. When $\alpha$
is a positive integer these theorems are well known, the first being
due to Berezstycki-P.\,-L.\,Lions \cite{BerestyckiLions} and the
second to P.\,-L.\,Lions \cite{PLL1a}. Later versions of the proofs
of their two results can be found, for example in Struwe \cite{Struwe}.
Our extensions here to the case where $\alpha$ is not an integer
are straightforward adaptations of the standard proofs.
\begin{thm}
\label{thm:autonom-min}For each $\alpha>0$ and each $q\in(2,2_{\alpha}^{*})$\,,
the infinimum \begin{equation}
\kappa:=\inf_{\|u\|_{L^{q}(\mathbb{R}^{N})}=1}\|u\|_{H^{\alpha}(\mathbb{R}^{N})}^{2}\label{eq:kappa}\end{equation}
is attained.\end{thm}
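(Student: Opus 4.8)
\textbf{Proof proposal for Theorem \ref{thm:autonom-min}.}

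The plan is to run the standard concentration-compactness / direct method argument, but to extract compactness from our cocompactness result (Theorem \ref{thm:ournew}) rather than from a genuine compact imbedding, which is unavailable on all of $\mathbb{R}^N$. First I would take a minimizing sequence $\{u_k\}_{k\in\mathbb{N}}$, i.e. $\|u_k\|_{L^q}=1$ for all $k$ and $\|u_k\|_{H^\alpha}^2\to\kappa$. This sequence is bounded in $H^\alpha$, which is reflexive, so after passing to a subsequence we may assume $u_k\rightharpoonup u$ weakly in $H^\alpha$. The key dichotomy is whether, after suitable lattice translations, the sequence loses all its mass at infinity or not. If $\{u_k\}$ were $\mathcal{D}_{\mathbb{Z}^N}$-weakly convergent to $0$, then Theorem \ref{thm:ournew} (applicable since $2<q<2_\alpha^*=p_\alpha^*$ with $p=2$) would force $\|u_k\|_{L^q}\to 0$, contradicting $\|u_k\|_{L^q}=1$. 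Hence there is a sequence $\{y_k\}_{k\in\mathbb{N}}$ in $\mathbb{Z}^N$ along which $u_k(\cdot-y_k)$ does \emph{not} converge weakly to $0$; replacing $u_k$ by $u_k(\cdot-y_k)$ (which changes neither $\|u_k\|_{L^q}$ nor $\|u_k\|_{H^\alpha}$, by lattice invariance of the norm) and passing to a further subsequence, we get $u_k\rightharpoonup u$ in $H^\alpha$ with $u\neq 0$.

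Next I would show $u$ is a minimizer. From $2<q<2_\alpha^*$ and the continuity of $W^{\alpha,2}=H^\alpha\hookrightarrow L^q$, together with the a.e.-convergence of a subsequence of $\{u_k\}$ (which follows from local compactness of the Sobolev imbedding), the Brezis-Lieb lemma (Appendix B) applies in $L^q$: $\|u_k\|_{L^q}^q=\|u_k-u\|_{L^q}^q+\|u\|_{L^q}^q+o(1)$. Similarly, since $\|\cdot\|_{H^\alpha}^2$ is a Hilbert norm, weak convergence gives the orthogonal splitting $\|u_k\|_{H^\alpha}^2=\|u_k-u\|_{H^\alpha}^2+\|u\|_{H^\alpha}^2+o(1)$, so in particular $\|u_k-u\|_{H^\alpha}^2\to\kappa-\|u\|_{H^\alpha}^2=:\mu\ge 0$. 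Set $a:=\|u\|_{L^q}^q\in(0,1]$ and note $\|u_k-u\|_{L^q}^q\to 1-a\ge 0$. By the definition of $\kappa$ as an infimum over the constraint surface and homogeneity, $\|u\|_{H^\alpha}^2\ge\kappa\, a^{2/q}$ and $\|u_k-u\|_{H^\alpha}^2\ge\kappa\,\|u_k-u\|_{L^q}^2$ for each $k$; passing to the limit in the latter gives $\mu\ge\kappa(1-a)^{2/q}$. Adding the two inequalities yields
\[
\kappa=\|u\|_{H^\alpha}^2+\mu\ge\kappa\left(a^{2/q}+(1-a)^{2/q}\right).
\]
Since $2/q<1$, the function $s\mapsto s^{2/q}+(1-s)^{2/q}$ is strictly greater than $1$ on $(0,1)$, so the inequality forces $a=1$, i.e. $\|u\|_{L^q}=1$, and then $\|u\|_{H^\alpha}^2\le\kappa$. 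Combined with $\|u\|_{H^\alpha}^2\ge\kappa$ (from the constraint and the definition of $\kappa$), we get $\|u\|_{H^\alpha}^2=\kappa$, so $u$ attains the infimum.

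The main obstacle is the first step: converting the failure of $\mathcal{D}_{\mathbb{Z}^N}$-weak convergence into a genuinely nontrivial weak limit after translation. One must be careful that "$u_k(\cdot-y_k)$ does not converge weakly to $0$ for \emph{some} sequence $\{y_k\}$ in $\mathbb{Z}^N$" indeed yields, after a subsequence, a weak limit that is nonzero — this is immediate from the definition of $\mathcal{D}$-weak convergence (Definition \ref{def:dwc}) together with reflexivity of $H^\alpha$ and boundedness of $\{u_k\}$, since a bounded sequence in a reflexive space has a weakly convergent subsequence, and by hypothesis its limit cannot be $0$. A minor technical point worth stating explicitly is that the norm \eqref{eq:hn} (and any equivalent lattice-invariant Hilbert norm) satisfies the Pythagorean identity under weak convergence used above; this is the standard Hilbert-space fact $\|u_k\|^2=\|u_k-u\|^2+\|u\|^2+2\,\mathrm{Re}\langle u_k-u,u\rangle$ with the cross term tending to $0$. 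Everything else is routine, and the theorem follows.
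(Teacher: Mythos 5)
Your proposal is correct and follows essentially the same route as the paper: rule out vanishing via the $\mathcal{D}_{\mathbb{Z}^N}$-cocompactness of $H^{\alpha}\subset L^{q}$ (Theorem \ref{thm:ournew}), translate by lattice shifts to obtain a nonzero weak limit, then combine the Hilbert-space orthogonal splitting with the Brezis--Lieb lemma and the strict subadditivity coming from $q>2$ to show the limit carries full $L^{q}$ mass and is a minimizer. The only cosmetic differences are bookkeeping (your $a^{2/q}+(1-a)^{2/q}>1$ formulation versus the paper's $(1-\|w\|_{q}^{q})^{2/q}+\|w\|_{q}^{2}$, and using the splitting rather than weak lower semicontinuity for $\|u\|_{H^{\alpha}}^{2}\le\kappa$), which are equivalent.
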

\begin{proof}
Let $\left\{ u_{k}\right\} _{k\in\mathbb{N}}$ be a minimizing sequence,
that is, $\lim_{k\to\infty}\|u_{k}\|_{H^{\alpha}(\mathbb{R}^{N})}^{2}=\kappa$
and $\|u_{k}\|_{L^{q}(\mathbb{R}^{N})}=1$. Suppose that, for every
sequence $\left\{ y_{k}\right\} _{k\in\mathbb{N}}$ in $\mathbb{Z^{N}}$,
the sequence $\left\{ u_{k}(\cdot-y_{k})\right\} _{k\in\mathbb{N}}$
converges weakly in $H^{\alpha}$ to $0$. Then $u_{k}\to0$ in $L^{q}(\mathbb{R}^{N})$,
since $H^{\alpha}(\mathbb{R}^{N})$ is $\mathcal{D}_{\mathbb{Z}^{N}}$-cocompactly
imbedded into $L^{q}(\mathbb{R}^{N})$ by Theorem \ref{thm:ournew}.
This contradicts the assumption $\|u_{k}\|_{L^{q}(\mathbb{R}^{N})}=1.$
Consequently, there exist a (possibly renamed) subsequence $\left\{ y_{k}\right\} _{k\in\mathbb{N}}$
and a function $w\in H^{\alpha}(\mathbb{R}^{N})\setminus\left\{ 0\right\} $
, such that $u_{k}(\cdot-y_{k})$ converges to $w$ weakly in $H^{\alpha}$
and also pointwise a.e. (Here we take into account that weak convergence
in $H^{\alpha}$ implies convergence locally in measure.) Furthermore,
the sequence $v_{k}=\left\{ u_{k}(\cdot-y_{k})\right\} _{k\in\mathbb{N}}$
is also a minimizing sequence. Then

\begin{eqnarray}
\kappa & = & \|v_{k}\|_{H^{\alpha}(\mathbb{R}^{N})}^{2}+o(1)=\|(v_{k}-w)+w\|_{H^{\alpha}(\mathbb{R}^{N})}^{2}+o(1)\nonumber \\
 & = & \|v_{k}-w\|_{H^{\alpha}(\mathbb{R}^{N})}^{2}+\|w\|_{H^{\alpha}(\mathbb{R}^{N})}^{2}+2\left\langle v_{k}-w,w\right\rangle +o(1)\nonumber \\
 & = & \|v_{k}-w\|_{H^{\alpha}(\mathbb{R}^{N})}^{2}+\|w\|_{H^{\alpha}(\mathbb{R}^{N})}^{2}+o(1)\,.\label{eq:mol}\end{eqnarray}

By the Brezis-Lieb lemma, 

\begin{equation}
1=\|v_{k}\|_{L^{q}(\mathbb{R}^{N})}^{q}=\|v_{k}-w\|_{L^{q}(\mathbb{R}^{N})}^{q}+\|w\|_{L^{q}(\mathbb{R}^{N})}^{q}+o(1)\,.\label{eq:ubl}\end{equation}

Since $\left\Vert f\right\Vert _{H^{\alpha}}^{2}\ge\kappa\left\Vert f\right\Vert _{L^{q}}^{2}$
for each $f\in H^{\alpha}$, we can deduce from (\ref{eq:mol}) that 

\[
\kappa\ge\kappa\|v_{k}-w\|_{L^{q}(\mathbb{R}^{N})}^{2}+\kappa\|w\|_{L^{q}(\mathbb{R}^{N})}^{2}+o(1)\,.\]
This in turn, in view of (\ref{eq:ubl}), implies that \[
\kappa\ge\kappa(1-\|w\|_{L^{q}(\mathbb{R}^{N})}^{q})^{2/q}+\kappa\|w\|_{L^{q}(\mathbb{R}^{N})}^{2}\,.\]

Since $q>2$ and $w\neq0$, the last inequality holds true only if
$\|w\|_{L^{q}(\mathbb{R}^{N})}=1$. The weak lower semicontinuity
of the norm implies that $\|w\|_{H^{\alpha}(\mathbb{R}^{N})}^{2}\le\kappa$\,.
But then $\|w\|_{H^{\alpha}(\mathbb{R}^{N})}^{2}=\kappa$, since $\kappa$
is the infimum value for such expressions. Therefore, $v_{k}$ converges
in the norm of $H^{\alpha}(\mathbb{R}^{N})$ to a minimum element
$w$.
\end{proof}

Our second theorem deduces the existence of a minimizer as a consequence
of a penalty condition.
\begin{thm}
\label{thm:penalty-min} Assume that the function $b\in C(\mathbb{R}^{N})$
has a limit at infinity and that $0<b_{\infty}:=\lim_{|x|\to\infty}b(x)<b(x)$
for all $x\in\mathbb{R}^{N}$. Then, for each $\alpha>0$ and each
\textup{$q\in(2,2_{\alpha}^{*})$\,,} the infimum \[
\tilde{\kappa}:=\inf_{\int_{\mathbb{R}^{N}}b(x)|u|^{q}dx=1}\|u\|_{H^{\alpha}(\mathbb{R}^{N})}\]
is attained. \end{thm}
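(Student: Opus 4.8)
The plan is to run the standard Lions concentration--compactness argument for penalized minimization problems; that $\alpha\in(0,\infty)$ need not be an integer changes nothing essential. First I would record some auxiliary quantities. Note that $b$ is bounded, $0<b_{\infty}\le b(x)\le\|b\|_{\infty}<\infty$, and that $\tilde{\kappa}>0$ by the continuous imbedding $H^{\alpha}\subset L^{q}$ (here $q<2_{\alpha}^{*}$). Let $\mu:=\inf\{\|u\|_{H^{\alpha}}:\|u\|_{L^{q}}=1\}>0$; by Theorem \ref{thm:autonom-min} this infimum is attained, say by $v_{0}$. Put $\kappa_{\infty}:=b_{\infty}^{-1/q}\mu$. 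The crucial preliminary step is the \emph{strict} inequality $\tilde{\kappa}<\kappa_{\infty}$: setting $w_{0}:=b_{\infty}^{-1/q}v_{0}$ one has $b_{\infty}\int|w_{0}|^{q}=1$, hence $\int b|w_{0}|^{q}>b_{\infty}\int|w_{0}|^{q}=1$ because $b(x)>b_{\infty}$ for every $x$ and $w_{0}\not\equiv0$; rescaling $w_{0}$ by the factor $\big(\int b|w_{0}|^{q}\big)^{-1/q}<1$ produces a function admissible for $\tilde{\kappa}$ whose $H^{\alpha}$-norm is strictly below $\|w_{0}\|_{H^{\alpha}}=\kappa_{\infty}$.

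Next I would take a minimizing sequence $\{u_{k}\}_{k\in\mathbb{N}}$, so $\int b|u_{k}|^{q}=1$ and $\|u_{k}\|_{H^{\alpha}}^{2}\to\tilde{\kappa}^{2}$; it is bounded in the reflexive space $H^{\alpha}$, so after passing to a subsequence $u_{k}\rightharpoonup u$ in $H^{\alpha}$ and $u_{k}\to u$ a.e.\ (weak convergence in $H^{\alpha}$ implies convergence locally in measure). The first claim is that $u\ne0$. If $u=0$, then for each fixed $R$ the Rellich--Kondrashov lemma (valid since $q<2_{\alpha}^{*}$) gives $\int_{|x|<R}|u_{k}|^{q}\to0$; combining this with $\sup_{|x|\ge R}(b(x)-b_{\infty})\to0$ as $R\to\infty$ and the boundedness of $\{\|u_{k}\|_{L^{q}}\}$ yields $\int(b-b_{\infty})|u_{k}|^{q}\to0$, hence $b_{\infty}\|u_{k}\|_{L^{q}}^{q}\to1$, hence $\|u_{k}\|_{H^{\alpha}}\ge\mu\|u_{k}\|_{L^{q}}\to\mu b_{\infty}^{-1/q}=\kappa_{\infty}$, contradicting $\tilde{\kappa}<\kappa_{\infty}$. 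So $u\ne0$.

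Now set $v_{k}:=u_{k}-u\rightharpoonup0$. From the Hilbert structure, $\|u_{k}\|_{H^{\alpha}}^{2}=\|v_{k}\|_{H^{\alpha}}^{2}+\|u\|_{H^{\alpha}}^{2}+o(1)$, and the Brezis--Lieb lemma applied to $b^{1/q}u_{k}$ gives $1=\int b|v_{k}|^{q}+\int b|u|^{q}+o(1)$. Write $a:=\int b|u|^{q}\in(0,1]$. Applying the Rellich argument once more to the weakly null sequence $v_{k}$ shows $\int b|v_{k}|^{q}=b_{\infty}\|v_{k}\|_{L^{q}}^{q}+o(1)$, so $\|v_{k}\|_{L^{q}}^{q}\to(1-a)/b_{\infty}$ and therefore $\lim_{k}\|v_{k}\|_{H^{\alpha}}^{2}\ge\mu^{2}\big((1-a)/b_{\infty}\big)^{2/q}=\kappa_{\infty}^{2}(1-a)^{2/q}$; also $\|u\|_{H^{\alpha}}^{2}\ge\tilde{\kappa}^{2}a^{2/q}$ by rescaling $u$ to meet the constraint. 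Hence
\[
\tilde{\kappa}^{2}=\lim_{k\to\infty}\|u_{k}\|_{H^{\alpha}}^{2}\ge\kappa_{\infty}^{2}(1-a)^{2/q}+\tilde{\kappa}^{2}a^{2/q}.
\]
Since $q>2$ we have $2/q<1$, so $t^{2/q}>t$ for $t\in(0,1)$; if $0<a<1$, then, using $\kappa_{\infty}^{2}>\tilde{\kappa}^{2}$, the right-hand side exceeds $\tilde{\kappa}^{2}\big((1-a)^{2/q}+a^{2/q}\big)>\tilde{\kappa}^{2}$, which is absurd. Hence $a=1$: $u$ is admissible, so $\|u\|_{H^{\alpha}}\ge\tilde{\kappa}$, and the displayed inequality then forces $\|u\|_{H^{\alpha}}^{2}=\tilde{\kappa}^{2}$ (and $v_{k}\to0$ in $H^{\alpha}$). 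Thus $u$ attains $\tilde{\kappa}$.

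The hard part is the step asserting that a bounded weakly null sequence effectively sees only the value $b_{\infty}$, i.e.\ that $\int(b-b_{\infty})|v_{k}|^{q}\to0$; this is precisely where the strict pointwise bound $b(x)>b_{\infty}$ together with $b(x)\to b_{\infty}$ is used to beat the a priori smaller level $\kappa_{\infty}$, and it is the heart of the penalization mechanism. The remaining ingredients --- the Hilbert splitting, the weighted Brezis--Lieb lemma, and the inequality $t^{2/q}>t$ on $(0,1)$ --- are routine.
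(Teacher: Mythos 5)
Your argument is correct and is essentially the paper's own proof: a minimizing sequence is split as $u_k=(u_k-u)+u$ using the Hilbert identity, the constraint is split via the Brezis--Lieb lemma together with the weak continuity of the perturbation $\int(b-b_\infty)|\cdot|^q$ (your local Rellich step is exactly what the paper's domain-splitting hint amounts to), vanishing is excluded by the strict level inequality $\tilde{\kappa}<\kappa_\infty$ obtained from the renormalized minimizer of Theorem \ref{thm:autonom-min}, and $q>2$ forces full concentration of the constraint on the weak limit. Your write-up is in fact a bit more careful than the paper's (explicit $b_\infty^{-1/q}$ normalization and explicit dichotomy in $a$), but it is the same approach, so nothing further is needed.
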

\begin{proof}
Let $F(u):=\int_{\mathbb{R}^{N}}b(x)|u|^{p}dx$, $F_{0}(u):=\int_{\mathbb{R}^{N}}b_{\infty}|u|^{p}dx$
and $\Psi(u):=\int_{\mathbb{R}^{N}}(b(x)-b_{\infty})|u|^{p}dx.$ Note
that $\Psi(u)>0$ unless $u=0$. It is easy to show that$\Psi$ is
weakly continuous in $H^{\alpha}(\mathbb{R}^{N})$, by fixing an $\epsilon>0$
and dividing the domain of integration into $\lbrace b(x)-b_{\infty}\le\epsilon\rbrace$
and the bounded region $\lbrace b(x)-b_{\infty}>\epsilon\rbrace$
. Let $\lbrace u_{k}\rbrace_{k\in\mathbb{N}}$ be a minimizing sequence,
that is, $\lim_{k\to\infty}\|u_{k}\|_{H^{\alpha}(\mathbb{R}^{N})}^{2}=\kappa$
and $F(u_{k})=1$. Without loss of generality, we may assume that
$u_{k}\rightharpoonup w$ in $H^{\alpha}(\mathbb{R}^{N})$. As in
the proof of Theorem \ref{thm:autonom-min},

\begin{equation}
\tilde{\kappa}=\|u_{k}\|_{H^{\alpha}(\mathbb{R}^{N})}^{2}+o(1)=\|u_{k}-w\|_{H^{\alpha}(\mathbb{R}^{N})}^{2}+\|u\|_{H^{\alpha}(\mathbb{R}^{N})}^{2}+o(1)\,.\label{eq:one}\end{equation}
So, if we write $F(u)$ as a sum of $F_{0}(u)=\|u\|_{L^{q}(\mathbb{R}^{N})}^{q}$
and the weakly-continuous functional $\Psi(u)$, the Brezis-Lieb lemma
applied to $F_{0}$ gives us that

\begin{equation}
1=F(u_{k})=\lim_{k\to\infty}F_{0}(u_{k}-w)+F_{0}(w)+\Psi(w)\le\lim_{k\to\infty}F(u_{k}-w)+F(w),\label{eq:two}\end{equation}
where the inequality is strict unless $u_{k}\to w$ in $L^{q}(\mathbb{R}^{N})$.
Comparing (\ref{eq:one}) and (\ref{eq:two}), we obtain

\[
\tilde{\kappa}\ge\tilde{\kappa}\|u_{k}-w\|_{L^{q}(\mathbb{R}^{N})}^{2}+\tilde{\kappa}\|w\|_{L^{q}(\mathbb{R}^{N})}^{2}+o(1)\ge\tilde{\kappa}(1-F(w))^{2/q}+\tilde{\kappa}F(w)^{2/q}+o(1).\]
Since $q>2$\,, the last inequality holds true only if $F(w)=1$
or $w=0$. If, however, $w=0$, by the weak continuity of $\Psi$
and the Brezis-Lieb lemma we have $F(u_{k})=F_{0}(u_{k})$\,, which
implies that $\tilde{\kappa}\ge\kappa$\,. On the other hand, substitution
of the (renormalized) minimizer for \ref{eq:kappa} yields $\tilde{\kappa}<\kappa$\,,
a contradiction. Consequently, $F(w)=1$ and one can verify that $w$
is a minimizer by a literal repetition of the last steps of the proof
of Theorem \ref{thm:autonom-min}.
\end{proof}

\section*{Appendix A: Basics of interpolation theory and Besov spaces.}

We summarize here the basic definitions and facts about interpolation
spaces generated by the {}``real method'' (J.-L.Lions--J.Peetre
\cite{Lions-Peetre}) and by the {}``complex method'' (A.~P.~Calderón
\cite{ca}). For more details one can refer, e.g., to \cite{Adams2},
\cite{BennettSharpley}, \cite{bl}, \cite{BrudnyiKruglyak} and/or
\cite{Triebel}.

\subsection*{Banach couples}

Suppose that $A_{0}$ and $A_{1}$ are Banach spaces which are both
linear subspaces of some Hausdorff linear topological space $\mathcal{A}\ $,
and the identity maps from $A_{0}$ into $\mathcal{A}$ and from $A_{1}$
into $\mathcal{A}$ are both continuous. Then we say that $\left(A_{0},A_{1}\right)$
is a Banach couple. (It is not difficult to see that this definition
is equivalent to the seemingly more stringent definition where $\mathcal{A}$
is also required to be a Banach space.)\\

For each Banach couple $\left(A_{0},A_{1}\right)$ it is clear that
the space $A_{0}+A_{1}$ normed by $\left\Vert a\right\Vert _{A_{0}+A_{1}}:=\inf\left\{ \left\Vert a_{0}\right\Vert _{A_{0}}+\left\Vert a_{1}\right\Vert _{A_{1}}:a_{0}\in A_{0},\, a_{1}\in A_{1},\, a=a_{0}+a_{1}\right\} $
is also a Banach space.

\subsection*{The real interpolation method (J.-L.Lions--J.Peetre\cite{Lions-Peetre})}

There are several equivalent definitions of the \emph{real method
interpolation spaces} $\left(A_{0},A_{1}\right)_{\theta,q}$ of Lions--Peetre,
and here we give one of them that uses the \emph{Peetre $K$-functional}.
This is the functional defined for each fixed $t>0$ and each $a\in A_{0}+A_{1}$,
by\begin{eqnarray*}
K(t,a;A_{0},A_{1}): & = & \inf\left\{ \left\Vert a_{0}\right\Vert _{A_{0}}+t\left\Vert a_{1}\right\Vert _{A_{1}}:a_{0}\in A_{0},\, a_{1}\in A_{1},\, a=a_{0}+a_{1}\right\} .\end{eqnarray*}

Obviously, $\lbrace K(t,\cdot;A_{0},A_{1})\rbrace_{t>0}$ is a family
of equivalent norms on $A_{0}+A_{1}$\,.

For each $\theta\in(0,1)$ and $q\in[1,\infty)$, the Banach space
$\left(A_{0},A_{1}\right)_{\theta,q}$ consists of those elements
$a\in A_{0}+A_{1}$ for which the norm \[
\left\Vert a\right\Vert _{\left(A_{0},A_{1}\right)_{\theta,q}}:=\left(\int_{0}^{\infty}\left(t^{-\theta}K(t,a;A_{0},A_{1})\right)^{q}\frac{dt}{t}\right)^{1/q}.\]
is finite. This definition extends to the case $q=\infty$ with

\[
\left\Vert a\right\Vert _{\left(A_{0},A_{1}\right)_{\theta,\infty}}:=\sup_{t>0}t^{-\theta}K(t,a;A_{0},A_{1})\,.\]

Among the many known properties of these spaces, we mention the inclusions 

\begin{equation}
\left(A_{0},A_{1}\right)_{\theta,q_{0}}\subset\left(A_{0},A_{1}\right)_{\theta,q_{1}}\mbox{for all }\theta\in(0,1)\mbox{ and }1\le q_{0}\le q_{1}\le\infty\,\label{eq:monotonicity}\end{equation}
whose proof can be found, e.g., in \cite{Adams2} p.~216 Corollary
7.17, \cite{bl} p.~46 or \cite{Triebel} pp.~25--26.

\subsection*{The complex interpolation method (A.~P.~Calderón \cite{ca})}

Let $(A_{0},A_{1})$ be a Banach couple. Let $\mathcal{F}=\mathcal{F}(A_{0},A_{1})$
be the space of all functions $f$ of the complex variable $z=x+iy$
with values in $A_{0}+A_{1}$ that satisfy the following conditions:

(a) $f$ is continuous and bounded on the strip $0\le x\le1$ into
$A_{0}+A_{1}$. 

(b) $f$ is analytic from 0 < 0 < 1 into $X_{0}+X_{1}$ (i.e., the
derivative $f'(z)$ exists in $A_{0}+A_{1}$ if $0<x=Rez<1$). 

(c) $f$ is continuous on the line $x=0$ into $A_{0}$ and 

\[
\|f(iy)\|_{X_{0}}\to0\quad\text{ as }|y|\to\infty.\]

(d) $f$ is continuous on the line $x=1$ into $A_{1}$ and 

\[
\|f(1+iy)\|_{X_{1}}\to0\quad\text{ as }|y|\to\infty.\]
The space $\mathcal{F}$ is a Banach space with norm 

\[
\|f\|_{\mathcal{F}}=\max\lbrace\sup_{y\in\mathbb{R}}\|f(iy)\|_{X_{0}},\sup_{y\in\mathbb{R}}\|f(1+iy)\|_{X_{1}}\rbrace.\]
Given a real number $\theta$ in the interval $(0,1)$, we define

\[
A_{\theta}=[A_{0},A_{1}]_{\theta}=\lbrace u\in A_{0}+A_{1}:\; u=f(\theta)\text{ for some }f\in\mathcal{F}\rbrace.\]

The spaces $A_{\theta}$ are called\emph{ complex interpolation spaces
}between $A_{0}$ and $A_{1}$; they are Banach spaces with respective
norms

\[
\|u\|_{A_{\theta}}=\inf\lbrace\|f\|_{\mathcal{F}}:f(\theta)=u\rbrace.\]

\subsection*{The basic interpolation theorem}
\begin{thm}
\label{thm:fund interp}Let $(A_{0},A_{1})$ and $(B_{0,}B_{1})$
be two Banach couples. If there exists a linear operator $T:A_{0}+A_{1}\to B_{0}+B_{1}$,
which is continuous as a map from $A_{j}$ to $B_{j}$ for $j=0,1$,
then, for each $\theta\in(0,1)$ and each $p\in[1,\infty]$, \begin{equation}
\|T\|_{(A_{0},A_{1})_{\theta,p}\to(B_{0},B_{1})_{\theta,p}}\le\|T\|_{A_{0}\to B_{0}}^{\theta}\|T\|_{A_{1}\to B_{1}}^{1-\theta}\label{eq:interp imbeddings}\end{equation}
and \begin{equation}
\|T\|_{[A_{0},A_{1}]_{\theta}\to[B_{0},B_{1}]_{\theta}}\le\|T\|_{A_{0}\to B_{0}}^{\theta}\|T\|_{A_{1}\to B_{1}}^{1-\theta}\,.\label{eq:cx-interp imbed}\end{equation}

\end{thm}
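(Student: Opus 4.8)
The plan is to establish the two displayed inequalities separately, in each case transferring a scaling structure from the domain couple to the target couple. Write $M_{0}=\|T\|_{A_{0}\to B_{0}}$ and $M_{1}=\|T\|_{A_{1}\to B_{1}}$. If either of these is infinite there is nothing to prove, and if either vanishes the desired inequality follows by replacing the vanishing norm by an arbitrarily small positive number in the argument below and letting it tend to $0$; so I may assume $0<M_{0},M_{1}<\infty$. For the real method the crucial step is a pointwise estimate on the $K$-functional. Fixing $a\in(A_{0},A_{1})_{\theta,p}$ and $t>0$, for any decomposition $a=a_{0}+a_{1}$ with $a_{j}\in A_{j}$ one has $Ta=Ta_{0}+Ta_{1}$ with $Ta_{j}\in B_{j}$, whence
\[
\|Ta_{0}\|_{B_{0}}+t\|Ta_{1}\|_{B_{1}}\le M_{0}\|a_{0}\|_{A_{0}}+tM_{1}\|a_{1}\|_{A_{1}}=M_{0}\left(\|a_{0}\|_{A_{0}}+\frac{M_{1}}{M_{0}}\,t\,\|a_{1}\|_{A_{1}}\right).
\]
Taking the infimum over all such decompositions yields $K(t,Ta;B_{0},B_{1})\le M_{0}\,K\left(\frac{M_{1}}{M_{0}}t,a;A_{0},A_{1}\right)$. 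Substituting this into the integral that defines $\|Ta\|_{(B_{0},B_{1})_{\theta,p}}$ — or, when $p=\infty$, into the supremum — and performing the change of variables $s=\frac{M_{1}}{M_{0}}t$, under which $dt/t=ds/s$, produces precisely the inequality (\ref{eq:interp imbeddings}).

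For the complex method I would argue directly at the level of the space $\mathcal{F}$. Fix $u\in[A_{0},A_{1}]_{\theta}$ and an admissible $f\in\mathcal{F}(A_{0},A_{1})$ with $f(\theta)=u$. Since $T$ is bounded and linear, $z\mapsto T(f(z))$ is again analytic on the open strip, continuous and bounded on its closure, and satisfies the boundary bounds $\|Tf(iy)\|_{B_{0}}\le M_{0}\|f(iy)\|_{A_{0}}$ and $\|Tf(1+iy)\|_{B_{1}}\le M_{1}\|f(1+iy)\|_{A_{1}}$, together with the corresponding decay as $|y|\to\infty$. To equalize the two boundary bounds one inserts a scalar factor: set $g(z):=\lambda^{z-\theta}\,T(f(z))$ with $\lambda:=M_{0}/M_{1}>0$, so that $|\lambda^{z-\theta}|=\lambda^{\operatorname{Re}z-\theta}$ on the strip. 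Then $g\in\mathcal{F}(B_{0},B_{1})$, $g(\theta)=T(f(\theta))=Tu$, and a short computation shows that $\|g\|_{\mathcal{F}(B_{0},B_{1})}$ is bounded by the right-hand side of (\ref{eq:cx-interp imbed}) multiplied by $\|f\|_{\mathcal{F}(A_{0},A_{1})}$. Passing to the infimum over all admissible $f$ gives (\ref{eq:cx-interp imbed}).

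I do not expect any genuine obstacle here; this is a classical argument. The only points calling for a little attention are the degenerate cases $M_{0}M_{1}=0$, handled by the limiting device mentioned above, and the routine verification that the auxiliary function $g$ in the complex case really belongs to $\mathcal{F}(B_{0},B_{1})$ — in particular that its restrictions to the lines $\operatorname{Re}z=0$ and $\operatorname{Re}z=1$ still vanish at infinity, which uses both the finiteness of $M_{0}$ and $M_{1}$ and the corresponding decay of $f$.
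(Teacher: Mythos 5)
Your argument is correct, and it is exactly the classical proof found in the sources the paper cites (Bergh--L\"ofstr\"om, Adams--Fournier); the paper itself gives no proof of Theorem \ref{thm:fund interp}, only those references, so there is no in-paper argument to compare against. One caveat: your $K$-functional estimate with the change of variables $s=\frac{M_{1}}{M_{0}}t$ (and likewise the factor $\lambda^{z-\theta}$ with $\lambda=M_{0}/M_{1}$ in the complex case) yields the bound $\|T\|\le\|T\|_{A_{0}\to B_{0}}^{1-\theta}\|T\|_{A_{1}\to B_{1}}^{\theta}$, which is the standard form and the one actually used later in the paper (e.g.\ in the proof of Theorem \ref{thm:main abstract}), whereas the exponents in the printed inequalities (\ref{eq:interp imbeddings}) and (\ref{eq:cx-interp imbed}) are swapped --- evidently a misprint in the statement --- so your claim that the computation gives ``precisely'' the displayed inequality should be read modulo that typo, not as a defect of your proof. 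The handling of the degenerate cases $M_{0}M_{1}=0$ and the verification that $g\in\mathcal{F}(B_{0},B_{1})$ are fine as sketched.
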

(See, e.g., \cite{Adams2} pp.~220--221, \cite{bl} pp.~40--41 and
p.~88.)

\subsection*{The special case $A_{0}=A_{1}$ and some special norm estimates}

In the case where $A_{0}=A_{1}$ isometrically we obtain that $\left[A_{0},A_{0}\right]_{\theta}=A_{0}$
for each $\theta\in(0,1)$, with \begin{equation}
\left\Vert a\right\Vert _{A_{0}}=\left\Vert a\right\Vert _{\left[A_{0},A_{0}\right]_{\theta}}\label{eq:aisom}\end{equation}
and $\left(A_{0},A_{0}\right)_{\theta,p}=A_{0}$ for each $\theta\in(0,1)$
and $p\in[1,\infty]$, with \begin{equation}
\left\Vert a\right\Vert _{\left(A_{0},A_{0}\right)_{\theta,p}}=c_{\theta,p}\left\Vert a\right\Vert _{A_{0}}\,.\label{eq:rmis}\end{equation}
where the constant $c_{\theta,p}$ is given by \[
c_{\theta,\infty}=1\ \mbox{and }c_{\theta,p}=\left(\frac{1}{\theta\left(1-\theta\right)p}\right)^{1/p}\,.\]
The proof of (\ref{eq:aisom}) is straightforward. For {}``$\le$''
one can use the Phragmen-Lindelof theorem for analytic $A_{0}$ valued
functions on the strip $0\le\theta\le1$. Then for {}``$\ge$''
one can use the function $f\in\mathcal{F}\left(A_{0},A_{0}\right)$
defined by $f(z)=e^{\delta(z-\theta)^{2}}a$ where $\delta$ is an
arbitrarily small positive number. The proof of (\ref{eq:rmis}) follows
immediately from the fact that $K(t,a;A_{0},A_{0})=\min\left\{ 1,t\right\} \left\Vert a\right\Vert _{A_{0}}$. 

We will need some standard estimates for the norms $\left\Vert a\right\Vert _{\left[A_{0},A_{1}\right]_{\theta}}$
and $\left\Vert a\right\Vert _{\left(A_{0},A_{1}\right)_{\theta,p}}$
in the case where $a\in A_{0}\cap A_{1}$. These are \begin{equation}
\left\Vert a\right\Vert _{\left[A_{0},A_{1}\right]_{\theta}}\le\left\Vert a\right\Vert _{A_{0}}^{1-\theta}\left\Vert a\right\Vert _{A_{1}}^{\theta}\label{eq:cmq}\end{equation}
 and \begin{equation}
\left\Vert a\right\Vert _{\left(A_{0},A_{1}\right)_{\theta,p}}\le c_{\theta,p}\left\Vert a\right\Vert _{A_{0}}^{1-\theta}\left\Vert a\right\Vert _{A_{1}}^{\theta}\,.\label{eq:drb}\end{equation}
We can obtain (\ref{eq:cmq}) from an easy exercise using the function
$f(z)=\frac{e^{\delta(z-\theta)^{2}}}{\left\Vert a\right\Vert _{A_{0}}^{1-z}\left\Vert a\right\Vert _{A_{1}}^{z}}a$
for arbitrarily small $\delta>0$ and an estimate very similar to
(\ref{eq:drb}) is implicit in pp.~49--50 of \cite{bl}. But we can
also prove both (\ref{eq:cmq}) and (\ref{eq:drb}) simultaneously,
as follows. Let $X$ be either $\left(A_{0},A_{1}\right)_{\theta,q}$
or $\left[A_{0},A_{1}\right]_{\theta}$ and consider the linear operator
$L:\mathbb{C}\to A_{0}\cap A_{1}$ defined by $Lz=za$ for each $z\in\mathbb{C}$.
Then $\left\Vert L\right\Vert _{\mathbb{C}\to A_{j}}=\left\Vert a\right\Vert _{A_{j}}$
for $j=0,1$ and so, by Theorem \ref{thm:fund interp} and (\ref{eq:aisom})
and (\ref{eq:rmis}), we have \[
\left\Vert a\right\Vert _{\left[A_{0},A_{1}\right]_{\theta}}=\left\Vert L1\right\Vert _{\left[A_{0},A_{1}\right]_{\theta}}\le\left\Vert a\right\Vert _{A_{0}}^{1-\theta}\left\Vert a\right\Vert _{A_{1}}^{\theta}\left\Vert 1\right\Vert _{[\mathbb{C},\mathbb{C}]_{\theta}}=\left\Vert a\right\Vert _{A_{0}}^{1-\theta}\left\Vert a\right\Vert _{A_{1}}^{\theta}\]
and \[
\left\Vert a\right\Vert _{\left(A_{0},A_{1}\right)_{\theta,p}}=\left\Vert L1\right\Vert _{\left(A_{0},A_{1}\right)_{\theta,p}}\le\left\Vert a\right\Vert _{A_{0}}^{1-\theta}\left\Vert a\right\Vert _{A_{1}}^{\theta}\left\Vert 1\right\Vert _{\left(\mathbb{C},\mathbb{C}\right)_{\theta,p}}=c_{\theta,p}\left\Vert a\right\Vert _{A_{0}}^{1-\theta}\left\Vert a\right\Vert _{A_{1}}^{\theta}\,.\]

\subsection*{Interpolation formulæ for $L^{p}$ spaces}

When applied to a couple of $L^{p}$ spaces on the same underlying
measure space, both the complex and the real methods (the latter for
a suitable choice of the second parameter) yield an $L^{p}$ space
with an intermediate exponent: 

\begin{equation}
\left(L^{p_{0}},L^{p_{1}}\right)_{\theta,p}=\left[L^{p_{0}},L^{p_{1}}\right]_{\theta}=L^{p}\mbox{ for all }1\le p_{0}<p_{1}\le\infty\mbox{ and }\theta\in(0,1),\label{eq:lorentz}\end{equation}
where $p$ is given by $\frac{1}{p}=\frac{1-\theta}{p_{0}}+\frac{\theta}{p_{1}}$.
(See e.g., \cite{Adams2} Corollary 7.27 p.~226 and Example 7.56
on pp.~249--250.)

\subsection*{Fractional Sobolev spaces and Besov spaces}

There are several equivalent definitions of 
{these}
spaces. For our
purposes here it will be convenient to define them via complex or
real interpolation of Sobolev spaces and $L^{p}$spaces.

Fractional Sobolev spaces can be equivalently defined (see e.g.~\cite{Adams2}
p.~250) by \begin{equation}
W^{s,p}(\mathbb{R}^{N})=[W^{m,p}(\mathbb{R}^{N}),L^{p}(\mathbb{R}^{N})]_{s/m},\; m\in\mathbb{N},p\in(1,\infty),0<s<m\,.\label{eq:fss}\end{equation}
Note that all choices of $m$ as above give the same space.

For each $s\in(0,\infty)$, $p\in(1,\infty)$ and $q\in[1,\infty]$
the Besov space $B^{s,p,q}(\mathbb{R}^{N})$ (see \cite{Triebel}
p.~186, see also p.~230 of \cite{Adams2} as well as pp.~139--145
of \cite{bl}) can be defined by the formula: 

\begin{equation}
B^{s,p,q}(\mathbb{R}^{N})=\left(W^{s_{0},p}(\mathbb{R}^{N}),W^{s_{1},p}(\mathbb{R}^{N})\right)_{\theta,q}\,,0\le s_{0}<s<s_{1}\mbox{ and }\theta=\frac{s-s_{0}}{s_{1}-s_{0}}\,.\label{eq:besovbi}\end{equation}
A commonly used version of this definition uses only integer values
of $s_{0}$ and $s_{1}$. Analogously to the previous definition,
all choices of $s_{0}$ and $s_{1}$ as above give the same space,
to within equivalence of norms. 

The Besov spaces satisfy the following continuous imbeddings (Jawerth
\cite{Jawerth}, see also \cite{Triebel} Theorem 2.8.1 p.~203): 

\begin{equation}
B^{s_{0},p_{0},q}(\mathbb{R}^{N})\subset B^{s,p,q}(\mathbb{R}^{N}),\quad1<p_{0}<p<\infty,\;1\le q\le\infty,\; s_{0}-s\ge n/p_{0}-n/p\,.\label{eq:be2}\end{equation}
These imbeddings can also be obtained from \cite{Adams2} Theorem
7.34 p.~231 by applying the reiteration formula for real interpolation
spaces.

The Besov spaces also admit the following continuous imbeddings into
$L^{p}$ spaces:

\begin{equation}
B^{\{s,p,q\}}(\mathbb{R}^{N})\subset L^{q}(\mathbb{R}^{N}),\: s>0,\;1<p<\infty,\: p\le q<p_{s}^{*}\label{eq:be:lp}\end{equation}

\section*{Appendix B: The iterated Brezis-Lieb lemma}

The following proposition evaluates the $L^{p}$-norms of sequences
given by sums of terms with asymptotically disjoint supports. Although
it and similar results have appeared elsewhere in literature, for
the reader's convenience we explicitly recall its proof, which is
an easy corollary of the well known Brezis-Lieb lemma \cite{Brezis-Lieb}.
\begin{prop}
\label{pro:IteratedBL}
Suppose that $1\le p<\infty$. Let $y_{k}^{(n)}$ be a point in $\mathbb{R}^{N}$
for each $k$ and $n$ in $\mathbb{N}$. Suppose that $\lim_{k\to\infty}|y_{k}^{(m)}-y_{k}^{(n)}|=+\infty$
for each fixed $m$ and $n$ with $m\neq n$. Let $u_{k}\in L^{p}(\mathbb{R}^{n})$
be a bounded sequence such that, for each $n\in\mathbb{N}$, the sequence
$u_{k}(\cdot+y_{k}^{(n)})$ converges weakly and almost everywhere
to a function which we will denote by $w^{(n)}$. Then, for every
$M\in\mathbb{N}$,

\begin{equation}
\int_{\mathbb{R}^{N}}|u_{k}|^{p}-\sum_{n=1}^{M}\int_{\mathbb{R}^{N}}|w^{(n)}|^{p}-\int_{\mathbb{R}^{N}}\left|u_{k}-\sum_{n=1}^{M}w^{(n)}(\cdot-y_{k}^{(n)})\right|^{p}\to0.\label{eq:iterBL}\end{equation}
\end{prop}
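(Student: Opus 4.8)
The plan is to argue by induction on $M$, the engine being the classical Brezis--Lieb lemma (if $f_k\to f$ almost everywhere and $\sup_k\|f_k\|_{L^p}<\infty$, then $\int_{\mathbb R^N}|f_k|^p-\int_{\mathbb R^N}|f_k-f|^p\to\int_{\mathbb R^N}|f|^p$), combined with one soft observation about translates that run off to infinity: if $h\in L^p(\mathbb R^N)$ and $|z_k|\to\infty$, then $h(\cdot+z_k)\to 0$ in $L^p$ on every bounded set, since $\int_{|x|<R}|h(x+z_k)|^p\,dx=\int_{|y-z_k|<R}|h(y)|^p\,dy\le\int_{|y|>|z_k|-R}|h(y)|^p\,dy\to0$ by absolute continuity of the integral. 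I also record that each $w^{(n)}$ belongs to $L^p(\mathbb R^N)$, by Fatou's lemma applied to the sequence $u_k(\cdot+y_k^{(n)})$. Finally, since the assertion is merely that a sequence of real numbers tends to $0$, it is legitimate to pass to subsequences: a numerical sequence converges to $0$ as soon as every subsequence has a further subsequence converging to $0$, and passing to subsequences costs nothing because the hypotheses of the proposition are inherited by subsequences.

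For the base case $M=1$, I would apply Brezis--Lieb to $f_k:=u_k(\cdot+y_k^{(1)})$, which converges to $w^{(1)}$ almost everywhere and is bounded in $L^p$; the change of variables $y=x+y_k^{(1)}$ then turns the conclusion into \eqref{eq:iterBL} for $M=1$. For the inductive step, assume \eqref{eq:iterBL} holds for $M-1$ and set $v_k:=u_k-\sum_{n=1}^{M-1}w^{(n)}(\cdot-y_k^{(n)})$, so that the induction hypothesis reads $\int_{\mathbb R^N}|v_k|^p=\int_{\mathbb R^N}|u_k|^p-\sum_{n=1}^{M-1}\int_{\mathbb R^N}|w^{(n)}|^p+o(1)$. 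Next I would look at $v_k(\cdot+y_k^{(M)})=u_k(\cdot+y_k^{(M)})-\sum_{n=1}^{M-1}w^{(n)}(\cdot+y_k^{(M)}-y_k^{(n)})$: the first term tends to $w^{(M)}$ almost everywhere by hypothesis, while each of the finitely many remaining terms tends to $0$ in $L^p$ on bounded sets by the soft observation above (here $|y_k^{(M)}-y_k^{(n)}|\to\infty$), hence, after one passage to a subsequence, almost everywhere. Along that subsequence $v_k(\cdot+y_k^{(M)})\to w^{(M)}$ almost everywhere, and $\{v_k\}$ is bounded in $L^p$ (it is a fixed finite combination of translates of $u_k$ and of the $w^{(n)}$), so Brezis--Lieb applies and, after undoing the translation, gives $\int_{\mathbb R^N}|v_k|^p-\int_{\mathbb R^N}|v_k-w^{(M)}(\cdot-y_k^{(M)})|^p\to\int_{\mathbb R^N}|w^{(M)}|^p$ along the subsequence. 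Since $v_k-w^{(M)}(\cdot-y_k^{(M)})=u_k-\sum_{n=1}^{M}w^{(n)}(\cdot-y_k^{(n)})$, substituting the induction hypothesis yields \eqref{eq:iterBL} for $M$ along the subsequence, and the subsequence device upgrades this to convergence of the full sequence.

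The step I expect to need the most care is precisely the one producing almost-everywhere convergence of $v_k(\cdot+y_k^{(M)})$ to $w^{(M)}$: one must handle the ``cross terms'' $w^{(n)}(\cdot+y_k^{(M)}-y_k^{(n)})$ that escape to infinity, and keep track of the (finitely many) passages to subsequences that are needed to turn their $L^p$-decay on bounded sets into decay almost everywhere. The rest is bookkeeping: the uniform $L^p$-bound on $\{v_k\}$, the membership $w^{(n)}\in L^p$, and the translation changes of variable.
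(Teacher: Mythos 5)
Your proof is correct and follows essentially the same route as the paper's: induction on $M$, with the classical Brezis--Lieb lemma applied to the shifted remainders $v_k(\cdot+y_k^{(M)})$, the only work being to show that the cross terms $w^{(n)}(\cdot+y_k^{(M)}-y_k^{(n)})$ vanish almost everywhere along a subsequence, and the subsequence principle to recover the full sequence. The paper arranges this a.e.\ vanishing by an upfront without-loss-of-generality passage to a subsequence with rapidly growing mutual distances, whereas you derive it from $L^p$-decay on bounded sets plus a (diagonal) subsequence extraction inside the inductive step; this is only a bookkeeping difference.
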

\begin{proof}

{It of course suffices to show that every subsequence
of the left side of (\ref{eq:iterBL}) has itself a subsequence which
tends to $0$ as $k\to\infty$. This means that we can assume, without
loss of generality, that the sequences $\left\{ y_{k}^{(n)}\right\} _{k\in\mathbb{N}}$
satisfy the additional condition $|y_{k+1}^{(m)}-y_{k+1}^{(n)}|>2|y_{k}^{(m)}-y_{k}^{(n)}|$
for each $k\in\mathbb{N}$ and each $m,n\in\left\{ 1,2,...,M\right\} $
with $m\ne n$. This guarantees that $\lim_{k\to\infty}f(x+y_{k}^{(m)}-y_{k}^{(n)})=0$
for a.e. $x\in\mathbb{R}^{N}$ for each $f\in L^{p}(\mathbb{R}^{N})$,
and, in particular, whenever $f$ is any one of the functions $w^{(j)}$,
$j\in\mathbb{N}$.}

We use induction. For $M=1$, the statement is immediate from the
Brezis-Lieb lemma for the sequence $u_{k}(\cdot+y_{k}^{(1)})$ whose
weak and a.e. limit is $w^{(1)}$. Assume (\ref{eq:iterBL}) is true
for 
{$M{=m}$}
and let us show that it is true for $M=m+1$. Let

\[
v_{k}^{(m)}=u_{k}-\sum_{n=1}^{m}w^{(n)}(\cdot-y_{k}^{(n)}).\]

Applying the Brezis-Lieb lemma to the sequence $v_{k}^{(m)}(\cdot+y_{k}^{(m+1)})$
whose weak and a.e. limit is $w^{(m+1)}$, we obtain from (\ref{eq:iterBL})
the following:

\[
0=\lim\left[\int_{\mathbb{R}^{N}}|u_{k}|^{p}-\sum_{n=1}^{m}\int_{\mathbb{R}^{N}}|w^{(n)}|^{p}-\int_{\mathbb{R}^{N}}\left|v_{k}^{(m)}(\cdot+y_{k}^{(n)})\right|^{p}\right]\]

\[
=\lim\left[\int_{\mathbb{R}^{N}}|u_{k}|^{p}-\sum_{n=1}^{m}\int_{\mathbb{R}^{N}}|w^{(n)}|^{p}-\int_{\mathbb{R}^{N}}|w^{(m+1)}|^{p}-\int_{\mathbb{R}^{N}}\left|v_{k}^{(m+1)}(\cdot+y_{k}^{(n)})\right|^{p}\right]\]

which immediately gives (\ref{eq:iterBL}) for $M=m+1$. \end{proof}

\end{document}